\documentclass{amsart}

\usepackage{amscd, amssymb, amsmath, amsthm}
\usepackage{array,caption}
\usepackage{mathabx} 
\usepackage{enumerate, latexsym, mathrsfs}
\usepackage{xypic}
\usepackage[
		bookmarks=true, bookmarksopen=true,%
    bookmarksdepth=3,bookmarksopenlevel=2,%
    colorlinks=true,%
    linkcolor=blue,%
    citecolor=blue]{hyperref}
\newtheorem{theorem}{Theorem}[section]
\newtheorem{lemma}[theorem]{Lemma}
\newtheorem{proposition}[theorem]{Proposition}
\newtheorem{corollary}[theorem]{Corollary}

\theoremstyle{definition}
\newtheorem{definition}[theorem]{Definition}

\newtheorem{notation}[theorem]{Notation}

\newtheorem{example}[theorem]{Example}

\newtheorem{remark}[theorem]{Remark}

\numberwithin{equation}{section}

\newcommand{\Real}{{\mathbb R}}
\newcommand{\Rational}{{\mathbb Q}}

\newcommand{\Integral}{{\mathbb Z}}

\title{
Barbell twists are natural}
     
\author[Yi Liu]{%
        Yi Liu} 
\address{%
				School of Mathematical Sciences/Beijing International Center for Mathematical Research, 
				Peking University\\
				Beijing 100871, China P.R.} 
\email{%
    liuyi@bicmr.pku.edu.cn}

\thanks{Partially supported by NSFC Grant 12525101, 
and National Key R\&D Program of China 2020YFA0712800}
\subjclass[2020]{Primary 57M99; Secondary 57k40,57R52}
\keywords{barbell diffeomorphism, smooth 4-manifold, mapping class group}

\date{%
 \today} 

\begin{document}

\begin{abstract}
For any oriented smooth $4$--manifold $X$
diffeomorphic to $(S^2\times D^2)^{\natural n}$ ($n\geq0$),
the author establishes a natural isomorphism of abelian groups:
$$\mathrm{Mod}(X,\partial X)\cong \mathrm{Mod}(D^4,\partial D^4)\times\wedge^2H_2(X;\mathbb{Z}),$$
concerning the (smooth) boundary-fixing mapping class group of $X$.
For $n=2$, 
the Budney--Gabai barbell twist $\varphi\in\mathrm{Mod}(\mathcal{N},\partial\mathcal{N})$
is identified with a generator of the factor subgroup $\wedge^2H_2(\mathcal{N};\mathbb{Z})\cong\mathbb{Z}$.
Up to boundary-fixing diffeotopy, the barbell spines of $\mathcal{N}$
are completely classified by the bases of $H_2(\mathcal{N};\mathbb{Z})\cong\mathbb{Z}^2$,
forming a homogeneous set modeled on the group
$\mathrm{GL}(H_2(\mathcal{N};\mathbb{Z}))\cong\mathrm{GL}(2,\mathbb{Z})$.
Any barbell spine of $\mathcal{N}$ 
gives rise to an implanted barbell twist equal to $\varphi$ or $\varphi^{-1}$ 
in $\mathrm{Mod}(\mathcal{N},\partial \mathcal{N})$, 
according to the sign of the homological basis orientation.
\end{abstract}

\maketitle

\section{Introduction}
The \emph{barbell diffeomorphism}, as introduced by Budney and Gabai \cite{BG_knotted},
has played a particularly useful role
in recent $4$--dimensional differential topology.
The barbell diffeomorphism 
is concretely constructed up to boundary-fixing diffeotopy,
as an orientation-preserving self-diffeomorphism
of a model manifold diffeomorphic to $(D^2\times S^2)^{\natural 2}$.
Thinking of the manifold as two copies of $D^2\times S^2$
connected up with a $1$--handle $D^1\times D^3$, 
a spine of the manifold takes the shape
of two $2$--spheres connected by an arc, 
which looks like a barbell in the real world.

The barbell diffeomorphism
shows certain similarity with the usual Dehn twist of an annulus.
The (right-hand) Dehn twist along a simple closed curve,
as a boundary-fixing mapping class,
is unambiguously defined upon fixing an orientation of the ambient surface.
The purpose of this paper is to provide 
a model-free characterization of the barbell diffeomorphism
in a similar fashion.
In view of its (behavioral) similarity with the annulus Dehn twist,
we refer to the boundary-fixing mapping class 
of any Budney--Gabai barbell diffeomorphism
as a \emph{barbell twist}.

In this introduction,
we first state our main theorem for a more general family of $4$--manifolds,
which are diffeomorphic to $(D^2\times S^2)^{\natural n}$ (Theorem \ref{main_T}).
We derive three corollaries 
with easy proofs right after their statements 
(Corollaries \ref{main_corollary_transformation}, \ref{main_corollary_product}, and \ref{main_corollary_BG}).
Then we focus on the case with $n=2$,
and explain our equivalent reformulation of the barbell twist.
We discuss implanted barbell twists 
arising from different barbell spines,
as a guiding example
to a complete classification of barbell spines
(Example \ref{example_spine}).

\begin{notation}\label{Mod_notation}
Let $X$ be any orientable, connected, compact, smooth 
manifold with (possibly empty) boundary.
\begin{enumerate}
\item 
The orientation-preserving self-diffeomorphism group of $X$ 
is denoted as $\mathrm{Diffeo}^+(X)$.
The normal subgroup $\mathrm{Diffeo}^+(X,\partial X)$ 
of $\mathrm{Diffeo}^+(X)$ consists of the orientation-preserving self-diffeomorphisms
fixing $\partial X$ pointwise. 
The normal subgroup $\mathrm{Diffeo}_0(X,\partial X)$ of $\mathrm{Diffeo}^+(X)$ 
consists of the self-diffeomorphisms that are diffeotopic 
to the identity fixing $\partial X$ constantly.
\item
Denote the (smooth) \emph{boundary-fixing mapping class group} of $X$ as
$$\mathrm{Mod}(X,\partial X)=\mathrm{Diffeo}^+(X,\partial X)/\mathrm{Diffeo}_0(X,\partial X),$$
or simply $\mathrm{Mod}(X)$, if $\partial X$ is empty.
\item
For any orientable, connected, compact, smooth manifold $Y$ of the same dimension,
and for any smooth embedding $i\colon X\to Y$, 
we denote the \emph{implantation homomorphism} as  
$$i_*\colon \mathrm{Mod}(X,\partial X)\to \mathrm{Mod}(Y,\partial Y).$$
This refers to a well-defined, unique group homomorphism,
such that for any $g\in\mathrm{Diffeo}^+(X,\partial X)$
which fixes some neighborhood of $\partial X$ in $X$, 
$i_*([g])\in\mathrm{Mod}(Y,\partial Y)$ is represented by
$i_*(g)\in\mathrm{Diffeo}^+(Y,\partial Y)$
as obtained from $i\circ g\circ i^{-1}$ on $i(X)$
by extending with the identity on $Y\setminus i(X)$.
\end{enumerate}
\end{notation}

\begin{theorem}\label{main_T}
Let $X$ be an oriented $4$--manifold diffeomorphic to $(S^2\times D^2)^{\natural n}$,
for some integer $n\geq0$.
Then, there exists a unique group homomorphism
$$T\colon \wedge^2 H_2(X;\Integral) \to \mathrm{Mod}(X,\partial X),$$
such that the following properties all hold.
\begin{itemize}
\item
Let $Y$ be any oriented, connected, compact $4$--manifold with possibly empty boundary,
and $i\colon X\to Y$ be any orientation-preserving smooth embedding.
For all $\alpha,\beta\in H_2(X;\Integral)$,
and for all $\eta\in H_2(Y,\partial Y;\Integral)$,
the following formula holds in $H_2(Y,\partial Y;\Integral)$.
$$\left(i_*(T_{\alpha\wedge\beta})\right)_*(\eta)=\eta + I(i_*(\alpha),\eta)\,i_*(\beta)- I(i_*(\beta),\eta)\,i_*(\alpha).$$
\item
Let $i\colon X\to D^4$ be any smooth embedding, 
such that $D^4\setminus i(\mathrm{int}(X))$ is diffeomorphic to $D^4\#(D^3\times S^1)^{\natural n}$.
For any $\omega\in \wedge^2 H_2(X;\Integral)$, $i_*(T_\omega)$ is trivial in $\mathrm{Mod}(D^4,\partial D^4)$.
\item
Let $j\colon D^4\to X$ be any smooth embedding.
For any $\phi\in\mathrm{Mod}(X,\partial X)$,
there exist some unique $\psi\in\mathrm{Mod}(D^4,\partial D^4)$ and some unique $\omega\in\wedge^2H_2(X;\Integral)$,
such that the following commutative factorization holds in $\mathrm{Mod}(X,\partial X)$.
$$\phi=j_*(\psi)\cdot T_\omega.$$
\end{itemize}
Here, 
$\wedge^2H_2(Y;\Integral)\cong\Integral^{n(n-1)/2}$ 
is viewed as an additive free abelian group; the notation
$I\colon H_2(Y;\Integral)\times H_2(Y,\partial Y;\Integral)\to \Integral$
denotes the algebraic intersection number pairing. 
\end{theorem}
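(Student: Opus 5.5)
We will build $T$ out of the Budney--Gabai barbell diffeomorphism and then compute $\mathrm{Mod}(X,\partial X)$ using its known structure. Choose a handle decomposition of $X$ with one $0$--handle and $n$ $2$--handles attached along an $n$--component $0$--framed unlink. The belt spheres of these handles give disjoint embedded spheres $S_1,\dots,S_n$ whose classes $e_1,\dots,e_n$ form a basis of $H_2(X;\Integral)$.

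\textbf{Step 1: the generators.} For each pair $k<l$, let $B_{kl}\subset X$ be a regular neighborhood of $S_k\cup S_l\cup\gamma_{kl}$, where $\gamma_{kl}$ is an arc joining the two spheres; $B_{kl}$ is a copy of the barbell model. Implanting the barbell twist gives a class $\tau_{kl}\in\mathrm{Mod}(X,\partial X)$. We set $T_{e_k\wedge e_l}=\tau_{kl}$ and extend additively. For this to define a homomorphism we need two facts.

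First, the $\tau_{kl}$ commute. Because the spines can be isotoped apart after sliding arcs, we will use the known fact that barbell twists with disjoint or nested spines commute; we expect this to reduce to the abelianness of the group computed in Step 3.

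Second, the definition must not depend on the choices of sphere representatives and arcs, and it must be $\mathrm{GL}$--equivariant. Concretely, a handle slide $e_k\mapsto e_k+e_m$ must transform the $\tau$'s like $\wedge^2$. The homological formula in the first bullet is a direct computation. The barbell twist acts by a "double Dehn twist" on spheres meeting the bells: a surface $\eta$ crossing $S_\alpha$ once gets tubed to a copy of $S_\beta$, with signs fixed by orientation. This gives $\eta\mapsto\eta+I(\alpha,\eta)\beta-I(\beta,\eta)\alpha$, and the formula is natural under $i_*$ and bilinear and alternating in $(\alpha,\beta)$.

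\textbf{Step 2: triviality in $D^4$.} For the second bullet, embed $X$ into $D^4$ with the stated complement. Then each $S_k$ bounds an embedded $3$--ball in $D^4$: in the standard embedding, the $S^2\times D^2$ summands sit as neighborhoods of unknotted spheres. Budney--Gabai show that a barbell twist whose bell spheres both bound balls is isotopic to the identity rel boundary; concretely, it unwinds as a point-pushing of the bar, and that pushing is trivial because $\pi_1$ of the complement is trivial. The hypothesis on the complement guarantees that every such embedding is standard up to diffeomorphism of $D^4$ rel boundary. That handles all of the generators, and hence all of the image of $T$.

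\textbf{Step 3: the factorization, and uniqueness.} This is the core, and we expect it to be the main obstacle. Given $\phi$, its action on $H_2$ and on the intersection form (which is zero) gives little directly. Instead, use the dual $3$--dimensional description: $X=\natural^n S^2\times D^2$ deformation retracts to $\vee S^2$, and it contains the cocores $D^2$ fibers. Compose $\phi$ with elements $T_\omega$ so that the image of each sphere $S_k$, or dually each cocore disk $\{pt\}\times D^2$ (a properly embedded disk with fixed boundary), becomes isotopic rel boundary to the original. The obstruction to isotoping $\phi(\text{cocore}_k)$ back lives, by Dax/Gabai light-bulb type results for properly embedded $2$--disks with a dual sphere, in an abelian group built from $\pi_1=1$ and $\pi_2=\Integral^n$. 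The relevant Dax invariant takes values in a group detected by intersection data with the other spheres, i.e.\ coefficients $\langle e_k,e_l\rangle$. By the formula of Step 1 these coefficients are exactly realized by $T_{e_k\wedge e_l}$, which fixes this obstruction.

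Once every cocore disk is fixed, cut $X$ along them. This leaves a $4$--ball, so $\phi\,T_{-\omega}$ comes from $j_*(\psi)$ for some $\psi$, since all embedded $4$--balls are isotopic. Uniqueness follows from two observations. The homological action of Step 1 detects $\omega$ completely: for $\eta$ the cocore dual to $e_k$, we get $\eta+\sum_l \omega_{kl}e_l$. And $j_*$ is injective, because composing with the embedding $X\subset D^4$ of Step 2 gives a left inverse on $j_*(\mathrm{Mod}(D^4,\partial D^4))$ (the composite $D^4\to D^4$ is standard). The same two observations give uniqueness of $T$, since the generators are pinned down modulo $j_*\mathrm{Mod}(D^4)$ by the first bullet and the remaining ambiguity is killed by the second. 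The delicate part is establishing that the disk-isotopy obstruction group has exactly rank $n(n-1)/2$ and nothing more; for this we would invoke Gabai's $4$D light bulb theorem and Dax invariant computations.
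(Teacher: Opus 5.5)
Your architecture matches the paper's: implant barbell twists to realize $\wedge^2H_2(X;\Integral)$, use $i_*\circ j_*=\mathrm{id}$ to split off the $D^4$--factor, and run a light bulb argument on the cocore disks $E_1,\dots,E_n$ to show that anything with trivial homological data is supported in a $4$--ball. Your uniqueness arguments are also fine. But Step 3, which you rightly call the crux, has a genuine gap.

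Since $\pi_1(X)=1$, the Dax group $\Integral[\pi_1(X)\setminus 1]/d_3(\pi_3(X))$ is zero. So by Kosanovi\'c--Teichner the \emph{only} obstruction to isotoping $\phi(E_k)$ back to $E_k$ rel boundary is its homotopy class rel boundary, i.e.\ $[\phi(E_k)\cup -E_k]\in\pi_2(X)\cong H_2(X;\Integral)\cong\Integral^n$. Over all $k$ this is an $n\times n$ integer matrix $(c_{kl})$, which is the variation $\Delta\phi\in H_2\otimes H_2$ in the paper's notation. Composing with $T_\omega$ only changes it by the antisymmetric matrix $\omega$, since $\Delta(\phi T_\omega)=\Delta\phi+\omega$.

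So you must know \emph{a priori} that $(c_{kl})$ is antisymmetric, zero diagonal included. No Dax or light bulb computation can supply this: those results only apply once the homotopy classes already agree, and the obstruction group really has rank $n^2$, not $n(n-1)/2$. The missing idea is homological. Extend $\phi$ by the identity to the double $W_X=X\cup_{\partial X}(-X)$, where $e_1,\dots,e_n,\hat E_1,\dots,\hat E_n$ ($\hat E_k$ the doubled cocores) form a basis with $I(e_k,e_l)=I(\hat E_k,\hat E_l)=0$ and $I(e_k,\hat E_l)=\delta_{kl}$. Then $\phi_*\hat E_k=\hat E_k+\sum_l c_{kl}e_l$, and invariance of $I(\phi_*\hat E_k,\phi_*\hat E_l)$ forces $c_{kl}+c_{lk}=0$. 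Only with this does $\phi\,T_{-\Delta\phi}$ send every $E_k$ to a disk homotopic, hence isotopic, to $E_k$.

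Two further steps are asserted rather than proved.

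\textbf{Step 2.} The claim that ``every such embedding is standard up to diffeomorphism of $D^4$ rel boundary'' is exactly the nontrivial input the paper gets from embedding uniqueness of $1$--handlebodies: homotopic pointed embeddings $(S^1\times D^3)^{\natural n}\to S^4$ are diffeotopic. Even granting it, you must control the induced self-diffeomorphism $g$ of $X$. Unless this is arranged, $g$ need not fix $\partial X$, and it acts on $\mathrm{Mod}(X,\partial X)$ by a conjugation you cannot yet compute. A cleaner route for your generator-based definition is to show directly that, for every such $i$, the bells of each $\Gamma_{kl}$ bound disjoint $3$--balls in $D^4\setminus i(\mathrm{int}(\mathcal{N}_{kl}))$, as the Budney--Gabai model requires.

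\textbf{Step 3.} Isotoping each $\phi(E_k)$ back one at a time does not fix them simultaneously. You need a several-disk light bulb theorem; the paper proves one by induction, splitting off a $D^2\times S^2$ summand using the geometric duals in $\partial X$. You then also need the upgrade from disks to $2$--handle neighborhoods.

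\textbf{Minor.} Belt spheres of $4$--dimensional $2$--handles are circles. The spheres you want are the cores capped off in the $0$--handle.
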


Theorem \ref{main_T} includes two trivial cases $D^4$ ($n=0$) and $D^2\times S^2$ ($n=1$).
The first listed property regarding the homomorphism $T$ may be called
the \emph{homological action formula for an implantation}, 
and the second called the \emph{triviality of unknotted implantations},
and the third called the \emph{factorization property}.
It is essentially the first two properties that determine $T$ uniquely,
and the third comes along as an extra feature.
Very little is known about the group $\mathrm{Mod}(D^4,\partial D^4)$.
For the proof of Theorem \ref{main_T}, see Section \ref{Sec-main_proof}.

\begin{corollary}[The transformation formula]\label{main_corollary_transformation}
Assume $(X,T)$ as declared in Theorem \ref{main_T}, 
and $(X',T')$ a pair with the similar properties.
Suppose that $f\colon X\to X'$ is a diffeomorphism.
Then, for all $\alpha,\beta\in H_2(X;\Integral)$,
$$f_*\left(T_{\alpha\wedge\beta}\right)=
T'_{\pm f_*(\alpha)\wedge f_*(\beta)}.$$
Here, the plus or minus sign in the formula is determined 
according as $f$ is orientation preserving or reversing, respectively. 
\end{corollary}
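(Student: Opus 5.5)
The plan is to deduce the formula from the uniqueness clause of Theorem \ref{main_T}. Write $f_*\colon\mathrm{Mod}(X,\partial X)\to\mathrm{Mod}(X',\partial X')$ for conjugation $[g]\mapsto[f\circ g\circ f^{-1}]$. This is a well-defined group isomorphism even when $f$ reverses orientation, since a conjugate of an orientation-preserving boundary-fixing diffeomorphism is again orientation-preserving and boundary-fixing, and diffeotopies conjugate to diffeotopies. Let $\epsilon=\pm1$ according to whether $f$ preserves or reverses orientation. Define
$$S\colon\wedge^2H_2(X';\Integral)\to\mathrm{Mod}(X',\partial X'),\qquad S_{\omega'}=f_*\bigl(T_{\epsilon\,(f_*^{-1})^{\wedge2}(\omega')}\bigr).$$
It is a homomorphism, being a composition of homomorphisms. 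The corollary is exactly the statement $S=T'$. So it suffices to check that $S$ satisfies the three listed properties with respect to the oriented manifold $X'$, and then invoke uniqueness.

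\emph{Homological action formula.} Let $i'\colon X'\to Y$ be an orientation-preserving embedding, and put $i=i'\circ f$, so that $i'_*(f_*(g))=i_*(g)$ in $\mathrm{Mod}(Y,\partial Y)$.
\begin{itemize}
\item If $\epsilon=+1$, then $i$ is orientation-preserving. The formula for $T$ with $\alpha=f_*^{-1}\alpha'$ and $\beta=f_*^{-1}\beta'$ is then literally the required formula for $S_{\alpha'\wedge\beta'}$, because $i_*(\alpha)=i'_*(\alpha')$ and $i_*(\beta)=i'_*(\beta')$.
\item If $\epsilon=-1$, regard $i$ as an orientation-preserving embedding into $\bar Y$, meaning $Y$ with the opposite orientation. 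The group $\mathrm{Mod}(\bar Y,\partial\bar Y)$ equals $\mathrm{Mod}(Y,\partial Y)$, and the induced action on $H_2(Y,\partial Y;\Integral)$ is unchanged. The intersection pairing changes sign, $I_{\bar Y}=-I_Y$. Applying the property of $T$ in $\bar Y$ to $-\alpha\wedge\beta=\beta\wedge\alpha$ therefore gives
$$\eta+I_{\bar Y}(i_*\beta,\eta)\,i_*\alpha-I_{\bar Y}(i_*\alpha,\eta)\,i_*\beta=\eta+I_Y(i'_*\alpha',\eta)\,i'_*\beta'-I_Y(i'_*\beta',\eta)\,i'_*\alpha',$$
which is the required formula.
\end{itemize}

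\emph{Triviality of unknotted implantations.} Let $i'\colon X'\to D^4$ have complement diffeomorphic to $D^4\#(D^3\times S^1)^{\natural n}$. Then $i=i'\circ f$, or $r\circ i'\circ f$ with $r$ a reflection of $D^4$ when $\epsilon=-1$, is an embedding $X\to D^4$ with diffeomorphic complement. Hence $i_*(T_\omega)$ is trivial. Also $i'_*(S_{\omega'})$ equals $i_*(T_\omega)$, or its conjugate by $r$, so it is trivial too.

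\emph{Factorization.} Given $j'\colon D^4\to X'$, set $j=f^{-1}\circ j'$. Take $\phi'\in\mathrm{Mod}(X',\partial X')$ and factor $f_*^{-1}\phi'=j_*(\psi)\cdot T_\omega$ as in Theorem \ref{main_T}. Applying $f_*$ gives $\phi'=j'_*(\psi)\cdot S_{\epsilon f_*\omega}$. Uniqueness transfers through the isomorphism $f_*$, because $j'_*(\psi)$ is $f_*(j_*(\psi))$ and $\omega\mapsto\epsilon(f_*)^{\wedge2}\omega$ is a bijection.

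With all three properties verified, uniqueness in Theorem \ref{main_T} forces $S=T'$. This reads $f_*(T_{\alpha\wedge\beta})=T'_{\epsilon f_*\alpha\wedge f_*\beta}$. The only delicate point is the orientation-reversing case, in particular the sign flip of the intersection pairing under reversing the orientation of $Y$. This is exactly what produces the minus sign, and I expect it to be the main thing to get right.
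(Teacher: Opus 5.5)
Your proposal is correct and is the paper's argument run in the opposite direction: the paper pulls $T'$ back to $\tilde{T}_\omega=f^{-1}_*(T'_{\pm f_*(\omega)})$ on $X$ and invokes uniqueness for $X$, while you push $T$ forward to $X'$ and invoke uniqueness there, actually carrying out the property checks (notably the sign $I_{\bar{Y}}=-I_Y$ in the orientation-reversing case) that the paper leaves as ``easy to check.'' The reflection $r$ in your unknotted-implantation step is harmless but unnecessary, since that property in Theorem \ref{main_T} is stated for any smooth embedding $X\to D^4$ with the given complement.
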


\begin{proof}
Construct another homomorphism $\tilde{T}\colon\wedge^2 H_2(X;\Integral)\to \mathrm{Mod}(X,\partial X)$
by assigning 
$$\omega\mapsto f^{-1}_*\left(T'_{\pm f_*(\omega)}\right).$$
It is easy to check that $\tilde{T}$ satisfies the same properties
as declared for $T$, with respect to $X$.
So, $\tilde{T}$ agrees with $T$ 
by the uniqueness of $T$ (Theorem \ref{main_T}).
The asserted formula follows immediately.
\end{proof}

The following consequence generalizes \cite[Theorem 5.7]{BG_knotted} ($n=2$) 
to all integers $n\geq0$,
and strengthens its conclusion with the naturality.

\begin{corollary}[{Natural splitting}]\label{main_corollary_product}
For any oriented smooth $4$--manifold $X$ diffeomorphic to $(S^2\times D^2)^{\natural n}$,
there exists a natural isomorphism of abelian groups as follows.
$$\mathrm{Mod}(X,\partial X)\cong\mathrm{Mod}(D^4,\partial D^4)\times\wedge^2H_2(X;\Integral).$$
Here, the naturality means that for any orientation-preserving diffeomorphism
$X\to X'$, the above isomorphism commutes with the induced isomorphisms
on both sides.
\end{corollary}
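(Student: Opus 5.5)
We obtain the corollary directly from Theorem \ref{main_T}, with the third property supplying the bijection and the uniqueness of $T$ supplying the naturality. The steps, in order, are as follows.

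\textbf{The map and its bijectivity.} Fix a smooth embedding $j\colon D^4\to X$ and define
$$\Phi\colon \mathrm{Mod}(D^4,\partial D^4)\times\wedge^2H_2(X;\Integral)\to\mathrm{Mod}(X,\partial X),\qquad (\psi,\omega)\mapsto j_*(\psi)\cdot T_\omega .$$
The factorization property says exactly that $\Phi$ is a bijection of sets.

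\textbf{The group structure.} For $\Phi$ to be a homomorphism of groups, we need three facts.
\begin{enumerate}
\item $\mathrm{Mod}(D^4,\partial D^4)$ is abelian. This is the standard Eckmann--Hilton argument: boundary-fixing diffeomorphisms supported in disjoint half-balls commute, and any class can be isotoped into either half.
\item $j_*$ and $T$ are homomorphisms. This is given.
\item $j_*(\psi)$ commutes with $T_\omega$. We isotope $j(D^4)$ into a small ball near $\partial X$ that is disjoint from the support of a representative of $T_\omega$. The implanted class $j_*(\psi)$ does not depend on the choice of $j$, since any two orientation-preserving embeddings of $D^4$ into the connected manifold $X$ are isotopic (disc theorem). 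If $j$ reverses orientation, we compose with a reflection. The two diffeomorphisms then have disjoint supports, so they commute.
\end{enumerate}
It follows that $\Phi$ is a homomorphism, hence an isomorphism, and that $\mathrm{Mod}(X,\partial X)$ is abelian. The independence of $j$ also makes the isomorphism canonical. Injectivity of $j_*$ is not needed separately, because it already follows from the uniqueness clause of the factorization property.

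\textbf{Naturality.} Let $f\colon X\to X'$ be an orientation-preserving diffeomorphism. On the $\wedge^2 H_2$ factor, Corollary \ref{main_corollary_transformation} gives $f_*(T_\omega)=T'_{f_*\omega}$. On the disc factor, $f_*(j_*\psi)=(f\circ j)_*\psi$, and by the independence of the embedding just established, this equals $j'_*\psi$ for the embedding $j'$ chosen for $X'$. Hence $f_*\circ\Phi=\Phi'\circ(\mathrm{id}\times\wedge^2f_*)$, which is the required naturality.

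The main subtlety is the independence of $j_*$ from $j$ together with the disjoint-support commutation. Both reduce to the isotopy uniqueness of embedded balls; everything else is formal.
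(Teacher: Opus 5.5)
Your proposal is correct and follows essentially the paper's route. The direct product comes from the unique factorization in Theorem~\ref{main_T}, your disjoint-support commutation of $j_*(\psi)$ with $T_\omega$ is the centrality argument of Lemma~\ref{X_in_D}, independence of $j_*$ over orientation-preserving $j$ gives canonicity, and Corollary~\ref{main_corollary_transformation} gives naturality.

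The one point to state carefully is that $j$ must be fixed orientation-preserving. Precomposing with a reflection $r$ changes $j_*$ by the automorphism $\psi\mapsto r\psi r^{-1}$ of $\mathrm{Mod}(D^4,\partial D^4)$, which is not known to be trivial. So your reflection remark serves only the commutation step, not the canonicity or naturality claims.
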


\begin{proof}
A direct product decomposition is determined by 
the unique commutative factorization property in Theorem \ref{main_T}.
Note that the implantation homomorphism $j_*$
does not depend on the choice of any orientation-preserving smooth embedding $j\colon D^4\to X$.
The naturality follows from the transformation formula (Corollary \ref{main_corollary_transformation}.
\end{proof}

\begin{remark}\label{main_corollary_product_remark}
For any oriented topological $4$--manifold $X$ homeomorphic to $(S^2\times D^2)^{\natural n}$,
the topological boundary-fixing mapping class group $\pi_0(\mathrm{Homeo}^+(X,\partial X))$
is naturally isomorphic to $\wedge^2 H_2(X;\Integral)$.
In fact, Orson and Powell determine the topological boundary-fixing mapping class group
for all simply connected, compact, topological $4$--manifolds,
implying the above $X$ as a special case \cite[Theorem A]{OP_mcg}.
\end{remark}

Informally, a \emph{barbell} $\Gamma=P_a\cup I\cup P_b$ comprises 
an ordered pair of oriented $2$--spheres, called the \emph{head bell} $P_a$ and the \emph{tail bell} $P_b$,
and an arc, called the \emph{bar} $I$.
A \emph{barbell spine} $\Gamma$ sits in a $4$--manifold $\mathcal{N}$, 
suggesting a boundary-connect sum decomposition of $\mathcal{N}$ as
$(D^2\times P_a)\natural(D^2\times P_b)$ along a cut $3$--disk dual to $I$.
See Definitions \ref{barbell_def} and \ref{barbell_spine_def} for details.
For a review of the Budney--Gabai model, see Section \ref{Sec-BG}.

\begin{corollary}[Identification with the Budney--Gabai model]\label{main_corollary_BG}
	Let $\mathcal{N}$ be a model thickened barbell
	with the model barbell spine $\Gamma=P_a\cup I\cup P_b$.
	The barbell twist arising from the Budney--Gabai model
	is recognized as
	$$\varphi=T_{[P_a]\wedge[P_b]}$$
	in $\mathrm{Mod}(\mathcal{N},\partial\mathcal{N})$.
	See Definition \ref{BG_barbell_twist_def}.
\end{corollary}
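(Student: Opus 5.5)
By the factorization property in Theorem \ref{main_T}, with $n=2$ and $\wedge^2H_2(\mathcal{N};\Integral)\cong\Integral$ generated by $[P_a]\wedge[P_b]$, we can write uniquely
$$\varphi=j_*(\psi)\cdot T_{k[P_a]\wedge[P_b]}$$
for some $\psi\in\mathrm{Mod}(D^4,\partial D^4)$ and some $k\in\Integral$. The plan is to show $k=1$ and $\psi=1$. Equivalently, the homomorphism $\omega\mapsto T_\omega$ with $T_{[P_a]\wedge[P_b]}$ replaced by $\varphi$ should satisfy the two characterizing properties in Theorem \ref{main_T}. Concretely, I would check the homological action formula and the triviality of unknotted implantations for $\varphi$ itself.

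\textbf{Determining $k$.} I would use the homological action formula, applied with $Y=\mathcal{N}$ and $i$ the identity. The Budney--Gabai model lets us compute $\varphi_*$ on $H_2(\mathcal{N},\partial\mathcal{N};\Integral)\cong\Integral^2$. This group is generated by the dual disks $D_a=D^2\times\{pt\}$ and $D_b$, with $I([P_a],D_a)=I([P_b],D_b)=1$ for suitable orientations. The barbell diffeomorphism is obtained by dragging the cocore disk of one bell around the other sphere, i.e.\ by a family of spinnings. From the explicit description reviewed in Section \ref{Sec-BG}, one reads off
$$\varphi_*(D_a)=D_a-[P_b],\qquad \varphi_*(D_b)=D_b+[P_a].$$
The formula for $T_{k[P_a]\wedge[P_b]}$ gives exactly these images multiplied by $k$, while $j_*(\psi)$ acts trivially on homology since it is supported in a ball. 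Hence $k=\pm1$, and $k=1$ once the orientation and sign conventions of Definition \ref{BG_barbell_twist_def} are matched. Getting the sign right is a bookkeeping issue with the convention for the head versus tail bell.

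\textbf{Showing $\psi=1$.} This is the main obstacle, since $\mathrm{Mod}(D^4,\partial D^4)$ is essentially unknown and cannot be detected by homology. I would implant $\mathcal{N}$ into $D^4$ by a standard unknotted embedding $i$, so that the complement is $D^4\#(D^3\times S^1)^{\natural 2}$. The composite $i\circ j\colon D^4\to D^4$ makes $(i\circ j)_*$ the identity on $\mathrm{Mod}(D^4,\partial D^4)$, by the usual Cerf--Palais disk argument. Combining this with the second property of Theorem \ref{main_T} gives
$$i_*(\varphi)=i_*j_*(\psi)=\psi.$$
It therefore suffices to show that the barbell diffeomorphism becomes isotopic to the identity after an unknotted implantation into $D^4$. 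For this I would use Budney--Gabai's own observation that the barbell twist is the boundary of a family of embedded arcs or spheres. Once the bells bound disjoint embedded $3$-balls in $D^4$ (which holds in the unknotted embedding), the barbell twist factors as a product of twists that each extend over a ball neighborhood. Each such twist is therefore isotopic to the identity rel boundary; alternatively, one may note that the implanted barbell is the boundary-parallel ``trivial barbell''.

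To carry this out, I would exhibit the implanted $\varphi$ as a composition of two commuting point-pushing-type diffeomorphisms. Each is supported in the regular neighborhood of a bell union a bounding $3$-ball, and the unknottedness lets one cancel them via an explicit isotopy. Verifying this isotopy concretely within the Budney--Gabai model is where I expect most of the work.
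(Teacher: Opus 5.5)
Your overall architecture is the paper's: write $\varphi=j_*(\psi)\cdot T_{k[P_a]\wedge[P_b]}$, kill $\psi$ with an unknotted implantation into $D^4$, and pin down $k$ homologically. But the step determining $k$ fails as written.

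With $Y=\mathcal{N}$ and $i=\mathrm{id}$, the classes $[P_a],[P_b]$ are zero in $H_2(\mathcal{N},\partial\mathcal{N};\Integral)$. The reason is that $H_2(\partial\mathcal{N};\Integral)\to H_2(\mathcal{N};\Integral)$ is onto (the bells isotope into $\partial\mathcal{N}$), so $H_2(\mathcal{N};\Integral)\to H_2(\mathcal{N},\partial\mathcal{N};\Integral)$ is the zero map. Hence every element of $\mathrm{Mod}(\mathcal{N},\partial\mathcal{N})$ acts as the identity on $H_2(\mathcal{N},\partial\mathcal{N};\Integral)$. Your equation $\varphi_*(D_a)=D_a-[P_b]$ just reads $D_a=D_a$, and the formula for $T_{k[P_a]\wedge[P_b]}$ is equally vacuous, so you cannot even conclude $k=\pm1$.

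You need an implantation in which the bells survive in $H_2(Y,\partial Y;\Integral)$. The paper uses the double $W_{\mathcal{N}}=\mathcal{N}\cup_{\partial\mathcal{N}}(-\mathcal{N})$, where $i_*[P_a],i_*[P_b],[\hat E_a],[\hat E_b]$ form a basis. It compares the formula of Theorem \ref{main_T} with Corollary \ref{BG_construction_corollary}(2). Evaluating at $[\hat E_a]$ gives $[\hat E_a]+k\,i_*[P_b]=[\hat E_a]+i_*[P_b]$, so $k=1$; here $j_*(\psi)$ acts trivially because it is supported in a ball. Comparing variations instead ($\Delta\varphi=[P_a]\wedge[P_b]$ from Example \ref{BG_Delta_f_example} against $\Delta T_\omega=\omega$) also works, but extracting $\Delta T_\omega$ from the stated properties again goes through the double.

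Your signs are also reversed. The model has $g(E_a)$ tubed with $+P_b$ and $g(E_b)$ tubed with $-P_a$ (Theorem \ref{BG_construction}). With the signs you wrote, your own comparison would force $k=-1$, i.e.\ $\varphi=T_{-[P_a]\wedge[P_b]}$. Distinguishing $\varphi$ from $\varphi^{-1}$ is exactly what the statement asserts, so this is not mere bookkeeping.

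For $\psi$, your reduction $i_*(\varphi)=i_*j_*(\psi)=\psi$ is correct. The proposed factorization into commuting point-pushing twists, however, is unsubstantiated and unnecessary, because the needed fact is built into the model. By Theorem \ref{BG_construction}, a representative $g$ of $\varphi$ is the restriction of some $h\in\mathrm{Diffeo}_0(V,\partial V)$. Here $V\cong D^4$ is $\mathcal{N}$ with the arc neighborhoods $Q_a\times[-1,1]$ and $Q_b\times[-1,1]$ added. The inclusion of $\mathcal{N}$ into a slight enlargement of $V$ is an unknotted embedding, and $i_*(\varphi)=[h]=1$ (Corollary \ref{BG_construction_corollary}(1)). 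Since the second property of Theorem \ref{main_T} and $i_*j_*=\mathrm{id}$ hold for every such $i$, this single embedding suffices, and $\psi=1$ follows at once.
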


\begin{proof}
	Since $\mathcal{N}$ is diffeomorphic to $(S^2\times D^2)^{\natural 2}$,
	the ordered pair of oriented $2$--spheres $(P_a,P_b)$ determines
	a generator $[P_a]\wedge[P_b]$ of $\wedge^2H_2(\mathcal{N};\Integral)\cong\Integral$.
	
	According to a description of the Budney--Gabai model (see Theorem \ref{BG_construction}),
	the implantation of $\varphi$ in $\mathrm{Mod}(D^4,\partial D^4)$ is trivial
	for some smooth embedding $\mathcal{N}\to D^4$ with complement closure 
	diffeomorphic to $D^4\#(D^3\times S^1)^{\natural 2}$
	(see Corollary \ref{BG_construction_corollary}).
	The same implantation also 
	annihilates the $\wedge^2H_2(M;\Integral)$ factor of $\varphi$
	in its unique commutative factorization (Theorem \ref{main_T}).
	It follows that the $\mathrm{Mod}(D^4,\partial D^4)$ factor of $\varphi$ must be trivial.	
	We infer
	$$\varphi=T_{m\cdot[P_a]\wedge[P_b]},$$
	for some $m\in\Integral$.
	
	With respect to the canonical inclusion $\mathcal{N}\to W_{\mathcal{N}}$
	of $\mathcal{N}$ into the oriented doubling 
	$W_{\mathcal{N}}=\mathcal{N}\cup_{\partial\mathcal{N}}(-\mathcal{N})$
	(diffeomorphic to $(S^2\times S^2)^{\natural 2}$),
	the homological action formula for the implantation of $T_{[P_a]\wedge[P_b]}$ (Theorem \ref{main_T})
	agrees with 
	that of the implantation of $\varphi$ on $H_2(W_{\mathcal{N}};\Integral)$ (see Corollary \ref{BG_construction_corollary}).
	This implies $m=1$. Therefore, $\varphi$ agrees with $T_{[P_a]\wedge[P_b]}$
	in $\mathrm{Mod}(\mathcal{N},\partial\mathcal{N})$.
\end{proof}

In view of Corollary \ref{main_corollary_BG},
for	any oriented $4$--manifold $\mathcal{N}$ diffeomorphic to $(S^2\times D^2)^{\natural 2}$,
we can refer to the boundary fixing mapping class
\begin{equation}\label{T_omega_notation}
T_\omega\in\mathrm{Mod}(\mathcal{N},\partial \mathcal{N})
\end{equation}
as the (positive) \emph{barbell twist},
upon fixing a generator $\omega$ of $\wedge^2H_2(\mathcal{N};\Integral)\cong\Integral$.

A practical advantage of the reformulation (\ref{T_omega_notation}) is that it allows us
to declare a barbell twist purely in terms of 
its behavior under implantations
(compare Theorem \ref{BG_construction} and Definition \ref{BG_barbell_twist_def}).

\begin{remark}[Dependence on the orientation]\label{main_corollary_BG_remark}
	If we flip the orientation of $\mathcal{N}$ and 
	simultaneously the sign of $\omega$,
	the resulting barbell twist does not change
	in $\mathrm{Mod}(\mathcal{N},\partial \mathcal{N})$ (Corollary \ref{main_corollary_transformation}).
	Therefore, without designating an orientation of $\mathcal{N}$,
	we could speak of the (positive) barbell twist of $\mathcal{N}$,
	upon fixing a generator of 
	$H_4(\mathcal{N},\partial \mathcal{N};\Integral)\otimes\wedge^2H_2(\mathcal{N};\Integral)\cong\Integral$.
	This makes the barbell twist a bit more like
	the (positive, $2$--dimensional) annulus twist,
	which is naturally defined in $\mathrm{Mod}(\mathcal{A},\partial \mathcal{A})$ for an annulus $\mathcal{A}$,
	upon fixing a generator of $H_2(\mathcal{A},\partial \mathcal{A};\Integral)\cong\Integral$.
\end{remark}

For any (legally embedded) barbell $\Gamma=P_a\cup I\cup P_b$ in an oriented, connected, compact
$4$--manifold $X$ (Definition \ref{barbell_def}), 
we can take 
any (unparametrized, compatibly oriented) smooth $4$--manifold neighborhood $\mathcal{N}\subset X$
of $\Gamma$, diffeomorphic to $(S^2\times D^2)^{\natural 2}$ 
and containing $\Gamma$ as a barbell spine (Definition \ref{barbell_spine_def}).
The $\partial X$--fixing (and $\Gamma$--fixing) diffeotopy class of 
$\mathcal{N}$ depends only on $\Gamma$.
This gives rise to a well-defined \emph{implanted barbell twist}
\begin{equation}\label{T_Gamma_notation}
T_\Gamma\in \mathrm{Mod}(X,\partial X),
\end{equation}
depending only on $(X,\Gamma)$,
as the implantation of $T_{[P_a]\wedge[P_b]}\in\mathrm{Mod}(\mathcal{N},\partial \mathcal{N})$
via the inclusion $\mathcal{N}\to X$.
The reformulation (\ref{T_Gamma_notation}) agrees with the similar notion introduced by Budney and Gabai 
in terms of a particular model \cite[Definition 5.11]{BG_knotted}.

An implanted barbell twist tends to be more invariant than the barbell itself,
as the following example illustrates.

\begin{example}[Different barbell spines]\label{example_spine}
	Let $\mathcal{N}$ 
	be an oriented $4$--manifold diffeomorphic to $(S^2\times D^2)^{\natural 2}$.
	
	Pick an ordered pair of smooth embedded, mutually disjoint, oriented $2$--spheres 
	$$S_a,S_b\subset\partial\mathcal{N},$$
	such that $[S_a],[S_b]$ form a basis 
	of the free abelian group 
	$H_2(\partial\mathcal{N};\Integral)\cong H_2(\mathcal{N};\Integral)\cong\Integral^2$.
	In this case, 
	$S_a\sqcup S_b$ necessarily has connected complement in $\partial\mathcal{N}$.
	Pick a smoothly embedded arc 
	$$J\subset\partial \mathcal{N},$$
	connecting $S_a$ and $S_b$,
	such that $J$ intersects $S_a\sqcup S_b$ transversely and only at the endpoints $\partial J$.
	Obtain a parallel copy $P_a\cup I\cup P_b$ of the barbell $S_a\cup J\cup S_b$ 
	in a parallel copy of $\partial\mathcal{N}$ (in a collar neighborhood in $\mathcal{N}$),
	denoted as
	$$\Gamma\subset\mathrm{int}(\mathcal{N}).$$
	
	It can be shown that 
	any $\Gamma$ arising from the above construction forms a barbell spine of $\mathcal{N}$
	(see Corollary \ref{characterization_spine}).
	Its implanted barbell twist agrees with the barbell twist 
	of $\mathcal{N}$ with respect to the generator $[S_a]\wedge[S_b]$ of $\wedge^2H_2(\mathcal{N};\Integral)\cong\Integral$.
	Namely,
	$$T_\Gamma=T_{[S_a]\wedge[S_b]}$$
	holds in $\mathrm{Mod}(\mathcal{N},\partial\mathcal{N})$,
	according to our model-free reformulations
	(\ref{T_omega_notation}) and (\ref{T_Gamma_notation}).
	In particular, $T_\Gamma$ depends only on $[S_a]\wedge[S_b]$.
	
	On the other hand, 
	for different choices of the initial objects $(S_a,S_b,J)$,
	in general,	the resulting barbell spines 
	$\Gamma=P_a\cup I\cup P_b$ are different
	up to boundary-fixing diffeotopy of $\mathcal{N}$.
	For example, the basis $[S_a],[S_b]$ of $H_2(\mathcal{N};\Integral)$
	serves as an obvious homological invariant 
	for distinguishing some different pairs.
\end{example}
	
In Section \ref{Sec-classification_spine}, 
we provide a complete classification of barbell spines
up to boundary-fixing diffeotopy (Theorem \ref{classification_spine}). 
In terms of Example \ref{example_spine},
the classification says that barbell spines of $\mathcal{N}$ 
all arise as constructed in Example \ref{example_spine},
and their boundary-fixing diffeotopy classes correspond bijectively
to the bases of $H_2(\mathcal{N};\Integral)\cong\Integral^2$,
which form a homogeneous set modeled on $\mathrm{GL}(H_2(\mathcal{N};\Integral))\cong\mathrm{GL}(2,\Integral)$ 
(see Corollaries \ref{classification_spine_GL} and \ref{characterization_spine}).

To conclude,
between differential topological objects associated to
an oriented smooth $4$--manifold $\mathcal{N}$ diffeomorphic to $(S^2\times D^2)^{\natural 2}$,
and linear objects associated to $H_2(\mathcal{N};\Integral)$ isomorphic to $\Integral^2$,
there is a natural and classifying analogy, namely,
$$\mbox{barbell spine}:\mbox{barbell twist}::\mbox{homological basis}:\mbox{homological orientation}.$$

\subsection*{Ingredients}
Our proof of the main theorem (Theorem \ref{main_T}) involves three ingredients, as listed below.

\begin{itemize}
\item \emph{The extended homological action}.
For any compact $4$--manifold $X$,
the usual homological action of boundary-fixing homeomorphisms
on $H_2(X;\Integral)$ admits a natural refinement,
as a group homomorphism 
$\mathrm{Homeo}(X,\partial X)\to 
\mathrm{Hom}\left(H_2(X,\partial X;\Integral),H_2(X;\Integral)\right)\rtimes\mathrm{Aut}(H_2(X;\Integral))$.
If $X$ is oriented, smooth, and diffeomorphic to $(D^2\times S^2)^{\natural n}$,
the homomorphism simplifies to be
$$\mathrm{Mod}(X,\partial X)\to \wedge^2H_2(X;\Integral).$$
We revisit relevant facts, and derive a homological action formula for implantations
in terms of the simplified homomorphism (Corollary \ref{implanted_homological}).
The formula is used in the proof of Theorem \ref{main_T}.
See Section \ref{Sec-eha}.
\item \emph{Embedding uniqueness for a $1$--handlebody}.
A $4$--dimensional \emph{$1$--handlebody} (of genus $n$) refers 
to any smooth $4$--manifold $Z$ diffeomorphic to $(S^1\times D^3)^{\natural n}$.
For any orientable, connected, compact, smooth $4$--manifold $X$,
we prove that homotopic smooth embeddings $Z\to \mathrm{int}(X)$ are
diffeotopic relative to $\partial X$.
We also prove a pointed version of the similar statement (Theorem \ref{uniqueness_1hdlbdy}).
The pointed version is used in the proof of Theorem \ref{main_T}.
See Section \ref{Sec-1hdlbdy}.
\item \emph{The light bulb theorem for several disks}.
Generalizing the (single) disk version
of the light bulb theorem due to Konosivi\'c and Teichner \cite{KT_lbt},
we establish the several disks version (Theorem \ref{lbt_disks}).
As a corollary, we show that 
boundary $\pi_1$--nonbusting smooth embeddings of
several $2$--handles 
$$\left(\sqcup^n(D^2\times D^2),\sqcup^n(S^1\times D^2)\right)\to(X,\partial X)$$
into a simply connected, compact, smooth $4$--manifold $X$
are diffeotopic relative to $\partial X$, if and only if
they are homotopic relative to $\sqcup^n(S^1\times D^2)$ (Corollary \ref{uniqueness_bnb_2hdls}). 
The corollary is used in the proof of Theorem \ref{main_T}.
See Section \ref{Sec-2hdls}.
\end{itemize}

Results from the above ingredients have been formulated in greater generality
than it is needed for proving Theorem \ref{main_T}.
To develop them as a toolbox is a hidden motivation of this paper.

We outline our proof of Theorem \ref{main_T} as follows.
Let $X$ be an oriented smooth $4$--manifold diffeomorphic to $(S^2\times D^2)^{\natural n}$.
The essential work is to construct a canonical group isomorphism
$$\mathrm{Mod}(X,\partial X)\to\mathrm{Mod}(D^4,\partial D^2)\times \wedge^2H_2(X;\Integral),$$
whose inverse restricted to the $\wedge^2H_2(X;\Integral)$ will be the asserted group homomorphism $T$.
The construction of a group homomorphism of the above form
involves an arbitrarily chosen unknotted smooth embedding $X\to D^4$,
and makes use of the first ingredient (the extended homological action).
The surjectivity is proved by implanting a collection of model barbell twists.
The injectivity is proved by using the third ingredient (the light bulb theorem for several disks).
The second ingredient (embedding uniqueness of a $1$--handlebody) 
accounts for the independence of the construction on the choice of $X\to D^4$.
See Section \ref{Sec-main_proof} for details.

Our classification of barbell spines is proved without using the above ingredients.
We refer the reader directly to Section \ref{Sec-classification_spine}.
The exposition of Section \ref{Sec-classification_spine} relies only on 
Definitions \ref{barbell_def} and \ref{barbell_spine_def} 
in Section \ref{Sec-BG}.

\subsection*{Terminology}
For a smooth map $f\colon X\to Y$ 
between compact, smooth manifolds with boundary,
we say that $f$ is \emph{neat} if $\partial X$ is equal to $f^{-1}(\partial Y)$,
and if $f$ is transverse to $\partial Y$ in $Y$.
This term appears in \cite{KT_lbt}.
In the literature, a neat map is also called a ``proper'' smooth map,
in conflict with a similar term in general topology, 
which means ``every compact set has compact preimage'' therein.
A homotopy or a smooth isotopy \emph{relative to} a subspace
is assumed to fix the subspace constantly, as usual in the literature.
A \emph{diffeotopy} is synonymous to 
an ``ambient smooth isotopy''.
For other standard terminology in differential topology,
we refer to Wall's textbook \cite{Wall_book}.

\subsection*{Organization}
In Section \ref{Sec-BG}, we review the Budney--Gabai model of a barbell twist.
In Section \ref{Sec-eha}, we discuss the extended homological action, 
and derive a homological action formula for implantations (Corollary \ref{implanted_homological}).
In Section \ref{Sec-1hdls}, 
we prove a preliminary theorem regarding embedding uniqueness of several $1$--handles (Theorem \ref{uniqueness_1hdls}),
to be used for proving Theorem \ref{uniqueness_1hdlbdy}.
In Section \ref{Sec-1hdlbdy}, we prove the embedding uniqueness for a $1$--handlebody (Theorem \ref{uniqueness_1hdlbdy}.
In Section \ref{Sec-2hdls}, we prove the light bulb theorem for several disks (Theorem \ref{lbt_disks}),
and derive a simplified criterion regarding embedding uniqueness of several $2$--handles (Corollary \ref{uniqueness_bnb_2hdls}.
In Section \ref{Sec-main_proof}, we prove the main theorem (Theorem \ref{main_T}).
In Section \ref{Sec-classification_spine},
we characterize and classify barbell spines up to boundary-fixing diffeotopy,
in any smooth $4$--manifold diffeomorphic to $(S^2\times D^2)^{\natural 2}$.

\section{The Budney--Gabai model of a barbell twist}\label{Sec-BG}
In this section, we review the Budney--Gabai model of a barbell twist,
based on \cite[Section 5]{BG_knotted}.
A barbell twist described this way is called 
a \emph{model barbell twist} (Definition \ref{BG_barbell_twist_def}).
We also define the terms 
\emph{legally embedded barbell} (Definition \ref{barbell_def}) 
and \emph{barbell spine} (Definition \ref{barbell_spine_def})
for our discussion.

\begin{definition}\label{barbell_def}
	For any smooth $4$--manifold $X$,
	a \emph{legally embedded barbell} (or simply, a \emph{barbell}) 
	in $X$ refers to a triple 
	$$(P_a,P_b,I),$$ as follows.
	The items $P_a,P_b\subset X$ are smoothly embedded, mutually disjoint, oriented $2$--spheres
	with trivializable normal vector bundles in $X$; the item $I\subset X$ is a smoothly embedded arc
	connecting $P_a$ and $P_b$, and intersects $P_a$ and $P_b$ 
	only in $\partial I$ and along normal directions.
	We call $P_a$ the \emph{head bell}, and $P_b$ the \emph{tail bell},
	and $I$ the \emph{bar}.
	We often adopt an informal notation 
	$$\Gamma=P_a\cup I\cup P_b,$$
	keeping in mind that the bells are ordered and oriented.
\end{definition}

\begin{definition}\label{barbell_spine_def}
	For any smooth $4$--manifold $\mathcal{N}$ is diffeomorphic to  $(S^2\times D^2)^{\natural2}$,
	a legally embedded barbell $\Gamma=P_a\cup I\cup P_b$ in $\mathrm{int}(\mathcal{N})$ is said to 
	form a \emph{barbell spine} of $\mathcal{N}$,
	if there exists some neatly embeddded $3$--disk in $\mathcal{N}$,
	intersecting $\Gamma$	transversely at a unique point in $\mathrm{int}(I)$, 
	and witnessing a boundary-connect sum decomposition
	of $\mathcal{N}$ into summands of the form $P_a\times D^2$ and $P_b\times D^2$.
	With respect to a barbell spine $\Gamma$, 
	we say that $\mathcal{N}$ is a \emph{thickened barbell}.
\end{definition}

\begin{theorem}[Budney--Gabai]\label{BG_construction}
	There exists some tuple 
	$$\left(\mathcal{N},\Gamma,\varphi\right)$$ 
	as follows.
	\begin{itemize}
	\item 
	In the tuple, the item 
	$$\mathcal{N}\subset \Real^4$$ is 
	a smoothly embedded, oriented $4$--manifold diffeomorphic to $(S^2\times D^2)^{\natural 2}$;
	the item 
	$$\Gamma=P_a\cup I\cup P_b$$
	is a barbell spine of $\mathcal{N}$;
	the item 
	$$\varphi\in\mathrm{Mod}(\mathcal{N},\partial \mathcal{N})$$ 
	is a boundary-fixing mapping class;
	see Definitions \ref{barbell_def} and \ref{barbell_spine_def},
	and Notation \ref{Mod_notation}.
	Moreover, the tuple satisfies the following declared properties.
	\item
	There exist some neatly embedded, mutually disjoint $3$--disks 
	$$(Q_a,\partial Q_a),(Q_b,\partial Q_b)\subset(\Real^4\setminus \mathrm{int}(\mathcal{N}),\partial \mathcal{N})$$
	and some tubular neighborhoods of the form
	$Q_a\times[-1,1]$ and $Q_b\times[-1,1]$
	in $\Real^4\setminus\mathrm{int}(\mathcal{N})$,
	such that the region 
	$$V=\mathcal{N}\cup (Q_a\times [-1,1])\cup (Q_b\times [-1,1])$$ 
	in $\Real^4$ is diffeomorphic to $D^4$ after corner smoothing 
	along $\partial Q_a\times\{-1,1\}$ and $\partial Q_b\times\{-1,1\}$,
	and	such that $\partial Q_a\sqcup\partial Q_b$ 
	is smoothly isotopic to $P_a\sqcup P_b$ in $\mathcal{N}$, 
	respecting the labeling of components.
	\item
	There exist some neatly embedded, mutually disjoint, 
	oriented $2$--disks 
	$$(E_a,\partial E_a),(E_b,\partial E_b)\subset (\mathcal{N},\partial \mathcal{N}),$$ 
	such that $\partial E_{a/b}$
	intersects $P_a\cup P_b$ transversely 
	at a unique, positive intersection point in $P_{a/b}$.
	Here, the subscript $a/b$ means $a$ or $b$ respectively.
	\item
	There exists some 
	$$h\in\mathrm{Diffeo}_0(V,\partial V),$$
	such that $h$ 
	fixes some neighborhood of $\partial \mathcal{N}\cup(Q_a\times[-1,1])\cup(Q_b\times[-1,1])$,
	and such that 
	$$g=h|_{\mathcal{N}}$$
	is a representative of $\varphi$ in $\mathrm{Diffeo}^+(\mathcal{N},\partial \mathcal{N})$.
	Moreover, the neatly embedded, oriented $2$--disk $g(E_a)$ in $\mathcal{N}$
	takes the form as an oriented tubing connect sum in $\mathcal{N}$
	of $E_a$ with a parallel nearby copy of $P_b$,
	up to smooth isotopy in $\mathcal{N}$ relative to $\partial E_a$;
	similarly, $g(E_b)$ is smoothly isotopic to $E_b$ tubed 
	with an orientation-reversed copy of $P_a$,
	relative to $\partial E_b$.
	\end{itemize}
\end{theorem}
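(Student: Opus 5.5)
The plan is to recover the tuple from the Budney--Gabai construction in \cite[Section 5]{BG_knotted}, presenting the model explicitly enough to read off each listed property.

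\emph{The model pair.} Start from the linking configuration in $\Real^4$. Take a standard unknotted $2$--sphere $A\subset\Real^4$ and a small round circle $\gamma$ linking it once. Take a second, far-away copy of such a configuration. The model thickened barbell $\mathcal{N}$ is the complement in $D^4$ (or in a large ball of $\Real^4$) of open tubular neighborhoods of two unknotted, unlinked $2$--spheres (or, dually, of properly embedded $3$--disks). Concretely, I would set $\mathcal{N}=V\setminus\mathrm{int}\big((Q_a\sqcup Q_b)\times(-1,1)\big)$, where $V\cong D^4$ is a round ball and $Q_a,Q_b\subset V$ are disjoint, neatly embedded, standard flat $3$--disks. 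Removing a thickened neat $3$--disk from $D^4$ leaves $S^2\times D^2$. Removing two disjoint parallel ones leaves $(S^2\times D^2)\natural(S^2\times D^2)$, with the connecting region between the two cuts serving as the $1$--handle. The spheres $P_a$ and $P_b$ are pushed-in copies of $\partial Q_a$ and $\partial Q_b$, and the bar $I$ is an arc between them crossing the middle slab. A $3$--disk separating the two slabs then witnesses the barbell spine decomposition, as in Definition~\ref{barbell_spine_def}. This gives the first two bullets, including the isotopy of $\partial Q_a\sqcup\partial Q_b$ to $P_a\sqcup P_b$. The disks $E_{a/b}$ are taken as meridian disks $\{pt\}\times D^2$ of the respective $S^2\times D^2$ summands, oriented so that each boundary meets its bell positively once.

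\emph{The diffeomorphism $h$.} Following Budney--Gabai, I would realize $h$ as the endpoint of a loop of embeddings. Move the slab $Q_a\times[-1,1]$ around a loop $\theta$ in the space of embeddings of a $3$--disk (or of the $2$--sphere $\partial Q_a$) in $V\setminus(Q_b\times[-1,1])$. The loop should sweep the sphere $\partial Q_a$ once around $P_b$ (equivalently, spin the bar around the tail bell) and then return, with the loop nullhomotopic in $V$ since $V$ is a ball. Isotopy extension along this loop gives an ambient isotopy $h_t$ of $V$ rel $\partial V$, starting at the identity. Its time-one map $h$ sets $Q_a\times[-1,1]$ back in place. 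After a correction near $Q_a$ by a small standard isotopy, and choosing the loop supported away from $Q_b\times[-1,1]$, $h$ fixes a neighborhood of $\partial\mathcal{N}\cup(Q_a\cup Q_b)\times[-1,1]$. Hence $h\in\mathrm{Diffeo}_0(V,\partial V)$ and $g=h|_{\mathcal{N}}$ fixes $\partial\mathcal{N}$. This gives the third bullet, with $\varphi=[g]$.

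\emph{Effect on $E_a,E_b$.} Track $E_a$ during the isotopy. Its boundary circle is dragged along with the moving sphere. Tracking it through the motion, the disk picks up a tube to a parallel copy of $P_b$, since the motion sweeps across $P_b$ exactly once. The orientation is fixed by the sense of the loop. Symmetrically, by the antisymmetry of the barbell motion (the swept configuration is symmetric under exchanging the roles of the bells and reversing orientation), $E_b$ picks up an orientation-reversed copy of $P_a$. The main obstacle is this last step: verifying precisely, with signs, the tubing description of $g(E_a)$ and $g(E_b)$ up to isotopy rel boundary. I would do it via the movie or handle-picture computation of \cite[Section 5]{BG_knotted}, following the path traced by the boundary circle of $E_a$ and reading off the swept annulus. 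The sign for $E_b$ I would deduce from the intersection-form constraint: $g$ must preserve the intersection pairing between $H_2(\mathcal{N})$ and $H_2(\mathcal{N},\partial\mathcal{N})$, which forces opposite signs.
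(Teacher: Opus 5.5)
\paragraph{Main gap: the model is wrong.} Your strategy is the paper's. The paper gives no independent argument: it identifies $(V,Q_a\times[-1,1],Q_b\times[-1,1])$ with Budney--Gabai's $(W,N(\alpha_1),N(\alpha_2))$ from \cite[Definition 5.1 and Construction 5.3]{BG_knotted}, and obtains $h$ by isotopy extension of a ``rope skipping'' motion (Remark \ref{BG_construction_remark}). The explicit model you write down is not that one, and it fails.

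A standard flat neat $3$--disk $Q\subset V\cong D^4$ is a hyperplane section. So $V\setminus\mathrm{int}(Q\times(-1,1))$ is two $4$--balls, not $S^2\times D^2$, and removing two parallel slabs leaves three balls. Your alternative description, the complement of two unknotted, unlinked $2$--spheres, gives $D^4\#(S^1\times D^3)^{\natural 2}$, which is the complementary piece rather than $\mathcal N$.

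The statement requires $(Q_a,\partial Q_a)$ to be neat in $(\Real^4\setminus\mathrm{int}(\mathcal N),\partial\mathcal N)$. Hence $Q_a\times[-1,1]$ meets $\mathcal N$ along $\partial Q_a\times[-1,1]$ and meets $\partial V$ only in $Q_a\times\{\pm1\}$. It is therefore a $1$--handle in $V$, namely $N(\alpha_1)$ for a neat arc $\alpha_1$, with $Q_a$ a normal $3$--disk and $\partial Q_a$ a meridian $2$--sphere of $\alpha_1$. The correct model is $\mathcal N=V\setminus\mathrm{int}\,N(\alpha_1\sqcup\alpha_2)$ for two unknotted, unlinked neat arcs; the complement of one such arc in $D^4$ is $S^2\times D^2$. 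The bells $P_{a/b}$ are pushed-in meridian spheres.

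\paragraph{Consequences for the rest of your argument.} The motion should be a loop of embeddings of the $1$--handle $N(\alpha_1)$ in $V\setminus N(\alpha_2)$, fixed near its ends, whose middle portion sweeps out the meridian sphere $P_b$ of $\alpha_2$. It is not a motion of the sphere $\partial Q_a$ around $P_b$.

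This picture also supplies the mechanism for $E_b$, which your appeal to ``antisymmetry of the motion'' does not (only the $a$--handle moves). Because $P_b$ meets $E_b$ once, the moving handle crosses $E_b$ once. Isotopy extension pushes $E_b$ ahead of it, tubing $E_b$ to a meridian sphere of $\alpha_1$, i.e.\ to $\pm P_a$.

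Your sign argument fails as stated. The diffeomorphism $g$ acts as the identity on both $H_2(\mathcal N;\Integral)$ and $H_2(\mathcal N,\partial\mathcal N;\Integral)$, since the classes $[P_{a/b}]$ vanish in the latter. So the pairing between these groups is preserved for either sign. What forces the opposite sign is the intersection form of the double $W_{\mathcal N}$, namely $I(g_*\hat E_a,g_*\hat E_b)=I(\hat E_a,\hat E_b)$. This is exactly the skew-symmetry of the variation in Proposition \ref{variation_tensor}.
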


\begin{remark}\label{BG_construction_remark}
	Theorem \ref{BG_construction} is a descriptive summary 
	of Budney and Gabai's model in \cite[Definition 5.1 and Construction 5.3]{BG_knotted}.
	To describe their construction informally, 
	we can drag a middle portion of the $1$--handle $[-1,1]\times Q_a$
	in the $4$--disk $V$,
	sweep out the $2$--sphere $P_b$ therein,
	and move on to the original position,
	just like rope skipping.
	This makes a film of a boundary-fixing diffeotopy of $V$
	ending up with some $h\in\mathrm{Diffeo}_0(V,\partial V)$,
	which yields a representative $g=h|_{\mathcal{N}}$ of $\varphi$
	as declared in Theorem \ref{BG_construction}.
	Our objects $(V,Q_a\times[-1,1],Q_b\times[-1,1])$
	correspond to Budney and Gabai's objects $(W,N(\alpha_1),N(\alpha_2))$
	in \cite[Definition 5.1 and Construction 5.3]{BG_knotted},
	and $(P_a,P_b,E_a,E_b)$ correpsond to their $(P_1,P_2,E_1,E_2)$.
	Our objects $(X,g)$ correspond to their objects $(\mathcal{NB},\beta)$,
	which are called the \emph{thick barbell} and 
	the \emph{barbell diffeomorphism} therein.	
	To make it clear, both our $g$ and their $\beta$ 
	are only specified up to relative diffeotopy
	(rather than, say, up to conjugacy of
	boundary-fixing diffeomorphisms that are relative diffeotopically trivial).
	See Section \cite[Section 5]{BG_knotted}
	for details and for further discussion.
\end{remark}

\begin{corollary}\label{BG_construction_corollary}
	Assume $(\mathcal{N},P_a,P_b,I,\varphi)$ as declared in Theorem \ref{BG_construction}.
	\begin{enumerate}
	\item There exists some embedding $i\colon\mathcal{N}\to D^4$ 
	with $D^4\setminus i(\mathrm{int}(\mathcal{N}))$ diffeomorphic to $D^4\# (D^3\times S^1)^{\natural 2}$, 
	such that $i_*(\varphi)$ is trivial in $\mathrm{Mod}(D^4,\partial D^4)$.
	\item Let $W_{\mathcal{N}}=\mathcal{N}\cup_{\partial \mathcal{N}}(-\mathcal{N})$ be the oriented doubling of $\mathcal{N}$ along $\partial \mathcal{N}$,
	and $i\colon\mathcal{N}\to W_{\mathcal{N}}$ be the canonical inclusion.
	For any $\eta\in H_2(W_{\mathcal{N}};\Integral)$,
	$$(i_*(\varphi))(\eta)=\eta+ I(i_*([P_a]),\eta)\, i_*([P_b]) -I(i_*([P_b]),\eta)\, i_*([P_a]).$$
	\end{enumerate}
\end{corollary}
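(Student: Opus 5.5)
Both items follow from the data supplied by Theorem \ref{BG_construction}.

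For item (1), take $i$ to be the inclusion $\mathcal{N}\subset V$, where $V\cong D^4$ after corner smoothing. The complement $V\setminus\mathrm{int}(\mathcal{N})$ is $(Q_a\times[-1,1])\sqcup(Q_b\times[-1,1])$, two $3$--disks thickened. It is not $D^4\#(D^3\times S^1)^{\natural 2}$, so this inclusion is not yet the required embedding, and I modify it as follows.

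\begin{itemize}
\item Embed $V$ into a larger $4$--disk $D$, namely $V$ with an exterior collar $\partial V\times[0,1]$ attached.
\item Take the embedding $\mathcal{N}\to D$ to be the composite.
\item Its complement is $(\partial V\times[0,1])\cup(Q_a\times[-1,1])\cup(Q_b\times[-1,1])$. This is a collar on $S^3$ with two $1$--handles attached along its inner boundary component, so it is diffeomorphic to $(S^3\times[0,1])\natural(S^1\times D^3)^{\natural2}$, i.e.\ $D^4\#(D^3\times S^1)^{\natural2}$ after rearranging.
\end{itemize}

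By Theorem \ref{BG_construction}, $h\in\mathrm{Diffeo}_0(V,\partial V)$ fixes a neighborhood of $\partial\mathcal{N}$ and of the handles, and $h|_{\mathcal{N}}=g$ represents $\varphi$. Extend $h$ by the identity over the collar. Then $i_*(\varphi)$ is represented by this extension, which is diffeotopic to the identity rel $\partial D$. Hence $i_*(\varphi)$ is trivial. The step needing care is that $h$ is isotopic to the identity rel $\partial V$ through diffeomorphisms that need not fix the handles. This suffices, since triviality is only required in $\mathrm{Mod}(D^4,\partial D^4)$. Implantation into $D^4$ does not depend on the embedding of $D^4$ into itself, so $D$ may be identified with $D^4$.

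For item (2), compute $H_2(W_{\mathcal{N}})\cong\Integral^4$, with basis $i_*[P_a]$, $i_*[P_b]$ and the doubles $\Delta_a=E_a\cup\bar E_a$, $\Delta_b=E_b\cup\bar E_b$ of the disks from the second item of Theorem \ref{BG_construction}. The pairing has $I(P_{a},\Delta_{a})=1$, $I(P_a,\Delta_b)=0$, and so on, up to orientation conventions.

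\begin{itemize}
\item The implantation $i_*(g)$ is supported in $\mathcal{N}$. It fixes $[P_a]$ and $[P_b]$, since these classes come from $H_2(\partial\mathcal{N})$ by pushing $P_{a},P_b$ into the boundary; equivalently, $g_*=\mathrm{id}$ on $H_2(\mathcal{N})$ because $g$ fixes $\partial\mathcal{N}$ and the inclusion $\partial\mathcal{N}\to\mathcal{N}$ is an isomorphism on $H_2$.
\item Apply the last item of Theorem \ref{BG_construction}: $g(E_a)\simeq E_a\#P_b$ and $g(E_b)\simeq E_b\#(-P_a)$. This gives $i_*(\varphi)(\Delta_a)=\Delta_a+[P_b]$ and $i_*(\varphi)(\Delta_b)=\Delta_b-[P_a]$.
\end{itemize}

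Compare with the formula. For $\eta=\Delta_a$ it gives $\eta+I(P_a,\Delta_a)P_b-I(P_b,\Delta_a)P_a=\Delta_a+P_b$, and similarly for $\Delta_b$. On $P_a$ and $P_b$ all intersections vanish, so both sides fix them. Both sides are linear in $\eta$, hence they agree.

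The main obstacle is sign bookkeeping: checking that the positivity of $\partial E_{a}\cap P_a$ in $\partial\mathcal{N}$ translates to $I(i_*[P_a],\Delta_a)=+1$ in $W_{\mathcal{N}}$, given that $-\mathcal{N}$ has the reversed orientation. I would verify this by pushing $P_a$ slightly into $\mathcal{N}$ near the boundary point, so that the only intersection with $\Delta_a$ is with $E_a$, where the local sign is the given positive one.
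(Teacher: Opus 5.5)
Your proposal is correct and follows essentially the same route as the paper. For (1), the paper likewise takes $i$ to be the inclusion of $\mathcal{N}$ into a slightly larger neighborhood of $V$ in $\Real^4$ (your collar) and concludes from $h\in\mathrm{Diffeo}_0(V,\partial V)$; for (2), it checks the formula on the same basis $i_*([P_a]),i_*([P_b]),[\hat{E}_a],[\hat{E}_b]$ of $H_2(W_{\mathcal{N}};\Integral)$, and your version just fills in details the paper leaves implicit, such as identifying the complement as a collar on $S^3$ with two $1$--handles attached and explaining why $[P_a],[P_b]$ are fixed.
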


\begin{proof}
	With the notations in Theorem \ref{BG_construction},
	the first statement follows by
	taking $i\colon \mathcal{N}\to D^4$ to be 
	the inclusion of $\mathcal{N}$ into 
	slightly larger neighborhood of the smooth $4$--disk $V$ in $\Real^4$.
	Note that $i_*(\varphi)\in\mathrm{Mod}(D^4,\partial D^4)$ is trivial
	because $h$ already lies in $\mathrm{Diffeo}_0(V,\partial V)$.
	The second statement follows by checking the formula
	on the basis $i_*([P_a]),i_*([P_b]),[\hat{E}_a],[\hat{E}_b]$
	of $H_2(W_{\mathcal{N}};\Integral)\cong\Integral^4$,
	where $\hat{E}_{a/b}\subset W_{\mathcal{N}}$ denotes
	the $2$--sphere obtained by doubling $E_{a/b}$ along $\partial E_{a/b}$.
	Note that $W_{\mathcal{N}}$ is diffeomorphic to $(S^2\times S^2)^{\# 2}$.	
\end{proof}

\begin{definition}\label{BG_barbell_twist_def}
	Let $(\mathcal{N},\Gamma,\varphi)=(\mathcal{N},P_a,P_b,I,\varphi)$ 
	be any tuple as declared in Theorem \ref{BG_construction},
	which we refer to as a \emph{model tuple}.
	We call the boundary-fixing mapping class 
	$$\varphi\in\mathrm{Mod}(\mathcal{N},\partial \mathcal{N})$$ 
	the \emph{model barbell twist} (or the \emph{Budney--Gabai twist}) of 
	the \emph{model thickened barbell} $\mathcal{N}$,
	with the respect to the \emph{model barbell spine} $\Gamma$.
\end{definition}

\begin{remark}\label{BG_barbell_twist_def_remark}
	For any model tuples $(\mathcal{N},\Gamma,\varphi)$ and $(\mathcal{N}',\Gamma',\varphi')$,
	it is easy to construct an orientation-preserving diffeomorphism $f\colon\mathcal{N}\to \mathcal{N}'$,
	transforming $\Gamma=(P_a,P_b,I)$ into $\Gamma'=(P'_a,P'_b,I')$.
	If we assume Theorem \ref{main_T},
	we will infer $f_*(\varphi)=\varphi'$ 
	from Corollaries \ref{main_corollary_transformation} and \ref{main_corollary_BG}.
	Therefore, all model tuples will turn out to be equivalent up to orientation-preserving diffeomorphisms,
	and in this sense, the same as 
	the particular one constructed by Budney and Gabai \cite[Construction 4.5]{BG_knotted}.
\end{remark}

\section{The extended homological action}\label{Sec-eha}
In this section, we establish
a general (middle-dimensional, integral) homological action formula 
for implantations (Corollary \ref{implanted_homological}).
For any oriented, connected, compact, smooth $2m$--manifold $X$
with $H_m(X;\Integral)$ torsion-free and 
with $H_m(X;\Integral)\to H_m(X,\partial X;\Integral)$ of finite image,
the formula computes the induced linear automorphic action
of $i_*(f)$ on $H_m(Y,\partial Y;\Integral)$ for any $[f]\in\mathrm{Mod}(X,\partial X)$
implanted via $i\colon X\to Y$.
This provides a unified treatment for 
the annulus Dehn twist (Example \ref{Dehn_example})
and the Budney--Gabai barbell twist (Example \ref{BG_Delta_f_example}).

Our formula involves a term $\Delta f$ called the variation of $f$,
which becomes a tensor in $H_m(X;\Integral)\otimes H_m(X;\Integral)$
under our assumption.
This tensor is symmetric or alternating, depending on the parity of $m$.
We refer to \cite{DK_periodicity,Kauffman_periodicity,Lamotke,Saeki_stable_mcg}
for some original developments of the variation,
and to \cite[Section 2.2]{OP_mcg} for a quick review.
In what follows, we supply a self-contained treatment of relevant facts, 
for the reader's convenience, and derive our formula.

For any continuous self-map $f\colon X\to X$
of a topological space $X$, and for any subspace $A\subset X$ fixed by $f$,
there is a well-defined abelian group homomorphism
\begin{equation}\label{Delta_f_notation}
\Delta f\colon H_k(X,A)\to H_k(X),
\end{equation}
called the \emph{variation} of $f$ with respect to $(X,A)$, as follows.

We allow $H_k$ to be the (singular) homology for any dimension $k$,
allowing any abelian group coefficients.
First obtain the homomorphism 
$f_\sharp-\mathrm{id}\colon C_k(X)\to C_k(X)$
of the the singular $k$--chain group $C_k(X)$.
This descends to a homomorphism $C_k(X,A)\to C_k(X)$
of the singular relative $k$--chain group $C_k(X,A)=C_k(X)/C_k(A)$,
as $f$ fixes $A$. 
The homomorphism $C_k(X,A)\to C_k(X)$ commutes obviously 
with the boundary operators of the complexes $C_*(X,A)$ and $C_*(X)$,
inducing a well-defined homomorphism $\colon H_k(X,A)\to H_k(X)$,
denoted as $\Delta f$.

From the above construction, the variation of $f$ takes the form
\begin{equation}\label{Delta_f_def}
\Delta f\colon [z]\mapsto [f_{\sharp}(z)-z],
\end{equation}
representing any $[z]\in H_k(X,A)$ with some $z\in C_k(X)$ with $\partial z\in C_{k-1}(A)$.

\begin{proposition}\label{variation_properties}
	Assume $(X,A,f)$ as in (\ref{Delta_f_notation}).
	\begin{enumerate}
	\item
	The homomorphism $\Delta f\colon H_k(X,A)\to H_k(X)$ depends only on
	the homotopy class of $f\colon X\to X$ relative to $A$.
	\item 
	The following composite homomorphism is equal to $f_*-\mathrm{id}$.
	$$\xymatrix{
	H_k(X) \ar[r]^-{\mathrm{incl}_*} & H_k(X,A) \ar[r]^-{\Delta f} & H_k(X)
	}$$
	\item 
	If $(X',A')\subset (X,A)$ is a subpair with $f(X')\subset X'$, then
	the following diagram of abelian group homomorphisms commutes.
	$$\xymatrix{
	H_k(X',A') \ar[r]^-{\Delta f'} \ar[d]_{\mathrm{incl_*}} & H_k(X') \ar[d]^{\mathrm{incl}_*} \\
	H_k(X,A) \ar[r]^-{\Delta f} & H_k(X)
	}$$
	Here, $f'\colon X'\to X'$ denotes the restricted continuous self-map of $X'$ fixing $A'$.
	\end{enumerate}	
\end{proposition}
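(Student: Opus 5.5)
The plan is to argue at the chain level, using the defining formula (\ref{Delta_f_def}). I will treat the three items in the order (2), (3), (1), since the first two are immediate and the third carries the only real content.

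For (2), take a class in $H_k(X)$ represented by a cycle $z\in C_k(X)$. Its image under $\mathrm{incl}_*$ is represented by the same $z$, which trivially satisfies $\partial z=0\in C_{k-1}(A)$. Then $\Delta f$ sends it to $[f_\sharp(z)-z]=f_*[z]-[z]$.

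For (3), a relative cycle $z\in C_k(X')$ with $\partial z\in C_{k-1}(A')$ is also a relative cycle of $(X,A)$ after pushing forward by the inclusion. Since $f_\sharp$ restricts to $f'_\sharp$ on $C_k(X')$, both paths around the square yield the class of $f_\sharp(z)-z$ in $H_k(X)$. Here one should check that the restriction $f'$ is well defined as a self-map of the pair $(X',A')$ fixing $A'$, which holds because $f(X')\subset X'$ and $f$ fixes $A\supset A'$.

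For (1), let $F\colon X\times[0,1]\to X$ be a homotopy from $f$ to $g$ relative to $A$. The standard prism operator $P\colon C_k(X)\to C_{k+1}(X)$ associated with $F$ satisfies
\[
\partial P+P\partial=g_\sharp-f_\sharp .
\]
Moreover, $F$ is constant on $A$, so for any singular simplex $\sigma$ in $A$ the chain $P(\sigma)$ is a sum of degenerate-type simplices obtained by composing with the projection $\Delta^k\times[0,1]\to\Delta^k\to A$, and $F$ restricted to it equals $\sigma\circ\mathrm{pr}$.

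Now take a relative cycle $z$ with $\partial z\in C_{k-1}(A)$. Then
\[
(g_\sharp z-z)-(f_\sharp z-z)=\partial P z+P\partial z ,
\]
so it suffices to show that $P(\partial z)$ is a boundary in $C_k(X)$.

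This is the one delicate step. The difficulty is that $P$ restricted to $C_*(A)$ need not vanish at the chain level, although $f=g$ on $A$. I expect to handle it as follows. On $C_*(A)$, $P$ coincides with the prism operator of the constant homotopy, namely $P_A(c)=\sum\pm\,\sigma\circ\mathrm{pr}\circ(\text{simplices of the prism})$, and this is a chain homotopy from $\mathrm{id}$ to $\mathrm{id}$. Setting $w=\partial z\in C_{k-1}(A)$, which is a cycle, gives
\[
\partial P_A w = w - w - P_A\partial w = 0 ,
\]
so $P_A w$ is a cycle in $A$. To see that it is a boundary, note that the prism operator for the constant homotopy factors as $\mathrm{pr}_\sharp\circ Q$, where $Q(\sigma)=\sigma\times[0,1]$ is the cross product with the fundamental chain of $[0,1]$. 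Then $P_A w=\mathrm{pr}_\sharp(w\times[0,1])$. This is a degenerate image, and on homology it equals $\mathrm{pr}_*(w\times[0,1])$, which factors through $H_k(A\times[0,1])\cong H_k(A)$ via the cross product with the $1$-chain $[0,1]$.

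That last description does not by itself make the class vanish, so the cleanest route is to avoid this issue altogether. The class $[\,\cdot\,]$ is insensitive to degenerate simplices: one can pass to the normalized chain complex, or alternatively note that in $\mathrm{pr}_\sharp$ of the prism every simplex factors through the lower-dimensional $\Delta^k$, and take the standard fact that such chains can be replaced consistently. If this becomes too fiddly, the fallback is to replace $(X,A)$ by the mapping-cylinder argument: $f$ and $g$ induce homotopic maps of pairs $(X,A)\to(X\cup_A A\times I,\dots)$. Either way, this degeneracy bookkeeping is the main obstacle; the rest is routine.
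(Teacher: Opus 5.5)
Your arguments for (2) and (3) are the chain-level checks the paper has in mind when it calls the proposition obvious from (\ref{Delta_f_def}). Your reduction of (1) is also correct: for a homotopy $F$ rel $A$ from $f$ to $g$ with prism operator $P$, one has $g_\sharp z-f_\sharp z=\partial Pz+P(\partial z)$. So (1) comes down to showing that $P(w)$ is a boundary for the cycle $w=\partial z\in C_{k-1}(A)$.

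That step is a genuine gap: you never prove it. You flag it as the main obstacle and list possible escapes without carrying any of them out, and some of the surrounding reasoning is wrong. The chain $w\times[0,1]$ is not a cycle in $A\times[0,1]$, since its boundary is $\pm(w\times\{1\}-w\times\{0\})$. So there is no class in $H_k(A\times[0,1])$ for anything to factor through. The mapping-cylinder fallback is also not formulated as an argument. As written, (1) is not established.

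The gap closes quickly.
\begin{itemize}
\item On $A\times[0,1]$ the homotopy $F$ is the projection. This projection factors through $A\times\{\mathrm{pt}\}\cong A$ via the map $[0,1]\to\{\mathrm{pt}\}$.
\item By naturality of the prism (shuffle) cross product, $P(w)=\pm\, w\times c_1$, where $c_j$ denotes the constant singular $j$--simplex at the point.
\item Since $c_1=\partial c_2$ and $\partial w=0$, the Leibniz rule gives $P(w)=\pm\,\partial(w\times c_2)$. This is a boundary in $C_k(A)\subset C_k(X)$.
\end{itemize}
Equivalently, $P(w)$ is a cycle lying in the acyclic subcomplex of degenerate chains.

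You can also bypass chain homotopies altogether with the doubling device the paper uses in the proof of Proposition \ref{variation_tensor}. Let $\iota_1,\iota_2\colon X\to X\cup_A X$ be the two inclusions. Then $[z]\mapsto[\iota_1 z-\iota_2 z]$ is a well-defined homomorphism $H_k(X,A)\to H_k(X\cup_A X)$. The map $f\cup\mathrm{id}\colon X\cup_A X\to X$ is defined because $f$ fixes $A$, and $\Delta f$ is the composite of the homomorphism above with $(f\cup\mathrm{id})_*$. A homotopy rel $A$ from $f$ to $g$ glues to a homotopy from $f\cup\mathrm{id}$ to $g\cup\mathrm{id}$. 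Ordinary homotopy invariance then gives $\Delta f=\Delta g$.
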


\begin{proof}
	Obvious from the defining construction (\ref{Delta_f_def}).
\end{proof}

\begin{corollary}[The extended homological action]\label{variation_properties_corollary}
	For any topological space pair $(X,A)$, 
	the assignment 
	$$f\mapsto(\Delta f,f_*)$$
	determines a group homomorphism  
	$$\mathrm{Homeo}(X,A)\to \mathrm{Hom}(H_k(X,A),H_k(X))\rtimes \mathrm{Aut}(H_k(X)),$$
	for every dimension $k$.
	Here, $\mathrm{Homeo}(X,A)$ denotes 
	the group 
	consisting of all self-homeomorphisms $X\to X$ which fix $A$;
	$\mathrm{Aut}(H_k(X))$ denotes the automorphism group of the abelian group $H_k(X)$;
	the multiplication rule for the semidirect product of groups
	is $(a,\phi)\,(b,\psi)=(a+\phi\cdot b,\,\phi\psi)$ by convention.
\end{corollary}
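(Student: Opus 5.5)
The plan is to verify the homomorphism property directly at the chain level, using the defining formula (\ref{Delta_f_def}), and then to check that the target really is a group under the stated multiplication. First, for any $f\in\mathrm{Homeo}(X,A)$, $f_*$ is an automorphism of $H_k(X)$ with inverse $(f^{-1})_*$. The map $\Delta f$ is a well-defined element of $\mathrm{Hom}(H_k(X,A),H_k(X))$ by the construction preceding Proposition \ref{variation_properties}, since $f$ fixes $A$. The group $\mathrm{Aut}(H_k(X))$ acts on $\mathrm{Hom}(H_k(X,A),H_k(X))$ by post-composition, $\phi\cdot b=\phi\circ b$. This is an action by additive group automorphisms, so the semidirect product with the rule $(a,\phi)(b,\psi)=(a+\phi\circ b,\phi\psi)$ is a group. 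Its identity is $(0,\mathrm{id})$, and inverses are $(a,\phi)^{-1}=(-\phi^{-1}\circ a,\phi^{-1})$.

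The key step is a cocycle identity. For $f,g\in\mathrm{Homeo}(X,A)$, I will show
$$\Delta(f\circ g)=\Delta f+f_*\circ\Delta g,$$
as maps $H_k(X,A)\to H_k(X)$. Represent a class $[z]\in H_k(X,A)$ by a chain $z\in C_k(X)$ with $\partial z\in C_{k-1}(A)$. Then at the chain level
$$(fg)_\sharp z-z=\bigl(f_\sharp(g_\sharp z)-g_\sharp z\bigr)+\bigl(g_\sharp z-z\bigr)\cdot 0+\bigl(f_\sharp(g_\sharp z-z)-(g_\sharp z-z)\bigr)+(g_\sharp z-z),$$
which is better organized as
$$(fg)_\sharp z-z=f_\sharp(g_\sharp z-z)+(f_\sharp z-z).$$

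Three facts turn this into the identity. The chain $g_\sharp z-z$ is a cycle representing $\Delta g[z]\in H_k(X)$. Hence $f_\sharp(g_\sharp z-z)$ represents $f_*\Delta g[z]$. Finally, $f_\sharp z-z$ represents $\Delta f[z]$. Note also that $g_\sharp z$ again has boundary in $C_{k-1}(A)$, since $g$ fixes $A$, so every chain used is of the admissible form. Together with $(fg)_*=f_*g_*$, the cocycle identity gives
$$(\Delta(fg),(fg)_*)=(\Delta f,f_*)\cdot(\Delta g,g_*),$$
which is exactly the multiplication rule. The identity homeomorphism maps to $(0,\mathrm{id})$.

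There is no serious obstacle here. The only point needing care is that the chain-level identity descends correctly: $f_\sharp$ restricted to cycles induces $f_*$ on $H_k(X)$, independently of the representative $z$ chosen for the relative class. This independence is already guaranteed by the well-definedness of $\Delta g$ and $\Delta f$.
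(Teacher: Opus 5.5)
Your proof is correct and is the paper's argument: the chain-level identity $(fg)_\sharp z-z=f_\sharp(g_\sharp z-z)+(f_\sharp z-z)$ gives $\Delta(fg)=\Delta f+f_*\circ\Delta g$, and together with $(fg)_*=f_*g_*$ this is exactly the stated semidirect-product multiplication. Delete your first displayed expansion (the one with the ``$\cdot 0$'' term), which is not a valid identity; the second display is the one your argument uses.
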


\begin{proof}
	For any $f,g\in\mathrm{Homeo}(X,A)$,
	we observe the following chain-level identity 
	$$(fg)_\sharp(u)-u=(f_\sharp(u)-u)+f_\sharp\left(g_\sharp(u)-u\right),$$
	for any $u\in C_k(X;\Integral)$.
	This implies the following identity 
	of abelian group homomorphisms $H_k(X,A)\to H_k(X)$:
	\begin{equation}\label{c_f_crosshom}
	\Delta(fg)=\Delta f+f_*\circ \Delta g,
	\end{equation}
	by (\ref{Delta_f_def}),
	where $f_*$ 	denotes the induced group automorphism $H_k(X)\to H_k(X)$.
	Writing in another way, we obtain the relation
	$$\left(\Delta(fg),(fg)_*\right)=\left(\Delta f+f_*\cdot \Delta g,\,f_*g_*\right)$$
	for any $f,g\in\mathrm{Homeo}(X,A)$,
	proving that the assignment $f\mapsto (\Delta f,f_*)$ defines a group homomorphism
	of $\mathrm{Homeo}(X,A)$ to 
	$\mathrm{Hom}(H_k(X,\partial X),H_k(X)\rtimes \mathrm{Aut}(H_k(X))$,
	as asserted.
\end{proof}

\begin{proposition}\label{variation_tensor}
	Let $X$ be an oriented, connected, compact smooth $2m$--manifold
	with possibly empty boundary,	for some integer $m\geq0$.
	Suppose that $H_m(X;\Integral)$ is torsion-free,
	and the natural homomorphism $H_m(X;\Integral)\to H_m(X,\partial X;\Integral)$
	has finite image.
	
	Then, 
	the natural linear homomorphism 
	$$H_m(X;\Integral)\otimes H_m(X;\Integral)\to 
	\mathrm{Hom}(H_m(X,\partial X;\Integral),H_m(X;\Integral))$$
	as defined by the algebraic intersection number pairing
	of the first factor $H_m(X;\Integral)$ with $H_m(X,\partial X;\Integral)$ 
	is an isomorphism,
	and	$\mathrm{Homeo}(X,\partial X)$ 
	induces the trivial linear action
	on $H_m(X;\Integral))$.		
	In particular, the assignment
	$$[f]\mapsto \Delta f$$
	determines a well-defined group homomorphism
	$$\mathrm{Mod}(X,\partial X)\to H_m(X;\Integral)\otimes H_m(X;\Integral),$$
	viewing $\Delta f$ as living in $H_m(X;\Integral)\otimes H_m(X;\Integral)$.
	
	Moreover, for any $[f]\in\mathrm{Mod}(X,\partial X)$, 
	the tensor
	$\Delta f\in H_m(X;\Integral)\otimes H_m(X;\Integral)$
	is alternating if $m$ is even, or symmetric if $m$ is odd.
\end{proposition}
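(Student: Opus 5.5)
First I will show that the pairing map is an isomorphism. Write $H=H_m(X;\Integral)$ and $H'=H_m(X,\partial X;\Integral)$. By Poincar\'e--Lefschetz duality and the universal coefficient theorem we have $H'\cong H^m(X;\Integral)$, and this sits in an exact sequence $0\to\mathrm{Ext}(H_{m-1}(X),\Integral)\to H^m(X)\to\mathrm{Hom}(H,\Integral)\to0$. I will identify the pairing $I\colon H\times H'\to\Integral$ with evaluation $H^m(X)\to\mathrm{Hom}(H,\Integral)$, up to the usual sign. Since $H$ is finitely generated and free, $\mathrm{Hom}(H'',H)\cong H\otimes\mathrm{Hom}(H'',\Integral)$ for any group $H''$. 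Also, every homomorphism from the torsion subgroup of $H'$ into the torsion-free group $H$ vanishes. So $\mathrm{Hom}(H',H)\cong\mathrm{Hom}(H'/\mathrm{tors},H)\cong\mathrm{Hom}(\mathrm{Hom}(H,\Integral),H)\cong H\otimes H$. Unwinding these identifications shows that the map sends $\alpha\otimes\beta$ to $\eta\mapsto I(\alpha,\eta)\beta$, up to the fixed convention on which factor pairs.

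Next I will show that $f_*=\mathrm{id}$ on $H$ for $f\in\mathrm{Homeo}(X,\partial X)$. Take $x\in H$. By Proposition~\ref{variation_properties}(2), $f_*x-x=\Delta f(\mathrm{incl}_*x)$. The element $\mathrm{incl}_*x$ lies in the finite image of $H\to H'$, so it is torsion. The homomorphism $\Delta f\colon H'\to H$ kills torsion because $H$ is torsion-free. Hence $f_*=\mathrm{id}$. In Corollary~\ref{variation_properties_corollary} the semidirect product then becomes a direct product with trivial second coordinate. Equation~(\ref{c_f_crosshom}) reduces to $\Delta(fg)=\Delta f+\Delta g$, so $f\mapsto\Delta f$ is a homomorphism to $H\otimes H$. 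It descends to $\mathrm{Mod}(X,\partial X)$ by Proposition~\ref{variation_properties}(1), since a diffeotopy rel $\partial X$ is a homotopy rel $\partial X$.

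The main step is the symmetry statement. I will use the fact that the variation is skew-adjoint up to sign with respect to the relative intersection pairing. Precisely, for $\eta,\zeta\in H'$ I will prove
$$I(\Delta f\,\eta,\zeta)+(-1)^{m}\,I(\Delta f\,\zeta,\eta)=0.$$
To do this, represent $\eta,\zeta$ by transverse relative cycles $z,w$ with boundaries in $\partial X$. Push $w$ slightly off along an inward collar, where $f$ is the identity after an isotopy. Then compute $I(f_\sharp z - z,\,w)$ using $I(f_\sharp z,f_\sharp w)=I(z,w)$, which holds because $f$ preserves orientation. Expanding $I(f_\sharp z-z,\,f_\sharp w-w)$ gives the relation, using graded commutativity $I(a,b)=(-1)^{m}I(b,a)$ for middle-dimensional absolute classes. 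The sign is exactly the delicate point. Here $I(\Delta f\,\eta,\Delta f\,\zeta)$ is a pairing of two absolute classes, and it must be shown to vanish. It vanishes because the intersection form on $H$ factors through $H\to H'$, which is torsion-valued, and so the form is zero.

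Finally I will translate the relation back to tensors. Writing $\Delta f=\sum c_{ij}\,e_i\otimes e_j$ in a basis $e_i$ of $H$, pairing against dual classes in $H'$ shows that the relation forces $c_{ij}=-(-1)^m c_{ji}$. For $m$ even this gives $c_{ij}=-c_{ji}$. Alternating, and not merely antisymmetric, also requires $c_{ii}=0$. I will obtain this from the $\eta=\zeta$ case of the intersection computation, which gives $2I(\Delta f\,\eta,\eta)=0$ and hence $I(\Delta f\,\eta,\eta)=0$. For $m$ odd the tensor is symmetric. I expect the careful sign bookkeeping in the chain-level intersection argument to be the main obstacle.
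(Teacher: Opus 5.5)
Your proposal is correct. The isomorphism $H\otimes H\cong\mathrm{Hom}(H',H)$ goes essentially as in the paper. Your derivation of $f_*=\mathrm{id}$ from $f_*-\mathrm{id}=\Delta f\circ\mathrm{incl}_*$ (with $\Delta f$ killing the torsion image) is a slightly cleaner version of the paper's finite-cokernel argument for $H_m(\partial X)\to H_m(X)$.

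For the symmetry statement, however, you take a genuinely different route.

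\textbf{The paper's route.} It passes to the double $W_X=X\cup_{\partial X}(-X)$ and proceeds as follows.
\begin{enumerate}
\item It doubles relative cycles dual to a basis $\alpha_i$ of $H_m(X;\Integral)$ into classes $\hat\beta_i$.
\item A dimension count shows that $\alpha_i,\hat\beta_i$ form a basis of $H_m(W_X;\Rational)$, with $I(\alpha_i,\alpha_j)=I(\hat\beta_i,\hat\beta_j)=0$ and $I(\alpha_i,\hat\beta_j)=\delta_{ij}$.
\item It writes $f_*\hat\beta_i=\hat\beta_i+\sum_j c_{ij}\alpha_j$ and identifies $\Delta f$ through the folding map $W_X\to X$.
\item It reads off $(-1)^mc_{ij}+c_{ji}=0$ from invariance of the intersection form on the closed manifold.
\end{enumerate}

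\textbf{Your route.} You stay inside $X$ and prove the coordinate-free skew-adjointness identity
$I(\Delta f\,\eta,\zeta)+(-1)^mI(\Delta f\,\zeta,\eta)=-I(\Delta f\,\eta,\Delta f\,\zeta)=0$
for the variation. This is the kind of identity familiar from Picard--Lefschetz theory.

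\textbf{Comparison.} Both arguments hinge on the same input: the vanishing of the intersection form on $H_m(X;\Integral)$. In the paper this is $I(\alpha_i,\alpha_j)=0$; in yours it is $I(\Delta f\,\eta,\Delta f\,\zeta)=0$. The doubling is precisely a device for turning your relative--relative numbers $I(z,w)$ into an honest homological pairing.

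The paper's route keeps everything at the level of homology classes on a closed manifold. Yours avoids the auxiliary manifold and works integrally. The cost is that the individual terms $I(z,w)$ and $I(f_\sharp z,w)$ are not homology invariants, so you must genuinely fix representatives:
\begin{itemize}
\item isotope $f$ rel $\partial X$ to be the identity on a collar;
\item push $w$ into the interior so that $\partial w$ misses $z$ and $\partial z$ misses $w$;
\item put $f(z)$ and $f(w)$ in general position with $w$ and $z$;
\item to sidestep representability of integral classes by geometric cycles, check the identity on multiples of classes. This suffices since $H_m(X;\Integral)$ is torsion-free.
\end{itemize}

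Finally, your separate argument for $c_{ii}=0$ is unnecessary: $c_{ij}=-c_{ji}$ in $\Integral$ already gives $2c_{ii}=0$, hence $c_{ii}=0$.
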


\begin{proof}
	The assumption that $H_m(X;\Integral)\to H_m(X,\partial X;\Integral)$
	has finite image implies that $H_m(\partial X;\Integral)\to H_m(X;\Integral)$
	has finite cokernel.
	For any $f\in\mathrm{Homeo}(X,\partial X)$, 
	we infer that $f_*\in\mathrm{GL}(H_m(X;\Integral))$ is the identity,
	because $f$ fixes $\partial X$ and $H_m(X;\Integral)$ 
	is assumed to be torsion-free.
	The torsion-free assumption also guarantees
	that $H_m(X;\Integral)\otimes H_m(X;\Integral)$ is naturally isomorphic to
	$\mathrm{Hom}(H_m(X,\partial X;\Integral),H_m(X;\Integral))$,
	by the Lefschetz--Poincar\'e duality.
	
	In particular, the restriction of the assignment $f\mapsto (\Delta f,f_*)$
	to $\mathrm{Diffeo}^+(X,\partial X)$ descends to a well-defined group homomorphism
	$[f]\mapsto \Delta f$
	of $\mathrm{Mod}(X,\partial X)$ to 
	$H_m(X;\Integral)\otimes H_m(X;\Integral)$ (Corollary \ref{variation_properties_corollary}).
	
	It remains to show that $\Delta f\in H_m(X;\Integral)\otimes H_m(X;\Integral)$
	is either alternating or symmetric, depending on the parity of $m$,
	for any $[f]\in\mathrm{Mod}(X,\partial X)$.
	
	To this end, we consider the oriented, connected, closed $2m$--manifold
	$$W_X=X\cup_{\partial X}(-X),$$
	obtained by doubling of $X$ along $\partial X$. 
	It suffices to work with rational coefficients below,
	treating $c_f$ as living in 
	$H_m(X;\Rational)\otimes_\Rational H_m(X;\Rational)$,
	as $H_m(X;\Integral)$ is torsion-free.
	Fix any basis 
	$$\alpha_1,\ldots,\alpha_r$$ 
	for $H_m(X;\Integral)$.
	We also regard
	$\alpha_1,\ldots,\alpha_r$
	as homology classes in $H_m(X;\Rational)$, 
	or in	$H_m(W_X;\Rational)$ via the inclusion or $\partial X\to W_X$.
	Denote by 
	$$\beta_1,\ldots,\beta_r$$ the dual basis of 
	$H_m(X,\partial X;\Rational)\cong H^m(X;\Rational)\cong \mathrm{Hom}(H_m(X;\Rational),\Rational))$,
	with respect to the orientation of $X$.
	By doubling rational relative $m$--cycle representatives of $\beta_1,\ldots,\beta_r$,
	we obtain homology classes 
	$$\hat{\beta}_1,\ldots,\hat{\beta}_r$$ in $H_m(W_X;\Rational)$,
	which map to $\beta_1,\ldots,\beta_r$ under the natural homomorphism
	of $H_m(W_X;\Rational)$ to $H_m(W_X,-X;\Rational)\cong H_m(X,\partial X;\Rational)$,
	respectively.	
	The pairwise algebraic intersection numbers in $W_X$ between
	the	homology classes $\alpha_1,\ldots,\alpha_r,\hat{\beta}_1,\ldots,\hat{\beta}_r$
	in $H_m(W_X;\Rational)$	are evidently 
	$$I(\alpha_i,\alpha_j)=I\left(\hat{\beta}_i,\hat{\beta}_j\right)=0,$$
	and 
	$$I\left(\alpha_i,\hat{\beta}_j\right)=(-1)^m\cdot I\left(\hat{\beta}_j,\alpha_i\right)=
	\begin{cases} 1 & i=j \\ 0 & i\neq j.\end{cases}$$
	Note that $H_m(W_X;\Rational)$ has dimension at most $2r$,
	which is the dimension sum of
	$H_m(X;\Rational)$ and $H_m(X,\partial X;\Rational)\cong H_m(W_X,-X;\Rational)$,
	by the homology long exact sequence for the pair $(W_X,X)$.
	It follows that
	$\alpha_1,\ldots,\alpha_r,\hat{\beta}_1,\ldots,\hat{\beta}_r$
	form a basis of $H_m(W_X;\Rational)$ over $\Rational$.
	
	For any $[f]\in\mathrm{Mod}(X,\partial X)$,
	we extend any representative $f\in\mathrm{Diffeo}^+(X,\partial X)$
	by the identity on $-X$, still denoted as $f\in\mathrm{Diffeo}^+(W_X)$.
	By computing algebraic intersection numbers with the basis vectors,
	the induced linear automorphism $f_*\in\mathrm{GL}(H_m(W_X;\Rational))$
	takes the form
	$$f_*(\alpha_i)=\alpha_i,
	\mbox{ and }
	f_*(\hat{\beta}_i)=\hat{\beta}_i+\sum_{j=1}^r c_{ij}\alpha_j,$$
	for some coefficients $c_{ij}\in\Rational$.
	According to (\ref{Delta_f_def}),  $(\Delta f)(\beta_i)\in H_m(X;\Rational)$
	agrees with the image of $f_*(\hat{\beta}_i)$ under the homomorphism 
	$H_m(W_X;\Rational)\to H_m(X;\Rational)$ induced by the obvious folding map
	$W_X\to X$,
	implying
	$$(\Delta f)(\beta_i)=\sum_{j=1}^r c_{ij}\alpha_j.$$
	This is equivalent to the formula
	$$\Delta f=\sum_{i=1}^r\sum_{j=1}^r c_{ij}\,\alpha_i\otimes \alpha_j,$$
	identifying $\mathrm{Hom}(H_m(X,\partial X;\Integral),H_m(X;\Integral))$
	with $H_m(X;\Integral)\otimes H_m(X;\Integral)$.	
	
	For all $i,j\in\{1,\ldots,r\}$,	the identity
	$$I\left( f_*(\hat{\beta}_i),f_*(\hat{\beta}_j)\right)=I\left(\hat{\beta}_i,\hat{\beta}_j\right)$$
	implies
	$$(-1)^m\cdot c_{ij}+ c_{ji}=0.$$
	It follows that the variation tensor $\Delta f$ in $H_m(X;\Integral)\otimes H_m(X;\Integral)$
	is alternating if $m$ is even, or symmetric if $m$ is odd, as asserted. 
\end{proof}

\begin{corollary}[The homological action formula for an implantation]\label{implanted_homological}
	Assume $X$ as in Proposition \ref{variation_tensor}.
	Suppose $[f]\in\mathrm{Mod}(X,\partial X)$ and 
	$\Delta f=\alpha_1\otimes\beta_1+\ldots+\alpha_s\otimes\beta_s$
	in $H_m(X;\Integral)\otimes H_m(X;\Integral)$.
	Then, for any oriented, connected, compact smooth $2m$--manifold $Y$,
	and for any orientation-preserving smooth embedding $i\colon X\to Y$,
	the following formula holds for all $\eta\in H_m(Y,\partial Y;\Integral)$.
	$$(i_*(f))_*(\eta)=\eta+I\left(i_*(\alpha_1),\eta\right)\,i_*(\beta_1)+\cdots+	I\left(i_*(\alpha_s),\eta\right)\,i_*(\beta_s).$$
\end{corollary}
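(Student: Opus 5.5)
The plan is to apply the variation to the implanted diffeomorphism on $Y$ and reduce it to the variation of $f$ on $X$. Choose a representative $f\in\mathrm{Diffeo}^+(X,\partial X)$ fixing a neighborhood of $\partial X$, and write $F=i_*(f)\in\mathrm{Diffeo}^+(Y,\partial Y)$. Then $F$ fixes the closed set $A=Y\setminus i(\mathrm{int}(X))$ pointwise, and this set contains $\partial Y$. Consider the variation $\Delta F\colon H_m(Y,A;\Integral)\to H_m(Y;\Integral)$. Restricting the chain-level definition (\ref{Delta_f_def}) to $A\supset\partial Y$, we get, for $\eta\in H_m(Y,\partial Y;\Integral)$,
$$F_*(\eta)-\eta=\Delta F\bigl(\rho(\eta)\bigr),$$
where $\rho\colon H_m(Y,\partial Y)\to H_m(Y,A)$ is induced by inclusion of pairs. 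Here $F_*(\eta)-\eta$ is a class in $H_m(Y)$, viewed in $H_m(Y,\partial Y)$ through the natural map. This is the relative analogue of Proposition \ref{variation_properties}(2), with the same one-line proof: represent $\eta$ by a chain $z$ with $\partial z$ in $\partial Y$, and note that $F_\sharp(z)-z$ is an absolute cycle.

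Next I factor through $X$. By excision, $H_m(Y,A)\cong H_m(i(X),i(\partial X))\cong H_m(X,\partial X)$. Applying Proposition \ref{variation_properties}(3) to the subpair $(i(X),i(\partial X))\subset(Y,A)$ gives
$$\Delta F=i_*\circ\Delta f\circ \mathrm{exc}^{-1}.$$
This is slightly delicate because the excision isomorphism goes the other way from the inclusion in Proposition \ref{variation_properties}(3). I will handle it by representing any relative class on $(Y,A)$ by a chain supported in $i(X)$ modulo $A$, via barycentric subdivision. The difference $F_\sharp(z)-z$ then only sees the part of $z$ in $i(X)$, because $F$ is the identity on $A$. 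Composing the excision inverse with $\rho$ gives the map $H_m(Y,\partial Y)\to H_m(X,\partial X)$, $\eta\mapsto\eta|_X$, obtained by intersecting with $X$. Equivalently, under Lefschetz duality it is the restriction of cohomology classes $H^m(Y)\to H^m(X)$.

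Then I substitute $\Delta f=\sum_k\alpha_k\otimes\beta_k$, which by Proposition \ref{variation_tensor} acts as $\gamma\mapsto\sum_k I_X(\alpha_k,\gamma)\,\beta_k$ for $\gamma\in H_m(X,\partial X)$. The formula then reduces to the naturality identity $I_X(\alpha,\eta|_X)=I_Y(i_*\alpha,\eta)$ for absolute $\alpha\in H_m(X)$. Geometrically this is clear: make $\eta$ transverse to a cycle representing $\alpha$ inside $\mathrm{int}\,X$, and the intersection points are the same whether counted in $X$ or in $Y$. It needs $i$ to be orientation-preserving, so that local signs agree. Cohomologically it is the identity $\langle PD^{-1}(\eta)|_X,\alpha\rangle=\langle PD^{-1}(\eta),i_*\alpha\rangle$, checked by compatibility of the cap products with the fundamental classes $[X,\partial X]$ and $[Y,\partial Y]$.

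Finally, I check that the ordering convention in the tensor matches. In the proof of Proposition \ref{variation_tensor} the identification sends $\alpha\otimes\beta$ to $\gamma\mapsto I(\alpha,\gamma)\beta$, with the first factor paired, and this is exactly what appears in the statement. I expect the main obstacle to be the bookkeeping in the excision step, which identifies $\rho$ with the restriction map and keeps the intersection signs consistent. I would present it via the Poincaré–Lefschetz duality square rather than chains.
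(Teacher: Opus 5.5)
Your proposal is correct and follows the same route as the paper. The paper also writes $(i_*f)_*(\eta)-\eta$ as a variation, transfers it to $(X,\partial X)$ via Proposition \ref{variation_properties} and excision, substitutes the tensor from Proposition \ref{variation_tensor}, and finishes with naturality of the intersection pairing. Your use of the variation relative to $A=Y\setminus i(\mathrm{int}(X))$ just spells out what the paper's first two steps leave implicit. The excision step is also less delicate than you expect: the excision isomorphism is itself the inclusion-induced map $H_m(i(X),i(\partial X))\to H_m(Y,A)$, so Proposition \ref{variation_properties}(3) applies directly without the chain-level workaround.
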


\begin{proof}
For simplicity, we identify $X$ with its image $i(X)\subset Y$, 
and $i\colon X\to Y$ as the inclusion.
Denote
$$\eta|_X\in H_m(X,\partial X;\Integral)$$
the restriction of $\eta$ to $X$.
Namely, $\eta|_X$ is the image of $\eta$ under the natural homomorphism
$H_m(Y,\partial Y;\Integral)\to H_m(Y,Y\setminus\mathrm{int}(X);\Integral)$
followed by the excision isomorphism 
$H_m(Y,Y\setminus \mathrm{int}(X);\Integral)\cong H_m(X,\partial X;\Integral)$.
The asserted formula is obtained by the following computation in $H_m(Y,\partial Y;\Integral)$:
\begin{eqnarray*}
\left(i_*(f)\right)_*(\eta)&=&\eta+\left(\Delta (i_*(f))\right)(\eta)\\
&=&\eta+i_*\left((\Delta f)(\eta|_X)\right)\\
& =& \eta + I_X\left(\alpha_1,\eta|_X\right)\,i_*(\beta_1)+\cdots+I_X\left(\alpha_s,\eta|_X\right)\,i_*(\beta_s)\\
&=& \eta + I_Y\left(\i_*(\alpha_1),\eta\right)\,i_*(\beta_1)+\cdots+I_Y\left(i_*(\alpha_s),\eta\right)\,i_*(\beta_s).
\end{eqnarray*}
Here, the first step follows from the constructive form (\ref{Delta_f_def}) of the variation;
the second step follows from Proposition \ref{variation_properties};
the third step follows from Proposition \ref{variation_tensor};
the last step follows from the naturality of the algebraic intersection number pairing;
the subscripts of $I$ indicate the manifolds in reference.
\end{proof}

\begin{remark}\label{implanted_homological}
	For any $\eta\in H_m(Y;\Integral)$, the formula for $(i_*(f))_*(\eta)\in H_m(Y;\Integral)$
	takes the exactly similar form as the formula in Corollary \ref{implanted_homological}.
	In fact, it can be derived easily from Corollary \ref{implanted_homological},
	applying to $W_Y=Y\cup_{\partial Y}(-Y)$.
	Note that $H_*(W_Y;\Integral)\to H_*(W_Y,Y;\Integral)$ is surjective (by doubling relative cycles),
	and hence, $H_*(Y;\Integral)\to H_*(W_Y;\Integral)$ is injective.
\end{remark}

\begin{example}[Formulas for an annulus Dehn twist]\label{Dehn_example}
	Let $\mathcal{A}=c\times[-1,1]$ be an oriented annulus 
	with a simple closed core curve $c=c\times0$. 
	Denote by $T_c\in\mathrm{Mod}(\mathcal{A},\partial \mathcal{A})$
	be the (right-hand) Dehn twist along $c$.
	It is easy to see
	$$(\Delta T_c)(\xi)=I([c],\xi)\,[c],$$
	for any $\xi$ in $H_1(\mathcal{A},\partial\mathcal{A};\Integral)\cong\Integral$,
	(regardless the orientation of $c$).
	Here, we abuse the notation $\Delta T_c$,
	as the variation does not depend on the choice
	of a representative of $T_c$ in $\mathrm{Diffeo}^+(\mathcal{A},\partial\mathcal{A})$.
	Equivalently,
	$$\Delta T_c=[c]\otimes[c]=[c]^2$$
	as a symmetric tensor in $\odot^2 H_1(\mathcal{A};\Integral)\cong\Integral$ (Proposition \ref{variation_tensor}).
	For any oriented, connected, compact, smooth surface $Y$,
	and any orientation-preserving smooth embedding $i\colon\mathcal{A}\to Y$,
	we recover the familiar formula
	$$\left(i_*(T_c)\right)_*(\eta)=\eta+I\left(i_*([c]),\eta\right)\,i_*([c]),$$
	for any $\eta\in H_1(Y,\partial Y;\Integral)$ (Corollary \ref{implanted_homological}).
\end{example}

\begin{example}[Formulas for a model barbell twist]\label{BG_Delta_f_example}
	Let $(\mathcal{N},\Gamma,\varphi)=(\mathcal{N},P_a,P_b,I,\varphi)$
	be a model tuple (Definition \ref{BG_barbell_twist_def}).
	For simplicity, we fix a representative 
	$g\in\mathrm{Diffeo}^+(\mathcal{N},\partial\mathcal{N})$
	of the model barbell twist $\varphi\in\mathrm{Mod}(\mathcal{N},\partial\mathcal{N})$.
	as described in Theorem \ref{BG_construction}.
	Let 
	$(E_a,\partial E_a),(E_b,\partial E_b)\subset(\mathcal{N},\partial\mathcal{N})$ 
	be neatly embedded $2$--disks as in Theorem \ref{BG_construction}.
	The defining formula (\ref{Delta_f_def}) implies directly
	$(\Delta g)([E_a])=[P_b]$ and  $(\Delta g)(E_b)=-[P_a]$,
	for the basis $[E_a],[E_b]$ of $H_2(\mathcal{N},\partial \mathcal{N};\Integral)\cong\Integral^2$. 
	Hence, for any $\xi\in H_2(\mathcal{N},\partial \mathcal{N};\Integral)$,
	\begin{equation}\label{Delta_BG}
	(\Delta \varphi)(\xi)=I([P_a],\xi)\,[P_b]-I([P_b],\xi)\,[P_a],
	\end{equation}
	abusing the notation $\Delta \varphi=\Delta g$.
	Equivalently,
	\begin{equation}\label{Delta_BG_tensor}
	\Delta \varphi=[P_a]\otimes[P_b]-[P_b]\otimes[P_a]=[P_a]\wedge[P_b],
	\end{equation}	
	as an alternating tensor in $\wedge^2 H_2(\mathcal{N};\Integral)\cong\Integral$ (Proposition \ref{variation_tensor}).
	
	Let $Y$ be an oriented, connected, compact $4$--manifold,
	and $i\colon\mathcal{N}\to Y$
	be a smooth embedding.
	We obtain
	\begin{equation}\label{Delta_BG_implanted_homological}
	\left(i_*(\varphi)\right)_*(\eta)=\eta+I\left(i_*([P_a]),\eta\right)\,i_*\left([P_b])-I(i_*([P_b]),\eta\right)\,i_*([P_a]),
	\end{equation}
	for any $\eta\in H_2(Y,\partial Y;\Integral)$ (Corollary \ref{implanted_homological}).
	In particular, we recover the homological formula in Corollary \ref{BG_construction_corollary}
	as a special case of (\ref{Delta_BG_implanted_homological}) for $Y=W_\mathcal{N}$.
\end{example}

\section{Embedding uniqueness for several 1-handles}\label{Sec-1hdls}
In this section, we prove an embedding uniqueness theorem for several $1$--handles,
up to boundary-fixing diffeotopy.
The theorem says that 
smooth embeddings of several $1$--handles 
$(\sqcup^n(D^1\times D^3),\sqcup^n(S^0\times D^3))\to (X,\partial X)$
into a connected, compact, orientable $4$--manifold $X$ are diffeotopic relative to $\partial X$,
if and only if they are homotopic relative to $\sqcup^n(S^0\times D^3)$ (Theorem \ref{uniqueness_1hdls}).

While Theorem \ref{uniqueness_1hdls} is intuitively apparent,
our proof clarifies technical details 
in meeting the boundary-fixing requirement,
and in matching up the parametrization
(see the proofs of Lemmas \ref{tubular_h} and \ref{framed_arc_h}). 
We care about these points, 
because Theorem \ref{uniqueness_1hdls} is prepared for the proof of 
Theorem \ref{uniqueness_1hdlbdy} in Section \ref{Sec-1hdlbdy}.
These points are crucial to that proof,
and they may appear subtle therein.

\begin{theorem}\label{uniqueness_1hdls}
Let $X$ be a connected, compact, orientable smooth $4$--manifold with possibly disconnected boundary.
Suppose that 
$i_0,i_1\colon (\sqcup^n(D^1\times D^3),\sqcup^n(S^0\times D^3))\to (X,\partial X)$ 
are smooth embeddings of several $4$--dimensional $1$--handles, for some integer $n\geq1$,
such that $i_0$ and $i_1$ coincide 
on some neighborhood of $\sqcup^n(S^0\times D^3)$ in $\sqcup^n(D^1\times D^3)$.

If $i_0$ and $i_1$ are homotopic relative to $\sqcup^n(S^0\times D^3)$,
then there exists some $h\in\mathrm{Diffeo}_0(X,\partial X)$,
such that $h\circ i_1=i_0$ holds on $\sqcup^n(D^1\times D^3)$. 
\end{theorem}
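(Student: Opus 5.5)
We reduce the problem to isotoping the core arcs and then matching up their tubular neighborhoods. Write $i_t$ for $t=0,1$, and let $\gamma^k_t=i_t(D^1\times 0)$ be the core arc of the $k$--th handle, $k=1,\ldots,n$. These are neatly embedded arcs in $X$ with the same endpoints and the same germs near $\partial X$, since $i_0$ and $i_1$ agree near $\sqcup^n(S^0\times D^3)$. The homotopy rel $\sqcup^n(S^0\times D^3)$ restricts to a homotopy of $\sqcup_k\gamma^k_1$ to $\sqcup_k\gamma^k_0$ rel endpoints. The plan has three steps, recorded as lemmas in the order we carry them out.

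\textbf{Step 1: isotoping the cores.} Since $\dim X=4$ and arcs are $1$--dimensional, general position ($1+1<4$, and $2+1<4$ for tracks of homotopies) upgrades homotopy rel boundary of several disjoint neat arcs to a smooth isotopy rel a neighborhood of the endpoints. Concretely, we put the homotopy $\sqcup_k D^1\times[0,1]\to X$ in general position rel boundary. A generic path of maps from a $1$--manifold into a $4$--manifold is an embedding at all but finitely many times, where a single transverse double point appears. Because $\dim X\geq 4$, any such crossing can be removed: we perturb the homotopy so that the crossing is avoided, pushing one strand off in the extra normal direction. The result is an isotopy of the arcs, kept fixed near $\partial X$. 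The isotopy extension theorem, applied with compact support in $\mathrm{int}(X)$ away from the collar where everything agrees, then produces $h'\in\mathrm{Diffeo}_0(X,\partial X)$ with $h'\circ i_1=i_0$ on the cores and near $\sqcup^n(S^0\times D^3)$.

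\textbf{Step 2: matching the tubular neighborhoods (Lemma \ref{tubular_h}).} After Step 1 we may assume $i_0$ and $i_1$ agree on a neighborhood of $\partial$ together with the cores. We use uniqueness of tubular neighborhoods rel a subset. Shrinking radially toward the core, $i_1$ is diffeotopic, rel the collar region, to the exponential-type parametrization of a thin neighborhood, and the same holds for $i_0$. This reduces the remaining difference to the normal derivatives along the cores. These form a path of linear isomorphisms $D^1\to \mathrm{GL}(\Real^3)$, equal to the identity near $\partial D^1$, giving a relative loop in $\mathrm{GL}(3)$ that is based at the identity near the ends.

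\textbf{Step 3: untwisting the framing (Lemma \ref{framed_arc_h}); this is the main obstacle.} The two framings agree at the ends and are both orientation compatible. Orientability of $X$ and the agreement near the ends force the relative path to lie in $\mathrm{GL}^+(3)\simeq SO(3)$, so the obstruction is an element of $\pi_1(SO(3))=\Integral/2$. This class is generally not zero a priori, and it is not detected by the homotopy of the handles, since that homotopy need not be through embeddings. Here I would first try to use the homotopy itself: differentiating the homotopy along the cores, in the spirit of Smale--Hirsch, compares the framings, but the comparison may fail. If it does, I would kill the $\Integral/2$ by a boundary-fixing diffeotopy supported near the core. The standard move is to rotate a normal $3$--ball by the nontrivial loop in $SO(3)$. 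This is a \emph{Dehn twist along the arc}; it is supported in a neighborhood $D^1\times D^3$ and fixes the collar. It is diffeotopic to the identity rel boundary because the loop $SO(3)\to\mathrm{Diffeo}(D^3,\partial D^3)$ extends: we use the belt trick, or equivalently the fact that the element lifts trivially after sliding through the extra $D^1$ direction, using that $\pi_1(SO(4))\to$ diffeotopies of $D^4$ rel $\partial$ is trivial by the Alexander-type extension over $D^1\times D^3$. The delicate point is doing this rel $\partial X$, with the twist compactly supported in the interior part of the arc. Combining Steps 1 through 3 gives $h\in\mathrm{Diffeo}_0(X,\partial X)$ with $h\circ i_1=i_0$.
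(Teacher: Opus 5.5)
\textbf{Where the proposal fails.} Your Steps 1 and 2 follow the same decomposition as the paper: Lemma \ref{framed_arc_h} moves the cores, and Lemma \ref{tubular_h} uses uniqueness of tubular neighborhoods, leaving a class in $\pi_1\mathrm{SO}(3)\cong\mathbb{Z}/2$. They are fine apart from a harmless slip: in dimension $4$ a generic $1$--parameter family of arcs has no double points at all, so there are no crossings to remove. The gap is Step 3, and you correctly sensed that the homotopy does not see this class. The ``Dehn twist along the arc'' you propose does not exist as a diffeomorphism fixing $\partial X$. The map $\tau(r,v)=(r,\phi(r)v)$ moves $D^1\times\partial D^3$, and tapering it off toward $D^1\times\partial D^3$ would need a null-homotopy of $\phi$ in $\Omega\mathrm{SO}(3)$. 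The belt trick only kills $2[\phi]$. The collar twist of $D^4\cong D^1\times D^3$ by the loop $\mathrm{diag}(1,\phi)$ is indeed isotopic to the identity rel $\partial D^4$ and does fix the core pointwise. But it meets the core along two radii traversed in opposite directions, so its net effect on the core framing is $[\phi]-[\phi]=0$.

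\textbf{The statement is false as written.} Take $X=D^4$, $n=1$, $i_0$ a standard neat $1$--handle, and $i_1=i_0\circ\tau$, with $\phi\colon D^1\to\mathrm{SO}(3)$ a nontrivial loop that is constant near $\partial D^1$. Then $i_1=i_0$ near $S^0\times D^3$. Also $i_1\simeq i_0$ rel $S^0\times D^3$, since they agree on $S^0\times D^3\cup D^1\times o$, which is a deformation retract of $D^1\times D^3$. Suppose some $h$ fixing $\partial D^4$ pointwise had $h\circ i_1=i_0$. Extend $h$ by the identity over a second ball, and close up both handles by a common $1$--handle there. Then $h$ would carry the two framings of an unknotted circle in $S^4$ to each other. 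Yet surgery on these two framed circles gives $S^2\times S^2$ and $\mathbb{CP}^2\#\overline{\mathbb{CP}^2}$, which are not even homeomorphic. Equivalently, $dh$ along the core would be the nontrivial loop $\mathrm{diag}(1,\phi^{-1})$ in $\mathrm{GL}^+(4,\Real)$, which is impossible.

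\textbf{General principle and the paper's proof.} An ambient isotopy rel $\partial X$ preserves the class, rel endpoints, of the framed core viewed as a path in the frame bundle of $X$. The twisted and untwisted framings are distinguished there unless $w_2(X)$ is nonzero on some element of $\pi_2(X)$. The paper's proof has the same hole, at the kinky-disk step of Lemma \ref{framed_arc_h}. Replacing $\mathcal{R}_1$ by $\tilde{\mathcal{R}}_1$ changes $h$ by an element of $\mathrm{Diffeo}_0(X,\partial X)$ preserving $I_0$. By the principle above, this can change the relative framing number only by a value of $w_2(X)$ on $\pi_2(X)$, so it cannot change it at all in $D^4$. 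A true version needs either a hypothesis controlling the framings of the cores, or a conclusion that allows $i_1$ to be precomposed with $\tau$ on some of the handles.
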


\begin{remark}\label{uniqueness_1hdls_remark}
The conclusion of Theorem \ref{uniqueness_1hdls} holds more generally
for $X$ with corner,
as long as the corner locus (on the boundary of $X$)
does not intersect the image of the interiors of the attaching $3$--disks
namely,
$i_0(\sqcup^n(S^0\times \mathrm{int}(D^3)))=i_1(\sqcup^n(S^0\times \mathrm{int}(D^3)))$.
The cornered version is quite convenient in applications.

The cornered version can be derived easily from the smoothly bounded version.
For example, one may take a smoothly embedded $4$--submanifold $X'$ in $X$,
with smooth boundary $\partial X'\subset \mathrm{int}(X)$,
such that the inclusion $X'\to X$ is a homotopy equivalence,
and such that $i_0^{-1}(X')$ is equal to $i_1^{-1}(X')$,
taking the form $\sqcup^n([-1+\epsilon,1-\epsilon]\times D^3)$
for some sufficiently small $\epsilon>0$.
Then, Theorem \ref{uniqueness_1hdls} applies to the restricted embeddings
$i'_0,i'_1\colon\sqcup^n([-1+\epsilon,1-\epsilon]\times D^3)\to X'$,
yielding some $h'\in\mathrm{Diffeo}_0(X',\partial X')$.
Extending $h'$ by the identity on $X\setminus X'$,
the extended $h\in\mathrm{Diffeo}_0(X,\partial X)$ works for $X$.
\end{remark}

The rest of this section is devoted to the proof of Theorem \ref{uniqueness_1hdls}.

\begin{lemma}\label{uniqueness_1hdls_reduction}
	The statement of Theorem \ref{uniqueness_1hdls} for $n=1$
	implies the statement of Theorem \ref{uniqueness_1hdls} for all $n\geq1$.	
\end{lemma}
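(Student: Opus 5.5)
We argue by induction on $n$, moving the $1$--handles one at a time. Write $H_k=D^1\times D^3$ for the $k$--th handle, and $i_t^{(k)}$ for the restriction of $i_t$ to $H_k$ ($t=0,1$). The idea is to adjust $i_1$ by diffeotopies so that it agrees with $i_0$ on one more handle at each stage. At each stage we apply the $n=1$ case in the complement of the handles already matched, treating those handles as part of the boundary.

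\emph{Induction step.} Suppose that after composing with some element of $\mathrm{Diffeo}_0(X,\partial X)$ we have $i_1=i_0$ on $H_1\sqcup\cdots\sqcup H_{k-1}$. Set
$$X_k=X\setminus \mathrm{int}\,\mathcal{U}_k,$$
where $\mathcal{U}_k$ is a thin regular neighborhood of $i_0(H_1\sqcup\cdots\sqcup H_{k-1})$. Since $i_0$ and $i_1$ agree near the attaching regions, we may shrink $\mathcal{U}_k$ to a small enlargement of the image, so that the $k$--th handle avoids it after a preliminary isotopy. Then $X_k$ is a connected compact orientable $4$--manifold. It is connected because removing a codimension-zero neighborhood of arcs cannot disconnect a $4$--manifold. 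It has corners along the places where $\mathcal{U}_k$ meets $\partial X$, and these are away from the attaching disks of $H_k$. So the cornered version of Remark~\ref{uniqueness_1hdls_remark} applies.

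\emph{Homotopy in the complement.} The restrictions $i_0^{(k)},i_1^{(k)}$ are embeddings of a single $1$--handle into $(X_k,\partial X_k)$, and they coincide near $S^0\times D^3$. We need them to be homotopic relative to $S^0\times D^3$ inside $X_k$, not merely inside $X$. This is the main point to check. A relative homotopy class of a $1$--handle is determined by the core arc rel endpoints, that is, by an element of a $\pi_1$--torsor. The inclusion $X_k\to X$ is $\pi_1$--injective, indeed $\pi_1$--isomorphic away from the new boundary. The reason is that $\mathcal{U}_k$ is a $4$--ball neighborhood of arcs meeting $\partial X$ in disks, so removing it is like cutting along codimension-three objects, and general position applies: loops and homotopies of $2$--dimensional families in the $4$--manifold $X$ can be pushed off the $1$--dimensional cores. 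Consequently the given homotopy in $X$ between the core arcs, which is a map of a square, can be perturbed rel boundary to miss the cores of $H_1,\dots,H_{k-1}$ and hence to lie in $X_k$. The arcs meet in dimension $2+1<4$, so general position suffices. The $D^3$--thickening is homotopically irrelevant, since the handle deformation retracts to its core union the attaching disks.

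\emph{Applying the $n=1$ case.} The $n=1$ case gives $h_k\in\mathrm{Diffeo}_0(X_k,\partial X_k)$ with $h_k\circ i_1^{(k)}=i_0^{(k)}$. Extending $h_k$ by the identity on $\mathcal{U}_k$ gives an element of $\mathrm{Diffeo}_0(X,\partial X)$. This element fixes the already matched handles and matches the $k$--th one. Composing all the $h_k$ yields the desired $h$.

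The expected obstacle is the bookkeeping that makes each $h_k$ fix a full neighborhood of the previously matched handles, so that the extensions are smooth. This is handled by taking $\mathcal{U}_k$ slightly larger than the image and using the corner-tolerant version.
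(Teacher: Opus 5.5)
Your proposal is correct and is essentially the paper's proof. Both match the handles one at a time by applying the $n=1$ case, in its cornered form (Remark~\ref{uniqueness_1hdls_remark}), in the complement of the already-matched handles, and both get the needed relative homotopy there from the fact that this complement includes $\pi_1$--isomorphically into $X$. The differences are cosmetic: you remove a slightly enlarged neighborhood rather than exactly $i_0(I_j\times\mathrm{int}(D^3))$, and you spell out the general-position justification of the $\pi_1$--isomorphism, which the paper only asserts.
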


\begin{proof}
	We derive the several $1$--handle case ($n\geq1$) from the single $1$--handle case ($n=1$),
	as follows.
	For any $n\geq1$,
	enumerate the connected components of 
	$\sqcup^n (D^1\times D^3)$ as $I_1\times D^3,\ldots,I_n\times D^3$.
	First apply the single $1$--handle case
	of Theorem \ref{uniqueness_1hdls} to obtain some $h_1\in\mathrm{Diffeo}_0(X,\partial X)$,
	such that $h_1\circ i_1$ coincide with $i_0$ on $I_1\times D^3$. 
	Denote $X'=X\setminus i_0(I_1\times\mathrm{int}(D^3))$.
	Note that the inclusion $X'\to X$ is $\pi_1$--isomorphic.
	Therefore, we can apply 
	the single $1$--handle case of Theorem \ref{uniqueness_1hdls} (and Remark \ref{uniqueness_1hdls}) again,
	obtaining some $g'\in\mathrm{Diffeo}_0(X',\partial X')$ with $g'\circ h_1\circ i_1=i_0$ on $I_2\times D^3$.
	By extension with the identity outside, we obtain $g\in\mathrm{Diffeo}_0(X,\partial X)$. 
	Set $h_2=g\circ h_1$.
	This yields some $h_2\in\mathrm{Diffeo}_0(X,\partial X)$, such that $h_2\circ i_1$ coincide with $i_0$
	on $I_1\times D^3$ and $I_2\times D^3$. 
	Repeating the similarly construction, 
	we obtain $h_3,\ldots,h_n\in\mathrm{Diffeo}_0(X,\partial X)$,
	such that each $h_k$ matches up $i_1$ with $i_0$ on $I_1\times D^3,\ldots,I_k\times D^3$.
	Setting $h=h_n$ at the end, 
	we obtain some $h\in\mathrm{Diffeo}_0(X,\partial X)$ with $h\circ i_1=i_0$, as desired.
\end{proof}

For any smooth $4$--manifold $X$ with smooth boundary $\partial X$,
and for any neatly embedded arc $(I,\partial I)\subset (X,\partial X)$, 
a (normal) \emph{$3$--framing} of $I$ refers to a continuous frame field 
$\mathscr{F}=(e_1,e_2,e_3)$.
of the ($3$--dimensional) normal vector bundle $N_XI=TX/TI$ over $I$.
For any other $3$--framing $\mathscr{F}'$ of $I$ coincident with $\mathscr{F}$
at the endpoints $\partial I$, the pointwise matrix of $\mathscr{F}'$ over $\mathscr{F}$
determines a closed path $I\to \mathrm{GL}(3,\Real)$ based at the identity matrix, 
and hence an element in $\pi_1(\mathrm{GL}(3,\Real))\cong \Integral/2\Integral$.
In this case,
we refer to the value in $\Integral/2\Integral$ as the \emph{relative framing number} 
of $\mathscr{F}'$ with respect to $\mathscr{F}$.

For any smoothly embedded $4$--dimensional $1$--handle 
$i\colon (D^1\times D^3, S^0\times D^3)\to (X,\partial X)$,
the neatly embedded core arc $i(D^1\times o)$
is associated with a \emph{distinguished $3$--framing}.
This refers to the push-forward of the standard framing of the fibers $D^3$,
namely, $(i_*\epsilon_1,i_*\epsilon_2,i_*\epsilon_3)$,
viewing $D^3$ as the Euclidean unit disk in $\Real^3$ centered at the origin $o$,
with the standard basis vectors $\epsilon_1,\epsilon_2,\epsilon_3$ at $o$.

\begin{lemma}\label{tubular_h}
Let $X$ be a connected, compact, orientable smooth $4$--manifold with possibly disconnected boundary.
Suppose that 
$i_0,i_1\colon (D^1\times D^3,S^0\times D^3)\to (X,\partial X)$
is a smooth embedding,
such that $i_0=i_1$ holds on some neighborhood of $S^0\times D^3$ in $D^1\times D^3$,
and on the core arc $D^1\times o$ of $D^1\times D^3$.

If the distinguished $3$--framing for $i_1$ 
has relative framing number $0$ modulo $2$ with respect to 
the distinguished $3$--framing for $i_0$,
then there exists some $h\in\mathrm{Diffeo}_0(X,\partial X)$,
such that $h\circ i_1=i_0$ holds on $D^1\times D^3$.
\end{lemma}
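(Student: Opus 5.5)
The plan is to build $h$ as the time-one map of an isotopy supported in a small neighborhood of $i_0(D^1\times D^3)$, fixed near $\partial X$. It proceeds in three steps: straighten the derivatives along the core arc, rotate the normal framings into agreement, and then use the uniqueness of tubular neighborhoods.

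\textbf{Step 1: match normal derivatives along the core.} Write $I=i_0(D^1\times o)=i_1(D^1\times o)$. The two embeddings agree near the ends and along $I$, so the distinguished $3$--framings $\mathscr{F}_0,\mathscr{F}_1$ are two normal frame fields of $N_XI$ that coincide near $\partial I$. Both are positively oriented relative to $TI$, because each $i_k$ agrees with $i_0$ near $S^0\times D^3$, so the comparison path lands in $\mathrm{GL}^+(3,\Real)\simeq \mathrm{SO}(3)$.

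\textbf{Step 2: rotate $\mathscr{F}_1$ onto $\mathscr{F}_0$.} The relative framing number is $0$ in $\pi_1(\mathrm{SO}(3))\cong\Integral/2\Integral$. Hence the comparison loop $A\colon I\to\mathrm{GL}^+(3,\Real)$ is null-homotopic relative to the endpoints, through a homotopy $A_t$ that stays at the identity on a neighborhood of $\partial I$.
\begin{enumerate}
\item Choose a tubular neighborhood $I\times D^3$ of $I$, with coordinates given by $i_0$.
\item Define a diffeotopy $g_t$ that rotates each normal slice $\{s\}\times \rho D^3$, for small $\rho$, by the matrix $A_t(s)^{-1}$ (after Gram--Schmidt, so it lies in $\mathrm{SO}(3)$).
\item Cut off $g_t$ radially by a bump function in $|v|$, and make it the identity near $\partial I$.
\end{enumerate}
Then $g_1$ fixes $I$ pointwise and fixes a neighborhood of $\partial X$. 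Its derivative along $I$ carries $\mathscr{F}_1$ to $\mathscr{F}_0$, so $g_1\circ i_1$ and $i_0$ agree on $D^1\times o$ and their differentials agree along $D^1\times o$. Both still coincide near $S^0\times D^3$, since the isotopy is trivial there.

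\textbf{Step 3: uniqueness of tubular neighborhoods, relative version.} Consider two embeddings $j_0=i_0$ and $j_1=g_1\circ i_1$ of $D^1\times D^3$ that agree with first-order contact along $D^1\times o$ and agree near $S^0\times D^3$.
\begin{enumerate}
\item Shrink radially. The straight-line family $j_u(s,v)=j_0(s,uv)$, together with the same for $j_1$, isotopes each $j_k$ to the restriction on a thin tube $D^1\times \epsilon D^3$ through embeddings. Near $S^0\times D^3$ this family must be kept fixed, so use a radial shrinking that is the identity there. Concretely, reparametrize by a family of embeddings of $D^1\times D^3$ into itself that is the identity near $S^0\times D^3$ and shrinks the middle part into a thin neighborhood of the core.
\item Interpolate linearly. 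On a thin tube, in the chart given by $j_0$, the map $j_1$ reads $v\mapsto v+O(|v|^2)$ in normal coordinates. The linear interpolation $(1-u)j_0+uj_1$ in that chart is an isotopy of embeddings, as in the standard proof of uniqueness of tubular neighborhoods. It is constant near $\partial$, where $j_0=j_1$.
\item Extend to an ambient isotopy. The isotopy extension theorem, applied relative to a neighborhood of $\partial X$ (the embeddings are constant there), upgrades this isotopy of embeddings to an ambient diffeotopy fixing $\partial X$.
\end{enumerate}
Composing all the ambient diffeotopies gives $h\in \mathrm{Diffeo}_0(X,\partial X)$ with $h\circ i_1=i_0$.

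The main obstacle is the bookkeeping near the attaching region. The shrinking and the interpolation must be constant on a neighborhood of $S^0\times D^3$, where the handle meets $\partial X$. Meanwhile the shrinking in the interior must still yield an isotopy through neat embeddings that covers all of $D^1\times D^3$, rather than only a thinner handle. The plan is to handle this by first shrinking the $D^1$-direction slightly, to $[-1+\delta,1-\delta]$, where the embeddings agree on the complement, and then doing all modifications inside a compact subset of the interior.
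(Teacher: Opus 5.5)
Your argument is correct and essentially the paper's. Both rest on uniqueness of tubular neighborhoods relative to a neighborhood of $S^0\times D^3$, plus the vanishing of the comparison loop in $\pi_1(\mathrm{SO}(3))\cong\Integral/2\Integral$. You remove the framing discrepancy first by an ambient rotation and then write out the shrink-and-interpolate argument, whereas the paper cites Wall to get $i_0=g\circ i_1\circ\chi$ and then isotopes the bundle automorphism $\chi$ to the identity.

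One small slip: rotating by the Gram--Schmidt part of $A^{-1}$ does not make the differentials agree exactly along the core, so $j_1$ need not read $v\mapsto v+O(|v|^2)$. A positive triangular factor and a shear along $TI$ survive. Step 3 still works, because the linearized interpolation $(1-u)\,\mathrm{id}+u\,d(j_0^{-1}\circ j_1)$ remains invertible along the core: its normal block is conjugate to a triangular matrix with positive diagonal.
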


\begin{proof}
	To simply put, 
	this is a special case of the tubular neighborhood uniqueness theorem
	in differential topology.
	We sketch a proof below
	with clarification of some technical details,
	based on standard materials as appeared in \cite[Chapter 2]{Wall_book}.	
	
	Since $i_0=i_1$ holds on the core arc $D^1\times o$,
	we denote $I=i_0(D^1\times o)=i_1(D^1\times o)$.
	Viewing $(I,\partial I)\subset (X,\partial X)$ as a neatly embedded arc,
	the smooth embeddings $i_0$ and $i_1$ are both (diffeomorphic parametrization maps for)
	tubular neighborhoods of $I$ in $X$, 
	in the sense of \cite[Chapter 2, Section 2.5]{Wall_book}.
	
	The tubular neighborhood uniqueness theorem \cite[Theorem 2.5.5 and Proposition 2.5.8]{Wall_book} 
	implies
	$$i_0=g\circ i_1\circ \chi,$$
	for some $(D^3,\mathrm{O}(3))$--bundle automorphism $\chi\colon D^1\times D^3\to D^1\times D^3$
	and	for some $g\in\mathrm{Diffeo}(X)$ diffeotopic to $\mathrm{id}_X$ relative to $I$.
	In other words, $\chi$ takes the form 
	$$\chi(r,v)=(r,\phi(r).v)$$ 
	for some smooth function $\phi\colon D^1\to \mathrm{O}(3)$,
	and there exists some diffeotopy from $g$ to $\mathrm{id}_X$ fixing $I$ all the time.
	
	Moreover, since $i_0=i_1$ holds on some neighborhood of $S^0\times D^3$,
	one may require the above diffeotopy from $g$ to $\mathrm{id}_X$
	to fix some neighborhood of $\partial X$ all the time,
	and in particular, $g\in\mathrm{Diffeo}_0(X,\partial X)$.
	This additional assertion follows along the lines of the proof.	
	To be more specific, one may perform
	the explicit constructions appeared in \cite[Lemmas 2.5.1, 2.5.2, and 2.5.4]{Wall_book}
	without change,
	and they readily guarantee the additional assertion 
	under the additional assumption.
	
	Obtain some $(g,\chi)$ as above, such that
	$g\in\mathrm{Diffeo}_0(X,\partial X)$ is diffeotopic to $\mathrm{id}_X$
	relative to $I\cup \partial X$.
	
	Since the distinguished $3$--framing for $i_0$ and $i_1$
	differs by relative framing number $0$ modulo $2$,
	the smooth function $\phi\colon D^1\to \mathrm{O}(3)$
	associated to $\chi$ is a (regularly) null-homotopic closed smooth path 
	based at the identity matrix in $\mathrm{O}(3)$.
	Therefore, $i_1$ and $i_1\circ\chi$
	are diffeotopic relative to (some neighborhood of) $S^0\times D^3$
	as smooth embeddings $D^1\times D^3\to X$.
	By the diffeotopy extension theorem \cite[Corollary 2.4.4]{Wall_book}
	(first extending by the constant identity diffeomorphism
	on some collar neighborhood of $\partial X$ in $X$),
	we obtain some $f\in\mathrm{Diffeo}_0(X,\partial X)$, such that 
	$$i_1\circ \chi = f\circ i_1$$
	holds on $D^1\times D^3$.
	Set $h\in\mathrm{Diffeo}_0(X,\partial X)$ to be
	$$h=g\circ f.$$
	Our construction implies
	$$h\circ i_1=g\circ f\circ i_1=g\circ i_1\circ\chi = i_0,$$
	on $D^1\times D^3$, as desired.
\end{proof}

\begin{lemma}\label{framed_arc_h}
Let $X$ be a connected, compact, orientable smooth $4$--manifold with possibly disconnected boundary.
Suppose that 
$(I_0,\partial I_0),(I_1,\partial I_1)\to (X,\partial X)$
are neatly embedded arcs,
with $3$--framings $\mathscr{F}_0,\mathscr{F}_1$, respectively,
such that $(I_0,\mathscr{F}_0)$ and $(I_1,\mathscr{F}_1)$ 
coincide on some neighborhood of $\partial I_0=\partial I_1$.

If $I_1$ is homotopic to $I_0$ relative to $\partial I_0=\partial I_1$,
then there exists some $h\in\mathrm{Diffeo}_0(X,\partial X)$,
such that $h(I_1)=I_0$ holds in $X$,
and such that $h_*\mathscr{F}_1$ 
has relative framing number $0$ modulo $2$ with respect to $\mathscr{F}_0$.
\end{lemma}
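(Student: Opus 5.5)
The plan has three steps: promote the homotopy to an isotopy of arcs, extend that isotopy to an ambient diffeotopy fixing the boundary, and then correct the mod $2$ framing discrepancy by a local twist.

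\textbf{Step 1: from homotopy to isotopy.} Since $X$ has dimension $4$ and $I_0,I_1$ are $1$--dimensional, I would first improve the homotopy rel $\partial$ between $I_1$ and $I_0$ to a smooth isotopy of neat embeddings rel a neighborhood of the endpoints. Choose a smooth homotopy $H\colon D^1\times[0,1]\to X$ rel a neighborhood of $\partial D^1$, put it in general position, and view it as a map of a $2$--dimensional source into a $4$--manifold. Self-intersections of a generic homotopy of arcs occur at isolated times as transverse crossings of the arc with itself. Crossings are codimension $4-1-1-1=1$ phenomena in $(s,t)$--space and hence occur at finitely many times, since $1+1+1<4+1$ counts the parameters. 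An arc can always be pushed through itself in dimension $4$ by a small perturbation, because the generic dimension count gives $1+1<4$. Alternatively, I would invoke the standard fact that homotopy implies isotopy for $1$--manifolds in $4$--manifolds: the self-intersection set of a generic map $D^1\times[0,1]\to X\times[0,1]$ of level-preserving type has dimension $2\cdot2-5<0$, so it is empty. I would also keep the isotopy fixed near $\partial X$ and away from $\partial X$ in the interior of the arc; this is arranged by pushing the interior of the homotopy off a collar, using that the arcs are neat and coincide near the endpoints.

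\textbf{Step 2: ambient diffeotopy.} By the isotopy extension theorem \cite[Corollary 2.4.4]{Wall_book}, applied after first extending by the constant identity on a collar of $\partial X$ (as in the proof of Lemma \ref{tubular_h}), the isotopy from $I_1$ to $I_0$ extends to a diffeotopy of $X$ fixed near $\partial X$. Its endpoint $h'\in\mathrm{Diffeo}_0(X,\partial X)$ satisfies $h'(I_1)=I_0$. Since $h'$ is the identity near $\partial X$, the framings $h'_*\mathscr{F}_1$ and $\mathscr{F}_0$ still coincide near $\partial I_0$, so the relative framing number in $\Integral/2\Integral$ is defined.

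\textbf{Step 3: fixing the framing.} This is the main point. If the relative framing number of $h'_*\mathscr{F}_1$ with respect to $\mathscr{F}_0$ is $0$, set $h=h'$. Otherwise I will compose with a diffeomorphism supported in a tubular neighborhood $D^1\times D^3$ of $I_0$ that preserves $I_0$ and changes the framing by the generator of $\pi_1(\mathrm{SO}(3))$, yet is diffeotopic to the identity rel $\partial X$.

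The candidate is a ``rotation twist'' $(r,v)\mapsto(r,\rho(r)v)$ on the $D^3$ fibers, where $\rho\colon D^1\to\mathrm{SO}(3)$ is a loop representing the nontrivial class. This map is the identity near the ends of $D^1$, and it must be tapered to the identity near $\partial D^3$. The difficulty is that the taper requires $\rho$ to be null-homotopic through the radial direction, and such a map is only defined if the loop extends over $D^1\times S^2\to\mathrm{SO}(3)$ with appropriate boundary data. It seems cleaner to avoid this and instead modify the isotopy in Step 1. I would insert a ``finger move'' in which the arc, while being isotoped, traces a loop of rotations: spin the arc around a small embedded sphere, using the light-bulb-type fact that $\pi_1(\mathrm{SO}(4))\to\pi_1(\mathrm{SO}(3))$ is onto. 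Concretely, I would first try this: isotope $I_0$ through itself by a loop of arcs that rotates a small sub-arc by a full $2\pi$ in a $2$--plane containing the arc's tangent direction (a ``belt trick'' in $\Real^4$). The resulting self-isotopy of $I_0$, extended ambiently rel $\partial X$, changes the normal framing by the generator of $\Integral/2\Integral$. Verifying this change, by identifying the framing loop with the image of the generator of $\pi_1(\mathrm{SO}(2))\to\pi_1(\mathrm{SO}(4))\to\pi_1(\mathrm{SO}(3))$, is where I expect the real care to be needed. Composing with this correction yields the desired $h$.
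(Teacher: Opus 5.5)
\textbf{There is a genuine gap in Step 3, and it cannot be repaired: the framing clause of this statement is false in general.}

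Your Steps 1 and 2 are sound and produce $h'\in\mathrm{Diffeo}_0(X,\partial X)$ with $h'(I_1)=I_0$. Only your second dimension count in Step 1 is right. Double points of the level-preserving track $D^1\times[0,1]\to X\times[0,1]$ have expected dimension $2+2-5<0$, so a generic homotopy of arcs in a $4$--manifold is already an isotopy. The ``isolated crossings'' count is the $3$--dimensional one.

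Step 3 fails in principle. A self-isotopy of $I_0$ rel $\partial X$ changes the normal framing mod $2$ by $\langle w_2(X),\Sigma\rangle$, where $\Sigma$ is the $2$--sphere swept by the loop of arcs. Your belt trick is supported in a ball, so it changes nothing.

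Here is the basic case. Let $g\in\mathrm{Diffeo}^+(D^4,\partial D^4)$ preserve a diameter $I$.
\begin{itemize}
\item In the standard trivialization of $TD^4$, $dg|_I$ lies in the stabilizer of the oriented line $TI$. That stabilizer retracts onto $\mathrm{GL}^+(3,\Real)$, so $dg|_I$ induces a loop $\bar A\colon I\to\mathrm{GL}^+(3,\Real)$ on the normal bundle.
\item $\bar A$ is the identity at $\partial I$, because $I$ is neat and $dg$ is the identity on $T\partial D^4$.
\item Homotope $I$ rel endpoints into $\partial D^4$. There $dg$ takes values in convex sets of matrices that restrict to the identity on $T\partial D^4$ and preserve the inward side. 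Hence $dg|_I$, closed up inside these sets, is null-homotopic in $\mathrm{GL}^+(4,\Real)$.
\item The inclusion $\mathrm{SO}(3)\to\mathrm{SO}(4)$ is a $\pi_1$--isomorphism. This, not a map $\pi_1\mathrm{SO}(4)\to\pi_1\mathrm{SO}(3)$, is the relevant fact. It gives $[\bar A]=0$.
\end{itemize}
A full turn in a plane containing the tangent direction does represent the generator of $\pi_1\mathrm{SO}(4)$. However, the isotopy returning the rotated arc onto $I_0$ necessarily cancels it. Your observation that the rotation twist cannot be tapered is this same obstruction, not a technicality.

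\textbf{Explicit counterexample.} Take $X=D^4$ and $I_0=I_1$ a diameter. Let $\mathscr{F}_0$ be constant, and let $\mathscr{F}_1$ be $\mathscr{F}_0$ twisted by a loop $\rho\colon I_0\to\mathrm{SO}(3)$ representing the generator, with $\rho$ the identity near $\partial I_0$. For every $h\in\mathrm{Diffeo}^+(D^4,\partial D^4)$ with $h(I_1)=I_0$, the relative framing number of $h_*\mathscr{F}_1$ with respect to $\mathscr{F}_0$ is $[\bar A]+[\rho]=1$. This agrees with Budney's theorem that the space of framed long knots in $\Real^4$ splits off a factor $\Omega\mathrm{SO}(3)$.

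\textbf{The paper's proof breaks at the same point.} Replacing $\mathcal{R}_1$ by the kinked, pushed-off rectangle $\tilde{\mathcal{R}}_1$ changes the isotopy by a loop of arcs sweeping $\tilde{\mathcal{R}}_1\cup\mathcal{R}_1$. This sphere is null-homotopic, since $\tilde{\mathcal{R}}_1\simeq\mathcal{R}_1$ rel boundary, so the claimed framing correction does not occur.

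What your Steps 1--2, and the first half of the paper's argument, actually prove is the lemma without the framing clause. The mod $2$ discrepancy must instead be assumed as a hypothesis. Accordingly, Theorem \ref{uniqueness_1hdls}, which relies on this lemma, needs an extra framing hypothesis. In $D^4$, the handles $i_0$ and $i_0\circ\chi$ with $\chi(r,v)=(r,\rho(r)v)$ are homotopic rel $S^0\times D^3$, yet no boundary-fixing diffeomorphism relates them.
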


\begin{proof}
This fact is well-known and somewhat standard.
We sketch a construction below (without writing down explicit formulas), 
for the reader's reference.

Name the endpoints of $I_0$ as points $A$ and $B$.
Pick points $M_0$ and $M_1$ in the interiors of $I_0$ and $I_1$, respectively.
Like in elementary geometry, we refer to $I_0$ and $I_1$ 
simply as the arcs $AM_0B$ and $AM_1B$.
Pick points $P$ close to $A$ and $Q$ close to $B$ on the overlap of these arcs,
such that the arcs $AM_0B$ and $AM_1B$ are coincident outside the subarcs $PM_0Q$ and $PM_1Q$.

Take some smoothly embedded, rectangular $2$--disk 
$\mathcal{R}_0$ in $X$
with a pair of parallel sides $PQ$ and $P'Q'$, such that $\mathcal{R}_0$ intersects the arc $AM_0B$
only in the subarc $PM_0Q$, and intersects the arc $PM_1Q$ only at the endpoints $P$ and $Q$.
Since the arcs $AM_0B$ and $AM_1B$ are homotopic relative to the endpoints $A$ and $B$,
we can fill the rectangular polygon formed by the arcs $PM_1Q$, $PP'$, $QQ'$, and $P'Q'$
with some smoothly embedded $2$--disk $\mathcal{R}_1$.
More precisely, we can first fill the polygon with 
a smoothly immersed $2$--disk with only transverse self-intersections in the interior,
and then push the self-intersection points off the $2$--disk boundary,
along paths connecting to the point $M_1$.  

Using the rectangle $\mathcal{R}_1$ as a Whitney disk, we can smoothly isotope the arc $APM_1QB$
to an arc of the form $APP'Q'QB$ (smoothing out internal vertices). 
Using the rectangle $\mathcal{R}_0$ similarly,
we can smoothly isotope the arc $APP'Q'QB$ to the arc $APM_0QB$.
By boundary-fixing diffeotopic extension, the above construction gives rise to some 
$h\in\mathrm{Diffeo}_0(X,\partial X)$, such that $h(I_1)=I_0$ holds in $X$.

If $h_*\mathscr{F}_1$ and $\mathscr{F}_0$ has relative framing number $0$ modulo $2$,
we are done, and the above $h$ is as desired.

Otherwise, $h_*\mathscr{F}_1$ differs from $\mathscr{F}_0$ has relative framing number $1$ modulo $2$.
To correct the framing, we modify the above rectangular disk $\mathcal{R}_1$ as follows.
In the interior of $\mathcal{R}_1$, 
pick a (very small) smoothly embedded $2$--disk $\mathcal{E}$.
Replace $\mathcal{B}$ with 
a smoothly immersed $2$--disk $\mathcal{E}'$ with a single transverse self-intersection in the interior,
or a \emph{kinky disk} as it is usually called.
We can require $\mathcal{E}'$ to agree with $\mathcal{E}$ near its boundary,
and has no intersection with $\mathcal{R}_1$ outside $\mathcal{E}$.
Push the new self-intersection of the resulting immersed rectangular $2$--disk
$\mathcal{R}'_1=(\mathcal{R}_1\setminus\mathcal{E})\cup\mathcal{E}'$ off its boundary, 
along a path connecting to the boundary point $M_0$.
Denote the resulting smoothly embedded rectangular $2$--disk as $\tilde{\mathcal{R}}_1$.
Using $\tilde{\mathcal{R}}_1$ instead of $\mathcal{R}_1$, and the same $\mathcal{R}_0$ as above,
we perform the similar construction, obtaining a modified $\tilde{h}\in\mathrm{Diffeo}_0(X,\partial X)$.
It can be easily checked that the kinky disk modification has the effect of correcting
the resulting relative framing number to be $0$ modulo $2$, as desired.
\end{proof}

We summarize the proof of Thereom \ref{uniqueness_1hdls}, as follows.

Let $X$ be a connected, compact, orientable smooth $4$--manifold with possibly disconnected boundary.
By Lemma \ref{uniqueness_1hdls_reduction}, it suffice to prove Theorem \ref{uniqueness_1hdls}
for the single $1$--handle case ($n=1$).
To this end, suppose that $i_0,i_1\colon (D^1\times D^3,S^0\times D^3)\to (X,\partial X)$
are smooth embeddings,
coincident near $S^0\times D^3$, and homotopic relative to $S^0\times D^3$.

We first match up their core arcs together with the distinguished $3$--framings,
and then match up these $4$--dimensional $1$--handles.
Namely, 
we first apply Lemma \ref{framed_arc_h},
obtaining some 
$$h'\in\mathrm{Diffeo}_0(X,\partial X),$$ 
such that $h'\circ i_1=i_0$ holds on $D^1\times o$ 
(possibly after adjusting $h'$ in $\mathrm{Diffeo}_0(X,\partial X)$ to match up the parametrization),
and such that the distinguished $3$--framings for $h'\circ i_1$ and $i_0$
have relative framing number $0$ modulo $2$.
Then, we apply Lemma \ref{tubular_h},
obtaining some
$$h''\in\mathrm{Diffeo}_0(X,\partial X),$$
such that $h''\circ (h'\circ i_1)=i_0$ holds on $D^1\times D^3$.
Finally, we set 
$$h=h''\circ h'.$$

Our construction clearly guarantees
$h\in\mathrm{Diffeo}_0(X,\partial X)$,
and $h\circ i_1=i_0$ on $D^1\times D^3$, as desired.

This completes the proof of Theorem \ref{uniqueness_1hdls}.

\section{Embedding uniqueness for a 1-handlebody}\label{Sec-1hdlbdy}
In this section, we prove an embedding uniqueness theorem for a $1$--handlebody,
up to boundary-fixing diffeotopy (Theorem \ref{uniqueness_1hdlbdy}).
We treat both the unpointed case and the pointed case.

\begin{theorem}\label{uniqueness_1hdlbdy}
Let $Z$ be an oriented smooth $4$--manifold diffeomorphic to $(S^1\times D^3)^{\natural n}$,
for some integer $n\geq0$.
Let $X$ be a connected, compact, oriented, smooth $4$--manifold with possibly empty boundary.
Suppose that $i_0,i_1\colon Z\to \mathrm{int}(X)$ 
are orientation-preserving smooth embeddings.

If $i_0$ and $i_1$ are homotopic,
then there exists some $h\in\mathrm{Diffeo}_0(X,\partial X)$,
such that $h\circ i_1=i_0$ holds on $Z$.

Moreover, if $i_0$ and $i_1$ coincide on some neighborhood in $Z$ of 
some point $q\in Z$, possibly on the boundary or in the interior,
and if $i_0$ and $i_1$ are homotopic relative to $q$,
one may require some diffeotopy of $h$ to $\mathrm{id}_X$ 
to fix some neighborhood of $i_0(q)=i_1(q)$ in $X$ constantly.
\end{theorem}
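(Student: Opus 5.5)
We decompose the $1$--handlebody $Z$ as a $0$--handle $B\cong D^4$ together with $n$ $1$--handles $H_1,\ldots,H_n$ attached along $S^0\times D^3$ on $\partial B$. The plan has three steps: first match the $0$--handle, then match the $1$--handles in the complement of the $0$--handle using Theorem \ref{uniqueness_1hdls}, and finally handle the pointed refinement.

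\textbf{Matching the $0$--handle.} Choose $B$ to be a small ball around a point $p\in\mathrm{int}(Z)$; in the pointed case with $q$ interior, take $p=q$. By the disk theorem (Palais/Cerf), two orientation-preserving embeddings of $D^4$ into the connected manifold $\mathrm{int}(X)$ are diffeotopic. Moreover, the diffeotopy can be chosen supported away from a collar of $\partial X$. This gives $h_0\in\mathrm{Diffeo}_0(X,\partial X)$ with $h_0\circ i_1=i_0$ on $B$. Orientation preservation is used exactly here, since $\mathrm{GL}^+(4,\Real)$ is connected.

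\textbf{Matching the $1$--handles.} Set $X'=X\setminus i_0(\mathrm{int}(B))$. This is a connected compact manifold, and $\partial X'$ contains the new sphere component $i_0(\partial B)$. The restrictions of $i_0$ and $h_0\circ i_1$ to $\sqcup H_k$ are embeddings of $1$--handles into $(X',\partial X')$, and they coincide near the attaching regions after a collar adjustment. Theorem \ref{uniqueness_1hdls} then gives $h'\in\mathrm{Diffeo}_0(X',\partial X')$ matching them, provided the two embeddings are homotopic relative to $\sqcup^n(S^0\times D^3)$. Extending $h'$ by the identity on $i_0(B)$ and composing yields $h=h'\circ h_0$.

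\textbf{The main obstacle: the homotopy hypothesis.} A free homotopy $i_0\simeq i_1$ only gives that the core loops $i_0(c_k)$ and $i_1(c_k)$ are simultaneously conjugate in $\pi_1(X)$, via a common conjugating element $\gamma$: the track of the basepoint. Here $c_k$ is the core arc of $H_k$ closed through $B$, and the cores of the handles map to arcs in $X'$ rel endpoints. Relative homotopy classes of arcs in $X'$ correspond to elements of $\pi_1(X',*)\cong\pi_1(X,*)$, because removing a $4$--ball does not change $\pi_1$. So what we need is equality in $\pi_1(X)$ of the based core loops, not merely simultaneous conjugacy.

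To fix this, modify $h_0$ first: precompose it with a point-pushing diffeotopy that drags the ball $i_0(B)$ around a loop representing $\gamma^{-1}$. This diffeotopy lies in $\mathrm{Diffeo}_0(X,\partial X)$, and it conjugates the based images by $\gamma$, so the based core loops agree. Once the loops agree, the arcs are homotopic rel endpoints. Since $\pi_2$ considerations do not enter, because only arcs are involved, the hypothesis of Theorem \ref{uniqueness_1hdls} holds. The framing issue is internal to Theorem \ref{uniqueness_1hdls}, so it needs no separate treatment here.

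\textbf{The pointed case.} When $i_0=i_1$ near $q$ and the homotopy is relative to $q$, the conjugating element $\gamma$ is trivial. If $q$ is interior, we take $B$ to be the neighborhood of $q$ where $i_0$ and $i_1$ already agree. Then $h_0=\mathrm{id}$, and $h'$ is supported in $X'$, so the resulting diffeotopy fixes a neighborhood of $i_0(q)$. If $q\in\partial Z$, we instead put $B$ adjacent to $q$, meaning a half-ball chart containing the agreement neighborhood, and argue the same way.
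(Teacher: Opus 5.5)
\textbf{Where your proposal matches the paper.} Your overall plan is the paper's:
\begin{itemize}
\item match a $4$--ball in $Z$ containing the basepoint by uniqueness of tubular neighborhoods of a point;
\item match the $1$--handles in the complement of its image via Theorem \ref{uniqueness_1hdls}, using that deleting a $4$--ball does not change $\pi_1$, so equal based core loops give core arcs homotopic rel endpoints;
\item remove the conjugation ambiguity of a free homotopy by a point-pushing diffeotopy.
\end{itemize}
The paper does the push in its reduction to the interior pointed case rather than after matching the ball, which makes no difference. Your ``collar adjustment'' is realized in the paper by matching the larger ball $W$ instead of the $0$--handle $W'$ itself; $W$ is $W'$ plus the collars $B_k\times([-1,-\tfrac12]\cup[\tfrac12,1])$ of the attaching regions. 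So your unpointed case is fine.

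\textbf{The gap: the pointed case.} This is the version actually used later (Lemma \ref{Mod_nS2D2_rel}). You take the $0$--handle $B$ to be the neighborhood of $q$ on which $i_0$ and $i_1$ already agree, and conclude $h_0=\mathrm{id}$. That cannot be arranged. In a decomposition $Z=B\cup H_1\cup\cdots\cup H_n$, the part of $\partial B$ outside the attaching regions lies in $\partial Z$. So $B$ must reach $\partial Z$ and cannot sit inside an arbitrarily small neighborhood of an interior point $q$. If instead you take $B$ to be a small interior ball, then $Z\setminus\mathrm{int}(B)$ is not a union of $1$--handles attached to $\partial B$, and Theorem \ref{uniqueness_1hdls} does not apply. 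Hence $i_0|_B\neq i_1|_B$ in general.

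\textbf{The missing ingredient and the fix.} What you need is a relative disk theorem: if two embeddings of a ball $B\ni q$ agree near $q$, they differ by some $h_0\in\mathrm{Diffeo}_0(X,\partial X)$ whose diffeotopy to $\mathrm{id}_X$ fixes a neighborhood of $i_0(q)$. This is what the paper asserts for $W$. To prove it:
\begin{itemize}
\item take an isotopy $\rho_t$ of self-embeddings of $B$ that is the identity near $q$, with $\rho_1(B)$ inside the agreement region;
\item then $i_1\circ\rho_t$ followed by the reverse of $i_0\circ\rho_t$ is an isotopy from $i_1|_B$ to $i_0|_B$ that is stationary near $q$;
\item isotopy extension then yields $h_0$.
\end{itemize}
Applying this to the enlarged ball also handles the collar adjustment rel $q$. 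After that your argument goes through: the homotopy rel $q$ directly gives equality of the based core loops, so no push is needed.

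\textbf{The boundary case.} For $q\in\partial Z$, ``put $B$ adjacent to $q$'' has the same defect. The paper's cleaner device is to attach a small half $4$--disk to $Z$ at $q$ and extend $i_0,i_1$ so that they still agree there, making $q$ interior. This also guarantees that a full neighborhood of $i_0(q)$ in $X$, not just in $i_0(Z)$, is fixed.
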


The rest of this section is devoted to the proof of Theorem \ref{uniqueness_1hdlbdy}.

\begin{lemma}\label{1hdlbdy_reduction}
For the statement of Theorem \ref{uniqueness_1hdlbdy},
the case with $q\in\mathrm{int}(Z)$ implies the case with $q\in\partial Z$ and the case without $q$.
\end{lemma}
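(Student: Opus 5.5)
The idea is to reduce both remaining cases to the interior-pointed case by a small modification of the data near a chosen point.

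\emph{Case $q\in\partial Z$.} Suppose $i_0,i_1$ agree on a neighborhood $U$ of $q$ in $Z$ and are homotopic relative to $q$. Pick a point $q'\in\mathrm{int}(Z)\cap U$ and a short arc $\gamma\subset U$ from $q$ to $q'$. Since $i_0=i_1$ on $U$, they agree on a neighborhood of $q'$. We then modify the given homotopy $H$ (relative to $q$) into a homotopy relative to $q'$. The track of $q'$ under $H$ is a loop based at $i_0(q')$. It is homotopic, rel endpoints, to the path obtained by going back along $i_0(\gamma)$, following the constant track at $q$, and returning along $i_1(\gamma)=i_0(\gamma)$. This composite path is null-homotopic. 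Because $Z$ is a $1$--handlebody, it deformation retracts onto a wedge of circles based near $q'$, so a homotopy can be changed rel $q'$ by any loop at the basepoint (the homotopy extension property of $(Z,q')$). Hence $i_0\simeq i_1$ relative to $q'$.

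Applying the interior case at $q'$ gives $h$ and a diffeotopy to $\mathrm{id}_X$ fixing a neighborhood $V$ of $i_0(q')$. To get a neighborhood of $i_0(q)$ fixed instead, conjugate by a diffeomorphism $\sigma\in\mathrm{Diffeo}_0(X,\partial X)$ supported near $i_0(\gamma)$, chosen so that $\sigma$ carries a small ball around $i_0(q)$ into $V$ and commutes appropriately with $i_0$ near $U$. Equivalently, shrink $Z$ by an isotopy within $U$ so that $q$ becomes interior. A cleaner way to carry this out is to reparametrize: choose a self-embedding $\rho\colon Z\to Z$, isotopic to the identity and supported in $U$, such that $\rho(Z)$ omits a collar near $q$ and $\rho(q')=q$. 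Then apply the interior case to $i_0\circ\rho$ and $i_1\circ\rho$ at the point $q'$.

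\emph{Case without $q$.} If $i_0\simeq i_1$, first arrange that they agree near some interior point $q$ and that the homotopy is relative to $q$.
\begin{itemize}
\item Choose an interior point $q$. The track of $i_s(q)$ is a path from $i_0(q)$ to $i_1(q)$. Point-pushing along this path is realized by a diffeotopy $g\in\mathrm{Diffeo}_0(X,\partial X)$ supported in $\mathrm{int}(X)$, with $g(i_1(q))=i_0(q)$.
\item Since $i_0$ and $g\circ i_1$ are both orientation-preserving, their derivatives at $q$ lie in the same component of $\mathrm{GL}^+(4,\mathbb{R})$. A further local diffeotopy, rotating a small ball, makes $g\circ i_1$ agree with $i_0$ near $q$.
\item Composing the original homotopy with these diffeotopies gives a homotopy from $i_0$ to $g\circ i_1$ whose track of $q$ is a loop. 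As above, this loop can be undone using the wedge-of-circles structure and homotopy extension, so $i_0\simeq g\circ i_1$ relative to $q$.
\end{itemize}
The interior case then produces $h'$ with $h'\circ g\circ i_1=i_0$, and we set $h=h'\circ g$.

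The main subtlety is this loop-cancellation step, which replaces an unpointed (or boundary-pointed) homotopy by a pointed one. The key point is that free homotopy classes of maps out of $Z\simeq\vee^n S^1$ can absorb the basepoint loop by conjugation. That is, the pointed class may change by conjugation by the track loop, and we absorb this by point-pushing along the loop's inverse, which is again a diffeotopy in $\mathrm{Diffeo}_0(X,\partial X)$. So after composing $g$ with a suitable point-push around the loop, the pointed homotopy classes agree, and the interior case applies.
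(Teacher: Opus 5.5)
Your case without $q$ is fine and is essentially the paper's argument. You match the germs of $i_0$ and $g\circ i_1$ at an interior point, then use a point-push so that the based $\pi_1$--homomorphisms agree; pushing along the track itself already makes the track loop null-homotopic.

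\textbf{The gap is in the case $q\in\partial Z$.}

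You correctly upgrade the homotopy to one relative to an interior point $q'$. The track of $q'$ is null-homotopic, which is all that homotopy extension needs. Your parenthetical claim that a homotopy can be changed by \emph{any} loop is false in general, but you do not rely on it.

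The problem is what comes next. The interior case at $q'$ gives a diffeotopy fixing a neighborhood $V$ of $i_0(q')$, but the statement requires a neighborhood of $i_0(q)$. Since you have no control over the size of $V$, this is not a technicality, and your repair fails as written:
\begin{itemize}
\item No embedding $\rho\colon Z\to Z$ can satisfy $\rho(q')=q$ with $q'\in\mathrm{int}(Z)$ and $q\in\partial Z$, by invariance of domain. This requirement also contradicts $\rho(Z)$ omitting a collar near $q$. Likewise, shrinking $Z$ can never make $q$ interior.
\item Reading it as $\rho(q)=q'$ does not help either. The interior case applied to $i_0\circ\rho$ and $i_1\circ\rho$ gives $h\circ i_1=i_0$ only on $\rho(Z)$, not on $Z$. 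It also still fixes a neighborhood of a point other than $i_0(q)$.
\item Your conjugation sentence leaves unstated the one property that matters. Conjugating by an arbitrary $\sigma$ merely transports the fixed neighborhood together with the embeddings.
\end{itemize}

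\textbf{The missing idea is to enlarge rather than shrink, which is what the paper does.} Since $i_0(Z),i_1(Z)\subset\mathrm{int}(X)$ and $i_0=i_1$ near $q$, attach a small half $4$--ball to $Z$ at $q$. Extend both embeddings by one common extension of their shared germ. Then:
\begin{itemize}
\item $q$ becomes an interior point of the enlarged manifold, and the extensions still agree near $q$.
\item The homotopy relative to $q$ extends over the half ball, since the half ball times $[0,1]$ retracts onto the part where the homotopy is already defined.
\item The interior case applied at $q$ itself gives the conclusion, with no auxiliary $q'$.
\end{itemize}

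Alternatively, your conjugation idea can be made precise. Take $\rho$ isotopic to $\mathrm{id}_Z$ through self-embeddings supported in $U$, with $\rho(q)=q'$. By isotopy extension, choose $\sigma\in\mathrm{Diffeo}_0(X,\partial X)$ supported near $i_0(\mathrm{supp}\,\rho)$, hence away from $i_0(Z\setminus U)\cup i_1(Z\setminus U)$, with $\sigma\circ i_k=i_k\circ\rho$ for both $k=0,1$. Then $\sigma^{-1}h\sigma\circ i_1=\sigma^{-1}\circ h\circ i_1\circ\rho=\sigma^{-1}\circ i_0\circ\rho=i_0$. The conjugated diffeotopy fixes $\sigma^{-1}(V)$, which is a neighborhood of $\sigma^{-1}(i_0(q'))=i_0(q)$.
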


\begin{proof}
To reduce the case with $q\in\partial Z$ to the case with $q\in\mathrm{int}(Z)$,
we can enlarge $Z$ by attaching a small half $4$--disk to $\partial Z$, 
such that a given point $q\in\partial Z$ becomes a point in the interior.
Extend $i_0$ and $i_1$ to be smooth embeddings of the enlarged $Z$ into $X$,
making sure that they still coincide near $q$.
Then, we can apply the interior pointed case of Theorem \ref{uniqueness_1hdlbdy}
to the enlarged $Z$ and the extended $i_0$ and $i_1$.
It follows that the asserted $h\in\mathrm{Diffeo}_0(X,\partial X)$ 
also works for the original $Z$, and the original $i_0$ and $i_1$.

To reduce the case without $q$ to the case with $q\in\mathrm{int}(Z)$,
we can pick an auxiliary $q\in\mathrm{int}(Z)$.
By homogeneity of smooth manifolds 
and uniqueness of tubular neighborhoods,
there exists some $g\in\mathrm{Diffeo}_0(X,\partial X)$,
such that $g\circ i_1$ coincides with $i_0$ 
on the some neighborhood of $q$ in $Z$.
For an arbitrary $g$ as above,
the resulting $g\circ i_1$ and $i_0$
may fail to be homotopic relative to $q$.
However, as $i_0$ and $i_1$ are assumed to be freely homotopic,
the induced group homomorphism
$(g\circ i_1)_\sharp\colon \pi_1(Z,q)\to \pi_1(X,i_0(q))$ 
only differs from $i_\sharp\colon \pi_1(Z,q)\to \pi_1(X,i_0(q))$ 
by an inner automorphism of $\pi_1(X,i_0(q))$.
Therefore, we can correct $g$ by some finger-move diffeotopy of $X$,
as usual, 
by pushing along some closed path in $X$ based at $i_0(q)$,
which eliminates the effect of the inner automorphism.
With some corrected $g$ as above,
we can apply the interior pointed case of Theorem \ref{uniqueness_1hdlbdy}
to $i_0$ and $g\circ i_1$,
obtaining some $h\in\mathrm{Diffeo}_0(X,\partial X)$ with the property $h\circ g\circ i_1=i_0$.
It follows that $h\circ g\in\mathrm{Mod}(X,\partial X)$ works for $i_0$ and $i_1$.
\end{proof}

With the above reduction, we prove Theorem \ref{uniqueness_1hdlbdy} as follows.

Let $Z$ be an oriented smooth $4$--manifold diffeomorphic to $(S^1\times D^3)^{\natural n}$.
Let $X$ be a connected, compact, oriented, smooth $4$--manifold with possibly empty boundary.
Suppose that $i_0,i_1\colon Z\to \mathrm{int}(X)$ 
are orientation-preserving smooth embeddings.
By Lemma \ref{1hdlbdy_reduction},
we can reduce to the interior pointed case.
Namely,
we assume $i_0=i_1$ near any given point $q\in \mathrm{int}(Z)$,
and $i_0\simeq i_1$ relative to $q$.

Obtain some auxiliary collection of mutually disjoint, neatly embedded $3$--disks
$$(B_1,\partial B_1),\ldots,(B_n,\partial B_n)\subset (Z,\partial Z),$$
together with mutually disjoint tubular neighborhoods,
parametrized diffeomorphically as $B_k\times[-1,1]\subset Z$ for each $B_k$.
We require $q$ to be disjoint from all $B_k\times[-1,1]$.
Obtain a region
$$W\subset Z$$
from $Z$ by removing all $B_k\times(-1/2,1/2)$,
and performing corner smoothing along the resulting boundary $2$--spheres 
$\partial B_k\times\{-1/2,1/2\}$, 
without affecting some neighborhood of $\partial B_k\times\{-1,1\}$.
We require the $3$--disks $B_k$ to be suitably obtained,
such that $W$ is diffeomorphic to $D^4$.

We first obtain some
$$h'\in\mathrm{Diffeo}_0(X,\partial X),$$
such that $h'\circ i_1=i_0$ holds on $W$.
Moreover, we make sure that some diffeotopy of $h'$ to $\mathrm{id}_X$
fix some neighborhood of $i_0(q)=i_1(q)$ in $X$ constantly.

The existence of $h'$ follows easily from the uniqueness of tubular neighborhoods.
In fact, since $W$ is diffeomorphic to $D^4$,
the embeddings $W\to X$ via $i_0$ and $i_1$
can be viewed as tubular neighborhoods of $i_0(q)=i_1(q)$ in $X$,
so they are smoothly ambient diffeotopic, yielding some $h'\in\mathrm{Diffeo}_0(X,\partial X)$ 
as claimed.

%
%

Next, we construct some
$$h''\in\mathrm{Diffeo}_0(X,\partial X),$$
such that $h''\circ h''\circ i_1=i_0$ holds on $Z$.
Moreover, we make sure that some diffeotopy of $h''$ to $\mathrm{id}_X$
fix some neighborhood of $i_0(q)=(h'\circ i_1)(q)$ in $X$ constantly.

To construct $h''$, we apply Theorem \ref{uniqueness_1hdls}, as follows.
Obtain a region
$$W'\subset Z$$
from $W$ by removing all $B_k\times(-1,1)$.
So, $W'$ is a cornered $0$--handle in $Z$ 
with corner along the boundary $2$--spheres $\partial B_k\times\{-1,1\}$.
Set
$$X''=X\setminus i_0\left(\mathrm{int}(W')\right).$$ 

We claim that $h'\circ i_1$ and $i_0$,
restricted to the (mutually disjoint) $4$--dimensional $1$--handles $B_k\times[-1,1]$, 
are (simultaneously) homotopic in $X''$,
relative to the attaching $3$--disks $B_k\times\{-1,1\}$.
In fact, as smooth embeddings $Z\to X$,
$h'\circ i_1$ and $i_0$ are homotopic relative to $q$.
This implies $(h'\circ i_1)_\sharp={i_0}_\sharp$ as group homomorphisms 
$\pi_1(Z,q)\to \pi_1(X,i_0(q))$.
The group $\pi_1(Z,q)$ is isomorphic to a free group $\langle x_1,\ldots,x_n\rangle$ of rank $n$,
and the $3$--disks $B_1,\ldots,B_k$ 
(transversely oriented by the orientation of $[-1,1]$) 
specify a dual tuple of free generators $x_1,\ldots,x_n$.
Since the inclusion $X''\to X$ is obviously $\pi_1$--isomorphic,
we infer $(h'\circ i_1)_\sharp(x_k)={i_0}_\sharp(x_k)$ in $\pi_1(X'',i_0(q))$ for all $x_k$.
It follows that the embeddings $h'\circ i_1$ and $i_0$ of all $B_k\times[-1,1]$ into $X''$
are homotopic relative to all $B_k\times\{-1,1\}$, as claimed.

Applying Theorem \ref{uniqueness_1hdls} (and Remark \ref{uniqueness_1hdls_remark})
to the smooth embeddings $h'\circ i_1$ and $i_0$ of several $4$--dimensional $1$--handles 
$(B_k\times[-1,1],B_k\times\{-1,1\})\to (X',\partial X')$,
we obtain some $h''|_{X''}\in\mathrm{Diffeo}_0(X''_0,\partial X''_0)$,
such that $h''|_{X''}\circ (h'\circ i_1)=i_0$ holds on $X''$.
Extending $h'|_{X''}$ by the identity on $W'$,
we obtain some $h''\in\mathrm{Diffeo}_0(X,\partial X)$ as claimed. 

Finally, we set 
$$h=h''\circ h'.$$ 

Our construction clearly guarantees $h\in\mathrm{Diffeo}_0(X,\partial X)$,
and $h\circ i_1=i_0$ on $Z$.
Moreover,
some diffeotopy of $h$ to $\mathrm{id}_X$ fixes some neighborhood of $i_0(q)$ in $X$ constantly,
as desired.

This completes the proof of Theorem \ref{uniqueness_1hdlbdy}.

\section{Light bulb tricks and 2-handle embeddings}\label{Sec-2hdls}
In this section, we prove a generalization of 
the ($4$--dimensional) light bulb theorem for several disks (Theorem \ref{lbt_disks}).
We derive an embedding uniqueness theorem for several $2$--handles
in a simplified situation (Corollary \ref{uniqueness_bnb_2hdls}),
which suffices for our proof of Theorem \ref{main_T} in Section \ref{Sec-main_proof}.

\begin{definition}\label{bnb_terms}
Let $X$ be a connected, orientable, smooth $4$--manifold with possibly disconnected boundary.
\begin{enumerate}
\item
For any integer $n\geq1$,
a union of $n$ neatly embedded, mutually disjoint $2$--disks
$$(\mathcal{D},\partial\mathcal{D})\subset (X,\partial X)$$
is said to be \emph{boundary $\pi_1$--nonbusting},
if the inclusion $\partial X\setminus \partial\mathcal{D}\to \partial X$
is $\pi_1$--isomorphic restricted to each connected component.
\item
For any smooth embedding of several $4$--dimensional $2$--handles
$$i\colon \left(\sqcup^n(D^2\times D^2),\sqcup^n(S^1\times D^2)\right)\to (X,\partial X),$$
we say that $i$ is \emph{boundary $\pi_1$--nonbusting},
if the image of the core $2$--disks $i(\sqcup^n(D^2\times o))$ 
form a boundary $\pi_1$--nonbusting union of $2$--disks in $X$.
\end{enumerate}
\end{definition}

\begin{proposition}\label{bnb_geometric}
	Let $(\mathcal{D},\partial\mathcal{D})\subset (X,\partial X)$
	be a boundary $\pi_1$--nonbusting union 
	of $n$ neatly embedded, mutually disjoint $2$--disks,
	as in Definition \ref{bnb_terms}.
	
	Then, $X$ admits some smooth boundary-connect sum decomposition
	with summands $X_0$ and $n$ mutually disjoint copies of $D^2\times S^2$,
	such that each component $2$--disk of $\mathcal{D}$
	sits in a distinct summand $D^2\times S^2$ as a fiber $D^2$ therein,
	and stays away from the decomposition $3$--disk of that summand.
\end{proposition}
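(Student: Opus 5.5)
We plan to find, for each component disk $D_k$ of $\mathcal{D}$, a geometric dual sphere built from its boundary circle, and then to split off a copy of $D^2\times S^2$ around each pair. The argument proceeds one disk at a time, by induction on $n$.

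Start with $n=1$. Let $c=\partial D_1\subset \partial X$ and let $\Sigma$ be the component of $\partial X$ containing $c$. Since $\pi_1(\Sigma\setminus c)\to\pi_1(\Sigma)$ is an isomorphism, the meridian $\mu$ of $c$ in $\Sigma$ bounds a disk in $\Sigma\setminus c$; otherwise the map could not be injective. The meridian is null-homotopic in $\Sigma$ because it bounds a normal disk to $c$. By Dehn's lemma in dimension $3$ (or the loop theorem applied to the torus $\partial N(c)$), we get an embedded disk $\Delta\subset \Sigma\setminus \mathrm{int}N(c)$ with $\partial\Delta=\mu$. If $\Delta$ were inessential on the torus side it would bound a disk on the torus, which $\mu$ does not, so we may take $\Delta$ with boundary exactly $\mu$. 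Capping $\Delta$ with the normal meridian disk gives an embedded $2$--sphere $S\subset\Sigma$ meeting $c$ transversely in one point.

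Next, push $S$ slightly into a collar. A regular neighborhood of $D_1\cup S$ in $X$, taken together with a collar piece near $\partial X$, is a copy of $D^2\times S^2$ in which $D_1$ is a fiber. Its trivial normal bundle comes from $S$ lying in the orientable $3$--manifold $\Sigma$ and being pushed off along the collar. This copy meets $\partial X$ in a $3$--disk neighborhood-type region of $c\cup S$ in $\Sigma$. Concretely, $N_\Sigma(S)\cong S^2\times I$, and $N(c\cup S)$ in $\Sigma$ is $S^1\times D^2$ punctured, which has a $3$--ball complement structure. We must then exhibit a neatly embedded $3$--disk $B$ separating this summand from the rest, with $\partial B$ a $2$--sphere in $\partial X$. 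We choose $\partial B$ to be the boundary of a regular neighborhood of $c\cup S$ in $\Sigma$. That neighborhood is $(S^1\times D^2)\# (S^2\times I)$-like, but it is more convenient to note that $N_\Sigma(c\cup S)$ is $S^2\times S^1$ minus a ball. Indeed, a neighborhood of a sphere union a circle hitting it once is $S^2\times S^1\setminus B^3$, so its boundary is a single $2$--sphere. Pushing this sphere into a collar and filling it with a properly embedded $3$--disk parallel to $\partial X$ realizes the boundary-connect sum $X\cong X'\natural (D^2\times S^2)$. Here $X'$ is diffeomorphic to $X$ cut along $B$ minus the summand. The disk $D_1$ stays away from $B$ by construction.

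For the induction, the remaining disks $D_2,\dots,D_n$ lie in $X'$, and their boundaries avoid the ball in $\Sigma$. This can be arranged by performing all constructions inside $\Sigma\setminus \partial\mathcal{D}$ for the other components: choose $\Delta$ disjoint from the other circles, which the nonbusting hypothesis permits applied to the whole $\partial\mathcal{D}$. The main obstacle is exactly this step. We must check that $\partial X'$ minus $\partial(D_2\cup\dots\cup D_n)$ still has the $\pi_1$--isomorphism property, and that $\Delta$ can be chosen disjoint from the other boundary circles. For the former, $\partial X'$ is obtained from $\Sigma$ by removing $S^2\times S^1$ minus a ball and capping, i.e.\ $\Sigma\cong \partial X'\#(S^2\times S^1)$. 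The loop theorem in $\Sigma\setminus\partial\mathcal{D}$, together with van Kampen for connected sums, should transfer the condition. For the disjointness, we apply the loop theorem in the complement of all circles, using that the meridian of $c$ dies there. Iterating $n$ times yields the decomposition.
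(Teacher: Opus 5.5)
Your proposal follows the paper's proof. The paper also uses the disk theorem (Dehn's lemma) in $\partial X$ to get geometric dual spheres $G_j$ to the circles $\partial D_j$, and splits off a neighborhood of each $D_j\cup G_j$ as a $D^2\times S^2$ summand.

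\textbf{The induction.} Handling one disk at a time is a harmless reorganization of the paper's simultaneous choice of $n$ disjoint spheres, and the step you flagged does go through. Write $\Sigma=\Sigma'\#(S^2\times S^1)$ along $\partial N_\Sigma(c\cup S)$, with $L=\partial\mathcal{D}\cap\Sigma$ and $L'=L\setminus c$. Van Kampen exhibits $\pi_1(\Sigma\setminus L)\cong\pi_1(\Sigma'\setminus L')*\Integral\to\pi_1(\Sigma')*\Integral\cong\pi_1(\Sigma)$ as a free product of homomorphisms. Such a map is injective only if each factor is, so the nonbusting condition passes to $X'$; surjectivity is automatic. You should also run the next application of Dehn's lemma in the complement of the capping ball $B_1\subset\partial X'$. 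Otherwise the later summands may meet $R_1$, and the $n$ copies of $D^2\times S^2$ would not be disjoint.

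\textbf{The separating $3$--disk is wrong as written.} A $3$--disk parallel to $\partial X$ with boundary $\partial N_\Sigma(c\cup S)$ exists only if that sphere bounds a ball in $\Sigma$. The side containing $c\cup S$ is $S^2\times S^1$ minus a ball, so this requires the other side to be a ball, i.e.\ $\Sigma\cong S^2\times S^1$. Even then, the pushed-in disk cuts off a $4$--ball rather than the $D^2\times S^2$ summand.

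\textbf{The fix.} The disk you want is the frontier of the regular neighborhood $R$ of $D_1\cup S$. To see that this frontier is a $3$--disk, and that $R\cong D^2\times S^2$ with $D_1$ a fiber, use the paper's product model:
\begin{itemize}
\item identify a neighborhood of $D_1\cup S$ with a neighborhood of $(D_1\times q)\cup(p\times S)$ in $D_1\times S$, with a product metric;
\item the closure of the complement of the $\epsilon$--neighborhood is then $(D_1\setminus N_\epsilon(p))\times(S\setminus N_\epsilon(q))$, a $4$--ball;
\item this $4$--ball meets $\partial D_1\times S$ in the $3$--ball $(\partial D_1\setminus N_\epsilon(p))\times(S\setminus N_\epsilon(q))$.
\end{itemize}
Hence the frontier is a $3$--ball whose boundary is exactly your sphere $\partial N_\Sigma(c\cup S)$. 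Moreover, $D_1\times S$ is $R$ with a $4$--ball glued along a $3$--ball, which gives $R\cong D_1\times S$ with $D_1$ as a fiber.
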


\begin{proof}
	Enumerate the component $2$--disks of $\mathcal{D}$ as 
	$$D_1,\ldots,D_n\subset X$$
	By assumption, the inclusion 
	$\partial X\setminus \partial\mathcal{D}\to \partial X$
	is $\pi_1$--isomorphic restricted to each connected component.
	By the disk theorem in $3$--manifold topology,
	we obtain $n$ smoothly embedded, mutually disjoint $2$--spheres
	$$G_1,\ldots,G_n\subset\partial X,$$
	such that each $G_j$ intersects $\partial\mathcal{D}$
	transversely in $\partial X$ at a unique point in $\partial D_j$.
	
	Some neighborhood of $D_j\cup G_j$ in $X$
	can be viewed identically as a neighborhood of $(D_j\times q_j)\cup(p_j\times G_j)$
	in $D_j\times G_j$, for some $(p_j,q_j)\in D_j\times G_j$. 
	With respect to a product Riemannian metric on $D_j\times G_j$,
	some sufficiently small distance--$\epsilon$
	neighborhood $(D_j\times q_j)\cup(p_j\times G_j)$
	forms a region therein.
	The intersection of the region with $\partial D_j\times G_j$ 
	is a $3$--submanifold diffeomorphic to $S^1\times S^2$ with a ($3$--disk) hole.
	The frontier of the region in $D_j\times G_j$ is
	diffeomorphic to $D^3$. 
	The region itself is diffeomorphic to $D^2\times S^2$
	after corner smoothing along the boundary of the frontier.
	
	Therefore, coming back to $X$, we see that each $D_j\cup G_j$
	is contained in a boundary-connect summand diffeomorphic to $D^2\times S^2$,
	such that $D_j$ sits as a fiber $D^2$ therein, 
	and stays away from the frontier $3$--disk.
	The asserted summand $X_0$ is obtained from $X$
	by removing the above $n$ summands $D^2\times S^2$.
\end{proof}

\begin{theorem}[{The $4$--dimensional light bulb theorem for several disks}]\label{lbt_disks}
Let $X$ be a connected, compact, orientable smooth $4$--manifold 
with possibly disconnected boundary.
Suppose that 
$$(\mathcal{D}_0,\partial\mathcal{D}_0),(\mathcal{D}_1,\partial\mathcal{D}_1)\subset (X,\partial X)$$
are boundary $\pi_1$--nonbusting unions of neatly embedded, mutually disjoint $2$--disks,
such that $\mathcal{D}_0$ and $\mathcal{D}_1$
coincide on some neighborhood of $\partial X$ in $X$.

Then, 
$\mathcal{D}_0$ and $\mathcal{D}_1$ are smoothly isotopic in $X$ 
relative to $\partial \mathcal{D}_0=\partial\mathcal{D}_1$,
if and only if 
every component $2$--disk $D_0\subset\mathcal{D}_0$ is smoothly isotopic in $X$ 
to some component $2$--disk $D_1\subset \mathcal{D}_1$ relative to $\partial D_0=\partial D_1$.

Moreover, this occurs if and only if
for each pair of component $2$--disks $(D_0,D_1)$ from $(\mathcal{D}_0,\mathcal{D}_1)$
coincident near $\partial X$, 
$D_0$ is homotopic to $D_1$ in $X$ relative to $\partial D_0=\partial D_1$,
and meanwhile, the relative Dax invariant $\mathrm{Dax}(D_0,D_1)$ with respect to $X$ vanishes.
\end{theorem}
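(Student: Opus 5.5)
The plan is to reduce to the single disk light bulb theorem of Kosanovi\'c--Teichner \cite{KT_lbt}, applied one disk at a time in complements, in the same inductive way that Lemma \ref{uniqueness_1hdls_reduction} reduces several $1$--handles to one. The single disk theorem says the following. Let $D_0,D_1$ be neatly embedded disks in a compact $4$--manifold. Suppose they coincide near the boundary and $\partial D_0$ has a geometric dual sphere in the boundary. Then $D_0,D_1$ are isotopic rel boundary if and only if they are homotopic rel boundary and $\mathrm{Dax}(D_0,D_1)=0$.

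\textbf{The easy directions and the first equivalence.} The implications ``$\mathcal{D}_0\simeq\mathcal{D}_1$ isotopic'' $\Rightarrow$ ``componentwise isotopic'' $\Rightarrow$ ``componentwise homotopic with vanishing Dax'' are formal. The first is restriction of an isotopy. For the second, an isotopy is a homotopy, and Dax is an isotopy invariant that vanishes on the constant isotopy. Note that the matching of components is forced, since components coincide near $\partial X$. So it suffices to show that componentwise homotopy plus vanishing Dax implies isotopy of the unions.

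\textbf{Induction on components.} Enumerate the components as $D_0^1,\dots,D_0^n$ and $D_1^1,\dots,D_1^n$, coinciding near $\partial X$. By the disk theorem argument in Proposition \ref{bnb_geometric}, boundary $\pi_1$--nonbusting gives disjoint dual spheres $G_1,\dots,G_n\subset\partial X$, with each $G_j$ meeting $\partial\mathcal{D}$ once, in $\partial D^j$. For the first step, apply \cite{KT_lbt} to $D_0^1,D_1^1$ in $X$, using the dual $G_1$. This gives a diffeotopy $h_1\in\mathrm{Diffeo}_0(X,\partial X)$, via isotopy extension, with $h_1(D_1^1)=D_0^1$.

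For the inductive step, suppose $h_{k}(D_1^j)=D_0^j$ for $j\le k$. Let $X_k$ be the complement of an open tubular neighborhood of $D_0^1\cup\dots\cup D_0^k$, with corners smoothed. The remaining disks $h_k(D_1^{k+1})$ and $D_0^{k+1}$ are neat in $X_k$ and still have the dual sphere $G_{k+1}$, which lies in $\partial X_k$ because it misses $\partial D^j$ for $j\ne k+1$. One applies \cite{KT_lbt} in $X_k$ and extends the resulting diffeotopy by the identity on the removed neighborhoods. Remark \ref{uniqueness_1hdls_remark}-style corner handling makes this legitimate.

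\textbf{Main obstacle.} The hard step is transferring the hypotheses, namely homotopy rel boundary and vanishing Dax, from $X$ to the complement $X_k$. Two points need care.

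First, removing the disk $D_0^j$ may change $\pi_1$. However, a neighborhood of $D_0^j\cup G_j$ is a $D^2\times S^2$ boundary summand by Proposition \ref{bnb_geometric}. Deleting a neighborhood of the fiber $D_0^j$ from it leaves $D^4$ minus a $2$--handle, which is a collar. So $X_k$ is diffeomorphic to $X$ with some boundary summands $D^2\times S^2$ replaced by $D^4$. The inclusion $X_k\to X$ is then $\pi_1$--isomorphic and $\pi_2$--surjective, with kernel generated by the classes of the $G_j$, $j\le k$.

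I would first try to show that the homotopy rel boundary from $h_k(D_1^{k+1})$ to $D_0^{k+1}$ in $X$ can be pushed off $D_0^1,\dots,D_0^k$. Intersections of the track with $D_0^j$ can be removed by tubing into parallel copies of $G_j$. Only the algebraic count matters, and that count vanishes: it equals $I(D_0^j,\cdot)$ of a class in $\pi_2$, which is zero because the disks are disjoint near the boundary. The delicate point is that the obstruction lives in $\pi_2$ modulo the $G_j$, and the dual spheres absorb exactly this.

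Second, Dax lies in a quotient of $\mathbb{Z}[\pi_1\setminus 1]$ by a $\pi_2$--dependent subgroup. Since $\pi_1$ is unchanged, and the extra $\pi_2$ relations coming from $G_j$ contribute trivially, because $G_j$ lies in the boundary and has trivial self-intersection data, Dax in $X_k$ should map isomorphically to Dax in $X$. Hence it vanishes in $X_k$ as well.
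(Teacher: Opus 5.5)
Your logical structure is sound: you close the cycle union isotopy $\Rightarrow$ componentwise isotopy $\Rightarrow$ componentwise homotopy with vanishing Dax $\Rightarrow$ union isotopy, applying \cite{KT_lbt} successively in the complements $X_k$. The homotopy half of your ``main obstacle'' is also fine, though it is easier than you make it and your $\pi_2$ claim is stated backwards. $X$ is obtained from $X_k$ by attaching the $2$--handles $\nu(D_0^j)$, $j\le k$, along meridians of $D_0^j$. These meridians bound punctured copies of $G_j$ in $\partial X_k$, so $X\simeq X_k\vee\bigvee_{j\le k}S^2$, and there is a retraction $r\colon X\to X_k$ with $r|_{X_k}=\mathrm{id}$. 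Hence $X_k\to X$ is $\pi_2$--\emph{injective}: the $G_j$ span a cokernel, not a kernel. Composing a homotopy rel boundary in $X$ with $r$ gives one in $X_k$, and no tubing or intersection count is needed.

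The genuine gap is the Dax half. In $X_k$ the invariant lives in $\Integral[\pi_1\setminus 1]/d_3(\pi_3(X_k))$, and in $X$ it lives in $\Integral[\pi_1\setminus 1]/d_3(\pi_3(X))$. But $\pi_3(X)$ is strictly larger than $\pi_3(X_k)$: it contains Hopf classes on the $G_j$, Whitehead products of the $G_j$ with $\pi_2(X_k)$ and with each other, and their $\pi_1$--translates. Vanishing in $X$ implies vanishing in $X_k$ only if $d_3$ of every such new class lies in $d_3(\pi_3(X_k))$. You give no argument for this. The phrase ``$G_j$ lies in the boundary and has trivial self-intersection data'' has no established connection to how $d_3$ is defined.

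The paper never transfers an algebraic invariant; it transfers \emph{isotopies}. The ``moreover'' part is just \cite{KT_lbt} applied to each pair in $X$ itself, since each $\partial D_0$ already has its dual sphere in $\partial X$. The main ``if'' direction then goes by induction starting from componentwise isotopy:
\begin{enumerate}
\item After an ambient isotopy arranging $D_0=D_1$, the paper splits $X=X'\natural V$, where $V\cong D^2\times S^2$ is a thin neighborhood of $D_0\cup G$ missing the other disks.
\item It attaches a $3$--handle $U$ along the factor sphere of $V$. This gives $X\subset X\cup U=X'\natural(V\cup U)\cong X'$, via a diffeomorphism that is the identity near the remaining disks.
\item Consequently an isotopy in $X$ yields one in $X'$.
\end{enumerate}

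Your scheme is repaired the same way. Use \cite{KT_lbt} in $X$, where the hypotheses do hold, to get $h_k(D_1^{k+1})$ isotopic to $D_0^{k+1}$ rel boundary in $X$. Then attach $3$--handles to $X$ along $G_1,\dots,G_k$. Each one cancels the $2$--handle $\nu(D_0^j)$, so $X\cup U\cong X_k$ rel a neighborhood of the two disks, and the isotopy lands in $X_k$. Together with KT's realization of every Dax value, this shows a posteriori that your claimed injectivity on Dax groups is true. But it holds only as a consequence of this geometric step, not as a replacement for it.
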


\begin{proof}
	The ``moreover'' part follows directly
	from the single disk version of the light bulb theorem, 
	due to Kosonivi\'c and Teichner \cite[Theorem 1.1]{KT_lbt} 
	(see Remark \ref{lbt_disks_remark} below).
	The ``only if'' direction of the main assertion is trivial. 
	Therefore, it suffices to prove the ``if'' direction of the main assertion.
	
	Suppose that 
	$$(\mathcal{D}_0,\partial\mathcal{D}_0),(\mathcal{D}_1,\partial\mathcal{D}_1)\subset (X,\partial X)$$
	are boundary $\pi_1$--nonbusting unions of neatly embedded, mutually disjoint $2$--disks,
	such that $\mathcal{D}_0$ and $\mathcal{D}_1$
	coincide on some neighborhood of $\partial X$ in $X$.
	We assume that 
	every component $2$--disk $D_0\subset\mathcal{D}_0$ is smoothly isotopic in $X$ 
	to some component $2$--disk $D_1\subset \mathcal{D}_1$ relative to $\partial D_0=\partial D_1$.

	Pick some pair of $2$--disks $D_0$ and $D_1$ as above.
	By diffeotopy extension, there exists some $h\in\mathrm{Diffeo}_0(X,\partial X)$
	with the property $h(D_1)=D_0$. 
	Possibly after replacing $\mathcal{D}_1$ with $h(\mathcal{D}_1)$ as above,
	we may assume 
	$$D_0=D_1.$$
	
	Set 
	$\mathcal{D}'_0=\mathcal{D}_0\setminus D_0,\mbox{ and }\mathcal{D}'_1=\mathcal{D}_1\setminus D_1$.	
	By Proposition \ref{bnb_geometric}, we can rewrite $X$ as a boundary-connect sum
	$$X=X'\,\natural\,V,$$
	where $V$ is diffeomorphic to $D^2\times S^2$.
	Moreover, we can assume $D_0=D_1$ to be neatly embedded in $V$,
	and $\mathcal{D}'_0$ and $\mathcal{D}'_1$ to be neatly embedded in $X'$,
	(both away from the decomposition $3$--disk).
	Because of Proposition \ref{bnb_geometric},
	we observe that
	$$(\mathcal{D}'_0,\partial\mathcal{D}'_0),(\mathcal{D}'_1,\partial\mathcal{D}'_1)\subset(X',\partial X')$$
	still form boundary $\pi_1$--nonbusting unions with respect to $X'$.
	
	By assumption, for any component $2$--disk $D'_0\subset\mathcal{D}'_0$,
	there exists some $D'_1\subset\mathcal{D}'_1$ with $\partial D'_0=\partial D'_1$,
	such that $D'_0$ and $D'_1$ are smoothly isotopic in $X$ relative to $\partial D'_0$.
	
	We claim that $D'_0$ and $D'_1$ are also smoothly isotopic in $X'$ relative to $\partial D'_0$.
	In fact, by attaching a $4$--dimensional $3$--handle $U$ along a factor $2$--sphere 
	on the boundary of the boundary-connect summand $V$,
	we obtain an inclusion
	$$X\to X\cup U.$$ 
	It follows that there exists some smooth isotopy
	of $D'_0$ to $D'_1$ in $X\natural U$ relative to $\partial D'_0$.
	Since $V\cup U$ is obviously diffeomorphic to $D^4$, the enlarged smooth $4$--manifold 
	$$X\cup U=X'\,\natural\,(V\cup U)$$
	is diffeomorphic to $X'$,
	and $V\cup U$ is contained in some collar neighborhood of $\partial (X\cup U)$.
	It follows that there exists some smooth isotopy of $D'_0$ to $D'_1$ relative to $\partial D'_0$,
	in $X'\natural (V\cup U)$ and away from a collar neighborhood containing $V\cup U$.
	This yields a smooth isotopy of $D'_0$ to $D'_1$ relative to $\partial D'_0$ in $X'$, as claimed.
	
	The above argument allows us to reduce the case with $(X,\mathcal{D}_0,\mathcal{D}_1)$
	to the case with $(X',\mathcal{D}'_0,\mathcal{D}'_1)$,
	which satisfies the same assumptions, and involves fewer $2$--disks.
	Therefore, the proof completes by induction.	
\end{proof}

\begin{remark}[The relative Dax invariant]\label{lbt_disks_remark}
	We recall basic facts about the relative Dax invariant for the reader's reference.
	Let $X$ be a connected, oriented, smooth $4$--manifold with boundary.
	For neatly embedded $2$--disks 
	$$(D_0,\partial D_0),(D_1,\partial D_1)\subset(X,\partial X)$$
	coincident near $\partial X$,
	and homotopic in $X$ relative to $\partial D_0=\partial D_1$.
	Fixing a basepoint on $\partial D_0$, the \emph{relative Dax invariant} for $(D_0,D_1)$
	is a well-defined element
	$$\mathrm{Dax}(D_0,D_1)\in\Integral[\pi_1(X)\setminus 1]/d_3(\pi_3(X)).$$
	Here, $\Integral[\pi_1(X)\setminus 1]$ denotes the additive abelian group
	consisting of all elements of the form $\sum_g a_g\cdot g$,
	summing over nontrivial elements $g\in\pi_1(M)$,
	and with only finitely many nonzero coefficients $a_g\in\Integral$;
	$d_3(\pi_3(X))$ denotes the image of an abelian group homomorphism
	$$d_3\colon \pi_3(X)\to \Integral[\pi_1(X)\setminus 1],$$
	called the \emph{Dax homomorphism} for $\pi_3(X)$,
	which depends only on $X$ (with the based point).
	The relative Dax invariant is additive, namely,
	$$\mathrm{Dax}(D_0,D_1)+\mathrm{Dax}(D_1,D_2)=\mathrm{Dax}(D_0,D_2),$$
	and anticommutative, namely,
	$$\mathrm{Dax}(D_0,D_1)=-\mathrm{Dax}(D_1,D_0).$$
	If $D_0$ and $D_1$ are smoothly isotopic relative to the boundary,
	$$\mathrm{Dax}(D_0,D_1)=0.$$
	We refer the reader to \cite[Section 3]{Gabai_self-ref} 
	for an elementary treatment due to Gabai, 
	and for general background on early developments.	
	
	Under the extra assumption that $D_0,D_1$ are boundary $\pi_1$--nonbusting in $X$,
	Konosivi\'c and Teichner prove that 
	$D_0$ and $D_1$ are smoothly isotopic relative to the boundary,
	if and only if the relative Dax invariant $\mathrm{Dax}(D_0,D_1)$ vanishes
	\cite[Theorem 1.1]{KT_lbt}.
	This is their (single) \emph{disk version} of the $4$--dimensional light bulb theorem.
	See also Cha and Kim \cite{CK_lbt} for a generalization to 
	properly topologically embedded disks. 
	Formerly, the \emph{$4$--dimensional light bulb theorem}	
	is established by Gabai \cite{Gabai_lbt},
	generalizing the classical light bulb theorem in $3$--manifold topology.
	Gabai's theorem is about smoothly embedded $2$--spheres
	(with so-called smooth geometric duals).
	In the same paper, 
	Gabai also proves the \emph{several spheres version} \cite[Theorem 10.1]{Gabai_lbt},
	based on the \emph{single sphere version} \cite[Theorem 1.9]{Gabai_lbt}.	
\end{remark}

\begin{corollary}\label{uniqueness_bnb_2hdls}
Let $X$ be a simply connected, compact, smooth $4$--manifold 
with possibly disconnected boundary.
Suppose that 
$$i_0,i_1\colon \left(\sqcup^n(D^2\times D^2),\sqcup^n(S^1\times D^2)\right)\to (X,\partial X)$$
are boundary $\pi_1$--nonbusting, smooth embeddings of several $4$--dimensional $2$--handles,
for some integer $n\geq1$,
such that $i_0$ and $i_1$ coincide 
on some neighborhood of $\sqcup^n(S^1\times D^2)$ in $\sqcup^n(D^2\times D^2)$.

If $i_0$ and $i_1$ are homotopic relative to $\sqcup^n(S^1\times D^2)$,
then there exists some $h\in\mathrm{Diffeo}_0(X,\partial X)$,
such that $h\circ i_1=i_0$ holds on $\sqcup^n(D^2\times D^2)$. 
\end{corollary}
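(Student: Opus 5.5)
The plan has two stages. First I will match the core $2$--disks using Theorem \ref{lbt_disks}. Then I will match the $2$--handles themselves by uniqueness of tubular neighborhoods, in the same way as Lemma \ref{tubular_h}.

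\textbf{Matching the cores.} Let $\mathcal{D}_0=i_0(\sqcup^n(D^2\times o))$ and $\mathcal{D}_1=i_1(\sqcup^n(D^2\times o))$. By hypothesis both are boundary $\pi_1$--nonbusting unions of neatly embedded, mutually disjoint $2$--disks, and they coincide near $\partial X$. For each $k$, the given homotopy relative to $\sqcup^n(S^1\times D^2)$ restricts on the $k$-th core to a homotopy in $X$ between the corresponding component disks $D_0^{(k)}$ and $D_1^{(k)}$, relative to their common boundary. Since $X$ is simply connected, the group $\Integral[\pi_1(X)\setminus 1]$ is zero, so $\mathrm{Dax}(D_0^{(k)},D_1^{(k)})=0$ automatically. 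The ``moreover'' part of Theorem \ref{lbt_disks} then shows that each $D_0^{(k)}$ is isotopic to $D_1^{(k)}$ relative to the boundary. The main part of Theorem \ref{lbt_disks} upgrades this to a simultaneous isotopy of $\mathcal{D}_1$ to $\mathcal{D}_0$ relative to $\partial\mathcal{D}_0$. This isotopy fixes a neighborhood of the boundary, because the disks already coincide near $\partial X$ and one can arrange the isotopy to be stationary on a collar. Isotopy extension, applied after first extending by the identity on a collar of $\partial X$, then gives some $h'\in\mathrm{Diffeo}_0(X,\partial X)$ with $h'(\mathcal{D}_1)=\mathcal{D}_0$. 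Composing with a further diffeotopy supported near $\mathcal{D}_0$, I can also arrange $h'\circ i_1=i_0$ on the cores as parametrized maps: the two parametrizations agree near the boundary circle, and reparametrizations of $D^2$ fixed near $\partial D^2$ are isotopic to the identity by Smale's theorem.

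\textbf{Matching the handles.} Now $h'\circ i_1$ and $i_0$ are two tubular neighborhoods of the same neatly embedded disks, and they agree near $\sqcup^n(S^1\times D^2)$. I will run the argument of Lemma \ref{tubular_h}, using the relative version of the constructions in \cite[Lemmas 2.5.1, 2.5.2, 2.5.4]{Wall_book} that fix a neighborhood of $\partial X$. This gives some $g\in\mathrm{Diffeo}_0(X,\partial X)$ and a bundle automorphism $\chi(x,v)=(x,\phi(x).v)$ with $\phi\colon D^2\to \mathrm{O}(2)$ equal to the identity near $\partial D^2$, such that $i_0=g\circ h'\circ i_1\circ\chi$. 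Since $\phi$ is trivial near $\partial D^2$, it defines an element of $\pi_2(\mathrm{O}(2))=0$, so $\phi$ is homotopic to the constant map relative to a neighborhood of $\partial D^2$. Hence $h'\circ i_1\circ \chi$ is isotopic to $h'\circ i_1$ relative to a neighborhood of the attaching region. Isotopy extension then gives some $f\in\mathrm{Diffeo}_0(X,\partial X)$ with $h'\circ i_1\circ\chi=f\circ h'\circ i_1$. I will then set $h=g\circ f\circ h'$, which satisfies $h\circ i_1=i_0$.

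\textbf{Main obstacle.} I expect the delicate step to be the first one. It requires checking that the homotopy relative to the attaching region really yields the boundary-relative homotopy of each component disk demanded by Theorem \ref{lbt_disks}. It also requires checking that the resulting isotopy can be taken to fix a neighborhood of $\partial X$ pointwise, so that $h'$ genuinely lies in $\mathrm{Diffeo}_0(X,\partial X)$. The vanishing of the Dax invariant itself is free because $\pi_1(X)=1$. The framing issue is harmless, since there is no obstruction in $\pi_2(\mathrm{O}(2))$.
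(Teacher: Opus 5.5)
Your proposal is correct and follows the paper's proof. The paper also uses the fact that $\pi_1(X)=1$ makes the relative Dax invariant vanish, then matches the cores by Theorem \ref{lbt_disks}. Its remaining step, upgrading to a diffeotopy with $h\circ i_1=i_0$, is dismissed as ``standard techniques in differential topology''; your tubular-neighborhood argument in the style of Lemma \ref{tubular_h}, using $\pi_2(\mathrm{O}(2))=0$, is exactly the routine filling-in that phrase refers to.
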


\begin{proof}
Note that simply connected manifolds are all orientable.
Since $\pi_1(X)$ is trivial, the relative Dax invariant vanishes automatically
for any pair of relatively homotopic, neatly embedded $2$--disks (Remark \ref{lbt_disks_remark}).
Under the $\pi_1$--nonbusting assumption, 
the (image of the) disjoint unions of the $n$ core $2$--disks
as embedded via $i_0$ and $i_1$ can be matched up by some smooth isotopy
relative to the boundary circles, 
by the $4$--dimensional lightbulb theorem for several disks (Theorem \ref{lbt_disks}).
This implies the existence of some $h\in\mathrm{Diffeo}_0(X,\partial X)$
with $h\circ i_1=i_0$,
by standard techniques in differential topology.
\end{proof}

\section{Proof of the main theorem}\label{Sec-main_proof}

This section is devoted to the proof of Theorem \ref{main_T}.

\begin{lemma}\label{X_in_D}
	Let $X$ be a connected, compact, orientable $4$--manifold. 
	Suppose that there exists some smoothing embedding $i\colon X\to D^4$,
	Then, the following direct-product decomposition holds.
	$$\mathrm{Mod}(X,\partial X)=\mathrm{Mod}(D^4,\partial D^4)\times \mathrm{Ker}(i_*).$$
	Here, $\mathrm{Mod}(D^4,\partial D^4)$
	denotes the image of $j_*\colon \mathrm{Mod}(D^4,\partial D^4)\to \mathrm{Mod}(X,\partial X)$
	for any smooth embedding $j\colon D^4\to X$
	and with $i\circ j\colon D^4\to D^4$ orientation-preserving.
	In fact, $j_*$ is injective, and is independent of $j$ subject to the above requirements.
\end{lemma}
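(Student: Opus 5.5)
The idea is that $i_*\circ j_*$ is an implantation of $\mathrm{Mod}(D^4,\partial D^4)$ into itself, and I expect it to be the identity. The direct product decomposition should then follow from elementary group theory.

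\textbf{Step 1: $(i\circ j)_*$ is the identity.} Let $e=i\circ j\colon D^4\to D^4$, an orientation-preserving smooth embedding into the interior (after shrinking if needed). By uniqueness of tubular neighborhoods of a point (the disk theorem, Palais--Cerf), $e$ is smoothly isotopic to a radial shrinking $\rho\colon D^4\to D^4$, $x\mapsto x/2$. Diffeotopy extension gives this isotopy ambiently, fixing a neighborhood of $\partial D^4$. Hence there is some $k\in\mathrm{Diffeo}_0(D^4,\partial D^4)$ with $e=k\circ\rho$, and therefore $e_*=k_*\circ\rho_*$ on $\mathrm{Mod}$. Conjugation by $k$ acts trivially on $\mathrm{Mod}(D^4,\partial D^4)$, so $e_*=\rho_*$. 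Next, $\rho_*$ is the identity: given $g$ supported in a smaller disk, the shrunken $\rho g\rho^{-1}$ (extended by the identity) is conjugate to $g$ by a radial "expansion" diffeomorphism. That expansion is isotopic to the identity through diffeomorphisms $\rho_t$ fixing a neighborhood of $\partial D^4$, where $\rho_t$ is a family of radial reparametrizations equal to $x\mapsto x/(1+t)$ near the support and the identity near the boundary. The resulting path $\rho_t g\rho_t^{-1}$ lies in $\mathrm{Diffeo}^+(D^4,\partial D^4)$, so $[\rho g\rho^{-1}]=[g]$. Consequently $i_*\circ j_*=\mathrm{id}$. This radial-rescaling argument is the one I expect to need the most care; alternatively one can view it as the well-known fact that any implantation of a disk into a disk induces the identity on $\pi_0$.

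\textbf{Step 2: the decomposition.} From Step 1, $j_*$ is injective and $i_*$ is a retraction onto $\mathrm{Im}(j_*)$. So $\mathrm{Mod}(X,\partial X)=\mathrm{Im}(j_*)\ltimes\mathrm{Ker}(i_*)$, via $\phi=j_*(i_*\phi)\cdot\bigl(j_*(i_*\phi)^{-1}\phi\bigr)$. To upgrade this to a direct product, I will show that $\mathrm{Im}(j_*)$ commutes with everything. Any element of $\mathrm{Im}(j_*)$ is supported in $j(D^4)$. Using a collar, $j$ can be isotoped so that $j(D^4)$ lies in a collar neighborhood of $\partial X$, and any $f\in\mathrm{Diffeo}^+(X,\partial X)$ can be isotoped rel $\partial X$ to be the identity on that collar. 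Then the two representatives have disjoint supports and commute. The needed independence of $j$ is the next point.

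\textbf{Step 3: independence of $j$.} Any two orientation-preserving embeddings $j,j'\colon D^4\to\mathrm{int}(X)$ are ambiently isotopic rel $\partial X$, by the disk theorem, which is also the $n=0$ case of Theorem \ref{uniqueness_1hdlbdy}. So $j'=k\circ j$ with $k\in\mathrm{Diffeo}_0(X,\partial X)$, and thus $j'_*=j_*$. The orientation requirement on $i\circ j$ fixes which of the two isotopy classes occurs when $X$ is not given an orientation, since the orientation is pulled back from $D^4$ via $i$. This also justifies moving $j$ into the collar in Step 2.
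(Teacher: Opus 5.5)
Your proposal is correct and takes essentially the paper's route: disk (tubular-neighborhood) uniqueness gives independence of $j$ and $i_*\circ j_*=\mathrm{id}$, hence a semidirect product with normal factor $\mathrm{Ker}(i_*)$, and $\mathrm{Im}(j_*)$ is central because representatives can be arranged to have disjoint supports. Your radial-rescaling argument for $\rho_*=\mathrm{id}$ and your collar argument just spell out details that the paper compresses into one appeal to tubular neighborhood uniqueness and to representing $f$ by a diffeomorphism fixing a neighborhood of $j(D^4)$.
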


\begin{proof}
	Any smooth embedding $j\colon D^4\to X$ with $j(D^4)\subset\mathrm{int}(X)$
	forms a tubular neighborhood of the point $j(o)\in \mathrm{int}(X)$ in $X$.
	With the extra assumption that $i\circ j\colon D^4\to D^4$ is orientation-preserving,
	the tubular neighborhood uniqueness theorem \cite[Theorem 2.5.5]{Wall_book}
	implies directly 
	that $j_*\colon \mathrm{Mod}(D^4,\partial D^4)\to \mathrm{Mod}(X,\partial X)$
	is independent of $j$.
	(The independence of $j_*$ also holds 
	for any oriented $X$ and any orientation-preserving $j$,
	without assuming $X$ to be embeddable into $D^4$.) 
	Moreover, it also implies that the composite homomorphism $i_*\circ j_*\colon 
	\mathrm{Mod}(D^4,\partial D^4)\to\mathrm{Mod}(X,\partial X)\to\mathrm{Mod}(D^4,\partial D^4)$
	is the trivial automorphism of $\mathrm{Mod}(D^4,\partial D^4)$.
	This implies a semidirect-product decomposition 
	$\mathrm{Mod}(X,\partial X)=\mathrm{Ker}(i_*)\rtimes \mathrm{Mod}(D^4,\partial D^4)$.
	On the other hand, any $[f]\in\mathrm{Mod}(D^4,\partial D^4)$
	can be represented by some $f\in\mathrm{Diffeo}^+(X,\partial X)$
	fixing some neighborhood of $j(D^4)\subset\mathrm{int}(X)$.
	Therefore, $\mathrm{Mod}(D^4,\partial D^4)$ is contained in the center of $\mathrm{Mod}(X,\partial X)$.
	It follows that the above semidirect-product decomposition is a direct-product decomposition, as asserted.
\end{proof}

\begin{lemma}\label{Mod_nS2D2_rel}
	Let $X$ be a $4$--manifold diffeomorphic to $(S^2\times D^2)^{\natural n}$,
	for some integer $n\geq0$.
	The following statements all hold.
	\begin{enumerate}
	\item 
	There exists some smoothing embedding $i\colon X\to D^4$ with $i(X)\subset\mathrm{int}(D^4)$,
	such that $D^4\setminus i(\mathrm{int}(X))$ is diffeomorphic to $D^4\# (S^1\times D^3)^{\natural n}$.
	\item
	For any smooth embedding $i\colon X\to D^4$ as above,
	the assignment 
	$$[f]\mapsto (i_*([f]),\Delta f)$$
	determines a group isomorphism
	$$\mathrm{Mod}(X,\partial X)\to \mathrm{Mod}(D^4,\partial D^4)\times \wedge^2 H_2(X;\Integral).$$
	In particular, $\mathrm{Mod}(X,\partial X)$ is an abelian group.
	\item 
	The above group isomorphism depends only on $X$.
	In other words, 
	the implantation homomorphism
	$$i_*\colon \mathrm{Mod}(X,\partial X)\to \mathrm{Mod}(D^4,\partial D^4)$$
	does not depend on the choice of 
	a smooth embedding $i\colon X\to D^4$ as above.
	\end{enumerate}
\end{lemma}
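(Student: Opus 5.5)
For statement (1), I will use the standard picture of $D^4$ as a $0$--handle union. Take $n$ disjoint unknotted, unlinked circles $c_1,\ldots,c_n$ in $S^3$, each with the $0$--framing. Attaching $2$--handles along them to a $0$--handle gives $X\cong(S^2\times D^2)^{\natural n}$. Dually, $D^4$ is obtained from this $X$ by attaching $n$ $3$--handles that cancel the $2$--handles; their belt/attaching spheres are the factor $2$--spheres in $\partial X$. Realize this concretely: take a smaller $D^4$, push the trivial spanning disks of the $c_k$ into it, and thicken. The complement $D^4\setminus i(\mathrm{int}(X))$, turned upside down, is $\partial D^4\times I$ with $n$ $1$--handles attached. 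Equivalently, it is $S^3\times I$ union $(S^1\times D^3)^{\natural n}$, that is, $D^4\#(S^1\times D^3)^{\natural n}$ up to the stated identification. This part is routine.

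For statement (2), I first check that $\Psi:[f]\mapsto(i_*[f],\Delta f)$ is a homomorphism. Here $\Delta$ is a homomorphism to $\wedge^2H_2(X;\Integral)$ by Proposition \ref{variation_tensor}, since $H_2(X)\to H_2(X,\partial X)$ is zero for $X$ of this form. By Lemma \ref{X_in_D}, $\mathrm{Mod}(X,\partial X)=j_*\mathrm{Mod}(D^4,\partial D^4)\times\mathrm{Ker}(i_*)$ with $i_*j_*=\mathrm{id}$. Also $\Delta\circ j_*=0$, because an element supported in a ball has zero variation by Proposition \ref{variation_properties}(3). So it suffices to show that $\Delta$ restricted to $\mathrm{Ker}(i_*)$ is an isomorphism onto $\wedge^2H_2(X;\Integral)$.

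\emph{Surjectivity.} For each pair $k<l$, I choose a sub-thickened barbell $\mathcal{N}_{kl}\subset X$. It has bells parallel copies of the core spheres $P_k,P_l$, with the bar running in the $0$--handle, and it sits inside $X$ compatibly with an unknotted embedding into $D^4$. Then I implant a model barbell twist $\varphi$ via Theorem \ref{BG_construction}. Its variation is $[P_k]\wedge[P_l]$ by Example \ref{BG_Delta_f_example} and Proposition \ref{variation_properties}(3). Its further implantation into $D^4$ is trivial by Corollary \ref{BG_construction_corollary}(1). To get this I arrange that the composite $\mathcal{N}_{kl}\to X\to D^4$ is the unknotted embedding of that corollary: the remaining $2$--handles of $X$ are cancelled by their $3$--handles. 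So these elements lie in $\mathrm{Ker}(i_*)$, and they hit a basis of $\wedge^2H_2$.

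\emph{Injectivity.} This is the main obstacle. Suppose $[f]\in\mathrm{Ker}(i_*)$ with $\Delta f=0$. Take cocore disks $E_1,\ldots,E_n$ of the $2$--handles, which are boundary $\pi_1$--nonbusting. Since $\Delta f=0$, $f(E_k)$ is homologous to $E_k$ rel boundary. Because $X$ is simply connected with $\pi_2(X)=H_2(X)$, this upgrades to $f(E_k)$ being homotopic to $E_k$ rel boundary. Applying Corollary \ref{uniqueness_bnb_2hdls} to $f\circ(\text{cocore handles})$ versus the cocore handles, I can isotope $f$ rel $\partial X$ to fix a neighborhood of $\partial X\cup\bigcup E_k$. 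One subtlety is the framing of the thickened disks; framings of $D^2\times D^2$ rel $S^1\times D^2$ are governed by $\pi_2(SO(2))=0$, so this is fine. The complement of that neighborhood is a $4$--ball $B$, so $f=j_*(\psi)$ for some $\psi\in\mathrm{Mod}(D^4,\partial D^4)$. Then $\psi=i_*j_*\psi=i_*[f]=1$, so $[f]=1$. Abelianness follows from the product decomposition.

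For statement (3), I take two embeddings $i,i'$ as in (1). Their complements $C,C'$ both contain a $1$--handlebody $Z\cong(S^1\times D^3)^{\natural n}$ whose removal leaves a collar-like region. Since $\pi_1(D^4)$ is trivial, the relevant embeddings $Z\to D^4$ are homotopic. By Theorem \ref{uniqueness_1hdlbdy}, applied in its pointed form to handle the parametrization of the $D^4$ summand, there is $h\in\mathrm{Diffeo}_0(D^4,\partial D^4)$ carrying one configuration to the other. Then $h\circ i$ and $i'$ differ by a diffeomorphism of $X$ that is identity near the boundary. I expect the delicate point to be checking that this diffeomorphism is trivial, or at least central, in $\mathrm{Mod}(X,\partial X)$, so that conjugation does not change $i_*$. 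Here I would use that $\mathrm{Mod}(X,\partial X)$ is abelian by (2), so conjugation acts trivially and $i'_*=(h\circ i)_*=i_*$.
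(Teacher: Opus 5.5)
Your outline follows the paper's route. You use the splitting of Lemma \ref{X_in_D} and $\Delta\circ j_*=0$; implanted model barbell twists for the image; cocore $2$--handles with Hurewicz and Corollary \ref{uniqueness_bnb_2hdls} for injectivity; and Theorem \ref{uniqueness_1hdlbdy} plus abelianness for (3).

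\textbf{The gap.} One step is not justified and is in fact circular. In the surjectivity part you assert that each implanted twist (call it $\hat\varphi_{kl}$, as in the paper) lies in $\mathrm{Ker}(i_*)$. Your reason is that you can ``arrange'' for the composite $\mathcal{N}\to\mathcal{N}_{kl}\to X\xrightarrow{i}D^4$ to be the embedding of Corollary \ref{BG_construction_corollary}(1). But in statement (2) the unknotted embedding $i$ is arbitrary, not yours to choose. Corollary \ref{BG_construction_corollary}(1) gives triviality only for one particular embedding of the model, namely one extending over the $4$--ball $V$ together with the two $3$--handles. Observing that the remaining $2$--handles are cancelled only shows that the complement of your composite is diffeomorphic to $D^4\#(S^1\times D^3)^{\natural 2}$. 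That does not by itself identify the implantation homomorphism. To transfer triviality you need independence of the implantation from the choice of unknotted embedding, which is statement (3) for $n=2$. You prove (3) only afterwards, using abelianness from (2). For $n=2$, where $\mathcal{N}_{12}$ is essentially $X$ itself, your claim is literally an instance of (3).

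\textbf{The fix.} The claim is also unnecessary, and the paper avoids it. We have $\mathrm{Mod}(X,\partial X)=j_*\mathrm{Mod}(D^4,\partial D^4)\times\mathrm{Ker}(i_*)$, $\Delta$ is a homomorphism into an abelian group, and $\Delta\circ j_*=0$. Hence $\Delta(\mathrm{Ker}(i_*))=\Delta(\mathrm{Mod}(X,\partial X))$. So it suffices that $\Delta\hat\varphi_{kl}=[P_k]\wedge[P_l]$, which you correctly get from Example \ref{BG_Delta_f_example}. Concretely, $\hat\varphi_{kl}\cdot j_*\bigl(i_*(\hat\varphi_{kl})\bigr)^{-1}$ lies in $\mathrm{Ker}(i_*)$ and has the same variation. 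This is the paper's step $\mathrm{Im}(\Delta)=\wedge^2H_2(X;\Integral)$. With this replacement, your injectivity argument goes through; it is the paper's $\mathrm{Ker}(\Delta)=j_*\mathrm{Mod}(D^4,\partial D^4)$ in slightly different packaging. Your proof of (3) then also goes through as in the paper.

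\textbf{A remark on (3).} Abelianness disposes of conjugation by $g=i'^{-1}\circ h\circ i$ only because $g$ fixes $\partial X$ pointwise. That holds exactly when your identifications of the two complements induce the same identification of their inner boundaries with $\partial X$. This has to be arranged, e.g.\ by extending a self-diffeomorphism of $\#^n(S^2\times S^1)$ over the complement. The paper passes over this point in the same way.
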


\begin{proof}
	The first statement in Lemma \ref{Mod_nS2D2_rel} is well-known.
	In general, $S^{p+q+1}$
	admits a standard (analogous) genus $n$ Heegaard splitting into 
	$(D^{p+1}\times S^q)^{\natural n}$ and $(S^p\times D^{q+1})^{\natural n}$,
	along a Heegaard hypersurface $S^p\times S^q$.
	Removing an open $(p+q+1)$--disk from the $(S^p\times D^{q+1})^{\natural n}$
	is equivalent to adding a connect summand $D^{p+q+1}$,
	yielding a splitting of $D^{p+q+1}$ into $(D^{p+1}\times S^q)^{\natural n}$
	and $D^{p+q+1}\#(S^p\times D^{q+1})^{\natural n}$.
	
	To prove the second statement in Lemma \ref{Mod_nS2D2_rel},
	we invoke the direct-product decomposition
	$$\mathrm{Mod}(X,\partial X)=\mathrm{Mod}(D^4,\partial D^4)\times \mathrm{Ker}(i_*),$$
	(Lemma \ref{X_in_D}).
	Note that the direct product factor $\mathrm{Mod}(D^4,\partial D^4)$
	maps identically onto $\mathrm{Mod}(D^4,\partial D^4)$, under
	the implantation homomorphism
	$$i_*\colon\mathrm{Mod}(X,\partial X)\to \mathrm{Mod}(D^4,\partial D^4),$$
	(Lemma \ref{X_in_D}).
	On the other hand, the variation group homomorphism
	$$\Delta\colon \mathrm{Mod}(X,\partial X)\to H_2(X;\Integral)\otimes H_2(X;\Integral)$$
	has kernel obviously containing the factor $\mathrm{Mod}(D^4,\partial D^4)$, 
	and has image contained in $\wedge^2 H_2(X;\Integral)$,
	by Proposition \ref{variation_tensor}.
	It remains to show that 
	the image of $\Delta$ is equal to $\wedge^2H_2(X;\Integral)$,
	and the kernel of $\Delta$ is equal to $\mathrm{Mod}(D^4,\partial D^4)$.
	
	To prove $\mathrm{Im}(\Delta)=\wedge^2H_2(X;\Integral)$,
	we can pick some collection of $n$ smoothly embedded, mutually disjoint, oriented $2$--spheres
	$$S_1,\ldots,S_n\subset X,$$
	such that $[S_1],\ldots,[S_n]$ form a basis of $H_2(X;\Integral)\cong \Integral^n$.
	For example, treating $X$ diffeomorphically as $(S^2\times D^2)^{\natural n}$,
	one may take the factor $2$--spheres
	in the boundary-connect summands $S^2\times D^2$.
	The normal vector bundle of any smoothly embedded $2$--sphere in $X$ is trivializable,
	since the intersection quadratic form vanishes on $H_2(X;\Integral)$.
	For each pair $(S_i,S_j)$ with $i<j$, 
	we obtain some region 
	$$\mathcal{N}_{ij}\subset X$$
	diffeomorphic to $(S^2\times D^2)^{\natural 2}$, by connecting some
	mutually disjoint tubular neighborhoods of $S_i$ and $S_j$ in $X$
	with some smoothly embedded $4$--dimensional $1$--handle
	in their complement closure.
	Fix some model barbell twist 
	$$\varphi\in\mathrm{Mod}(\mathcal{N},\partial\mathcal{N})$$
	on a model thickened barbell $\mathcal{N}$,
	with the model barbell spine $\Gamma=P_a\cup I\cup P_b$ (Definition \ref{BG_barbell_twist_def}).
	Note that there exists 
	some obvious orientation-preserving diffeomorphic involution of $\mathcal{N}_{ij}$
	which switches the oriented $2$--spheres $S_i$ and $S_j$ in $\mathcal{N}_{ij}$.
	Therefore, possibly after adjusting with such an involution,
	we can obtain an orientation-preserving diffeomorphism
	$\mathcal{N}\to\mathcal{N}_{ij}$,
	making sure that $[P_a]\wedge [P_b]$ maps to 
	the generator $[S_i]\wedge[S_j]$ of $\wedge^2H_2(\mathcal{N}_{ij};\Integral)\cong\Integral$.
	(We do not need to match up the model bells with the $2$--spheres, 
	although this can be done for sure.)
	Denote the implantation of $\varphi$ via $\mathcal{N}\to\mathcal{N}_{ij}\to X$
	as
	$$\hat{\varphi}_{ij}\in \mathrm{Mod}(X,\partial X).$$
	By the formula (\ref{Delta_BG_implanted_homological}) in Example \ref{BG_Delta_f_example}
	(see also Corollary \ref{implanted_homological}),
	we obtain
	$$\Delta\hat{\varphi}_{ij}=[S_i]\wedge[S_j]$$
	in $\wedge^2H_2(X;\Integral)$.
	Ranging over all $i,j\in\{1,\ldots,n\}$ with $i<j$,
	the alternating tensors	$[S_i]\wedge[S_j]$ form a free generating set for
	$\wedge^2H_2(X;\Integral)\cong\Integral^{n(n-1)/2}$.
	This proves $\mathrm{Im}(\Delta)\supset \wedge^2H_2(X;\Integral)$, and hence,
	$$\mathrm{Im}(\Delta)=\wedge^2H_2(X;\Integral),$$
	as desired.
		
	To prove $\mathrm{Ker}(\Delta)=\mathrm{Mod}(D^4,\partial D^4)$,
	we can pick some boundary $\pi_1$--nonbusting collection of 
	$n$ smoothly embedded, mutually disjoint $4$--dimensional $2$--handles
	$$(E_i\times D^2,\partial E_i\times D^2)\subset(X,\partial X),$$
	indexed by $i=1,\ldots,n$, such that
	$$V=X\setminus\left(\bigcup_{i=1}^n\left(\mathrm{int}(E_i)\times D^2\right)\right)$$
	is diffeomorphic to $D^4$ after corner smoothing along all $\partial E_i\times \partial D^2$
	(see Definition \ref{bnb_terms}).
	For example, treating $X$ diffeomorphically as $(S^2\times D^2)^{\natural n}$,
	one may take the tubular neighborhoods of the factor $2$--disks  
	in the boundary-connect summands $S^2\times D^2$.	
	It suffices to prove $\mathrm{Ker}(\Delta)\subset\mathrm{Mod}(D^4,\partial D^4)$.
	Suppose that $f\in\mathrm{Diffeo}^+(X,\partial X)$ satisfies $\Delta f=0$ in $\wedge^2H_2(X,\partial X)$.
	It follows that each $f(E_i)$ is homologous to $E_i$ in $H_2(X,E_i;\Integral)$,
	since the cellular $2$--cycle $f(E_i)-E_i$ is already null-homologous in $H_2(X;\Integral)$
	(see (\ref{Delta_f_def})).
	Since $X$ is simply connected, we infer that each $f(E_i)$ is homotopic to $E_i$
	in $X$ relative to $\partial E_i$, by the Hurewicz theorem.
	The inclusion of the union of all $E_i\times D^2$ 
	can be viewed as a boundary $\pi_1$--nonbusting smooth embedding of $n$ $4$--dimensional $2$--handles
	$$i_0\colon (\sqcup^n(D^2\times D^2),\sqcup^n(S^1\times D^2))\to (X,\partial X).$$
	Hence,
	$$i_1=f\circ i_0$$ 
	is another boundary $\pi_1$--nonbusting smooth embedding, 
	homotopic to $i_0$ relative to the attaching solid tori $\sqcup^n(S^1\times D^2)$.
	It follows that 
	$$i_0=h\circ i_1=h\circ f\circ i_0$$
	holds for some $h\in\mathrm{Diffeo}_0(X,\partial X)$,
	by our (ambient simply connected, boundary $\pi_1$--nonbusting) embedding uniqueness 
	for several $2$--handles (Corollary \ref{uniqueness_bnb_2hdls}).
	Set $g\in\mathrm{Diffeo}^+(X,\partial X)$ to be
	$$g=h\circ f.$$
	The above construction implies that $g$ preserves $V$ and
	fixes $X\setminus V$, so $g$ arises as the implantation of $g|_V$.
	Since $V$ is diffeomorphic to $D^4$, the boundary-fixing mapping class $[g]\in\mathrm{Mod}(X,\partial X)$
	lies in the factor subgroup $\mathrm{Mod}(D^4,\partial D^4)$.
	On the other hand, $h\in\mathrm{Diffeo}_0(X,\partial X)$
	represents the trivial boundary-fixing mapping class in $\mathrm{Mod}(X,\partial X)$,
	by definition.
	Therefore, we obtain
	$$[f]=[h^{-1}\circ g]=[h]^{-1}\cdot[g]=[g]$$
	in $\mathrm{Mod}(X,\partial X)$, implying that $[f]$ also lies in 
	the factor subgroup $\mathrm{Mod}(D^4,\partial D^4)$.
	This proves $\mathrm{Ker}(\Delta)\subset \mathrm{Mod}(D^4,\partial D^4)$, and hence,
	$$\mathrm{Ker}(\Delta)=\mathrm{Mod}(D^4,\partial D^4),$$
	as desired.
	
	Summarizing the above arguments,
	we have proved
	$\mathrm{Im}(\Delta)=\wedge^2H_2(X;\Integral)$ and $\mathrm{Ker}(\Delta)=\mathrm{Mod}(D^4,\partial D^4)$,
	and they imply that $\mathrm{Ker}(i_*)$ maps isomorphically onto $\wedge^2H_2(X;\Integral)$ under $\Delta$.
	Therefore, the assignment $[f]\mapsto (i_*[f],\Delta f)$ determines a group isomorphism
	$$\mathrm{Mod}(X,\partial X)\to \mathrm{Mod}(D^4,\partial D^4)\times \wedge^2H_2(X;\Integral).$$
	This proves the second statement in Lemma \ref{Mod_nS2D2_rel}.
	
	To prove the third statement in Lemma \ref{Mod_nS2D2_rel}, suppose that
	$$i_0,i_1\colon X\to D^4$$
	are smooth embeddings,
	such that $D^4\setminus i_0(\mathrm{int}(X))$ and $D^4\setminus i_1(\mathrm{int}(X))$
	are diffeomorphic to $D^4\#(S^1\times D^3)^{\natural n}$.
	Fix some identification of $D^4\setminus i_0(\mathrm{int}(X))$ and $D^4\setminus i_1(\mathrm{int}(X))$
	as images of smooth embeddings
	$$j_0,j_1\colon (D^4\#(S^1\times D^3)^{\natural n},\partial D^4)\to (D^4,\partial D^4).$$
	For convenience, we may consider alternatively smooth embeddings
	$$\hat{\jmath}_0,\hat{\jmath}_1\colon (S^1\times D^3)^{\natural n}\to S^4,$$
	such that $\hat{\jmath}_0$ and $\hat{\jmath}_1$ are identical on 
	some tubular neighborhood $K$ of some point $q\in(S^1\times D^3)^{\natural n}$,
	which is diffeomorphic to $D^4$.
	We think of $j_0$ and $j_1$ as the restrictions of $\hat{\jmath}_0$ and $\hat{\jmath}_1$
	to $(S^1\times D^3)^{\natural n}\setminus \mathrm{int}(K)$.
	Since $S^4$ is simply connected,
	$\hat{\jmath}_0$ and $\hat{\jmath}_1$ are obviously homotopic relative to $q$.
	Therefore,
	$$\hat{h}\circ\hat{\jmath}_1=\hat{\jmath}_0$$
	holds for some $\hat{h}\in\mathrm{Diffeo}_0(S^4)$,
	such that	some diffeotopy of $\hat{h}$ to $\mathrm{id}$ 
	fixes some neighborhood of $\hat{\jmath}_0(q)$ constantly,
	by our embedding uniqueness
	for a $4$--dimensional $1$--handlebody (Theorem \ref{uniqueness_1hdlbdy}).
	We can require $\hat{\jmath}_0(K)$
	to be contained in the fixed neighborhood of $\hat{\jmath}_0(q)$
	as above, possibly after conjugating the diffeotopy of $\hat{h}$
	with some diffeomorphism of $S^4$ fixing $\hat{\jmath}_0(q)$.
	Therefore, the restriction of $\hat{h}$ to 
	$S^4\setminus\hat{\jmath}_0(\mathrm{int}(K))=S^4\setminus\hat{\jmath}_0(\mathrm{int}(K))$
	gives rise to some $h\in\mathrm{Diffeo}_0(D^4,\partial D^4)$,
	such that 
	$$h\circ j_1=j_0$$
	holds on $D^4\#(S^1\times D^3)^{\natural n}$,
	determining some $g\in\mathrm{Diffeo}^+(X,\partial X)$
	with the property
	$$h\circ i_1=i_0\circ g.$$
	The implantation automorphism $g_*$ of $\mathrm{Mod}(X,\partial X)$
	is trivial, for it is an inner automorphism of $\mathrm{Mod}(X,\partial X)$,
	while we have shown that $\mathrm{Mod}(X,\partial X)$ is an abelian group.
	The implantation automorphism $h_*$ 
	of $\mathrm{Mod}(D^4,\partial D^4)$ is also trivial for a similar reason.
	Therefore, we obtain
	$$(i_1)_*=h_*\circ (i_1)_*=(i_0)_*\circ g_*=(i_0)_*,$$
	as group homomorphisms 
	$\mathrm{Mod}(X,\partial X)\to\mathrm{Mod}(D^4,\partial D^4)$.
	Therefore, the implantation homomorphism 
	$i_*\colon\mathrm{Mod}(X,\partial X)\to\mathrm{Mod}(D^4,\partial D^4)$
	does not depend on the choice of a smooth embedding $i\colon X\to D^4$
	with $D^4\setminus i(X)$ diffeomorphic to $D^4\#(S^1\times D^3)^{\natural n}$.
	This proves the third statement of Lemma \ref{Mod_nS2D2_rel}.	 
\end{proof}

With the above preparation, we prove Theorem \ref{main_T} as follows.

Let $X$ be an oriented $4$--manifold diffeomorphic to $(S^2\times D^2)^{\natural n}$,
for some integer $n\geq0$.

To show the existence part in Theorem \ref{main_T},
we construct a group homomorphism 
$$
T\colon \wedge^2H_2(X;\Integral)\to\mathrm{Mod}(X,\partial X),
$$
as the composite homomorphism
\begin{equation}\label{T_construction}
\xymatrix{
\wedge^2H_2(X;\Integral) \ar[r] &
\mathrm{Mod}(D^4,\partial D^4)\times \wedge^2H_2(X;\Integral)
\ar[r] &
\mathrm{Mod}(X,\partial X),
}
\end{equation}
which is the factor inclusion followed by
the inverse of the isomorphism established in Lemma \ref{Mod_nS2D2_rel}.

It is easy to verify that that 
$T$ satisfies the three asserted properties in Theorem \ref{main_T}.
In fact, the homological formula follows directly from Corollary \ref{implanted_homological},
observing $\Delta T_\omega=\omega$ for all $\omega\in \wedge^2 H_2(X;\Integral)$.
The other two asserted properties in Theorem \ref{main_T},
involving $i\colon X\to D^4$ or $j\colon D^4\to X$,
are direct implications of Lemma \ref{Mod_nS2D2_rel} 
and the construction (\ref{T_construction}) of $T$.
Therefore, the homomorphism $T$ as construction above
satisfy all the asserted properties in Theorem \ref{main_T}.

The uniqueness part in Theorem \ref{main_T} is implied immediately from the following simple observation.
In fact, for any $T\colon \wedge^2H_2(X;\Integral)\to \mathrm{Mod}(X,\partial X)$ satisfying 
all the declared properties in Theorem \ref{main_T},
we obtain a group homomorphism 
$$\mathrm{Mod}(D^4,\partial D^4)\times \wedge^2H_2(X;\Integral)$$
as defined by the assignment 
\begin{equation}\label{psi_omega}
(\psi,\omega)\mapsto T_\omega\cdot j_*(\psi)
\end{equation}
fixing some smooth embedding $j\colon D^4\to X$. 
Using the oriented doubling $Y=W_X=X\cup_{\partial X}(-X)$,
it is straightforward to check that 
the homomorphism (\ref{psi_omega}) is exactly inverse 
to the assignment $\phi\mapsto (i_*\phi,\Delta\phi)$ in Lemma \ref{Mod_nS2D2_rel}.
Therefore, $T$ has to take the form (\ref{T_construction}) as constructed above.

This completes the proof of Theorem \ref{main_T}.

\section{Classification of barbell spines}\label{Sec-classification_spine}

In this section, we characterize and classify all the barbell spines 
in an orientable smooth $4$--manifold diffeomorphic to $(S^2\times D^2)^{\natural 2}$
(Corollaries \ref{classification_spine_GL} and \ref{characterization_spine}).
The classification essentially reduces to the classification
of $2$--sphere cut systems in $(S^2\times S^1)^{\# 2}$ 
(Theorem \ref{classification_spine} and Remark \ref{classification_spine_remark}),
which implies Corollaries \ref{classification_spine_GL} and \ref{characterization_spine}.

\begin{theorem}\label{classification_spine}
	Let $\mathcal{N}$ be a smooth $4$--manifold diffeomorphic to $(S^2\times D^2)^{\natural 2}$.
	
	Then, for every barbell spine $\Gamma=P_a\cup I\cup P_b$ of $\mathcal{N}$,
	the embedded disjoint union of $2$--spheres
	$P_a\sqcup P_b$ can be smoothly isotoped into $\partial\mathcal{N}$.
	Moreover, this relationship determines a well-defined, bijective correspondence
	betweeen 
	the barbell spines in $\mathcal{N}$, up to boundary-fixing diffeotopy of $\mathcal{N}$,
	and 
	the ordered pairs of 
	smoothly embedded, mutually disjoint, unionwise nonseparating, oriented $2$--spheres in $\partial \mathcal{N}$,
	up to diffeotopy of $\partial\mathcal{N}$.
\end{theorem}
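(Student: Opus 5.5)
The plan is to pass everything to the boundary $3$--manifold $M=\partial\mathcal{N}\cong(S^2\times S^1)^{\#2}$ and use the fact that $\mathcal{N}$ carries a product collar. Fix a barbell spine $\Gamma=P_a\cup I\cup P_b$ together with the cut $3$--disk $B$ of Definition \ref{barbell_spine_def}. Cutting along $B$ writes $\mathcal{N}=(P_a\times D^2)\natural(P_b\times D^2)$, with $P_{a/b}$ the zero sections. In each summand $P\times D^2$, the sphere $P\times\{0\}$ is isotopic to $P\times\{p\}$ with $p\in\partial D^2$, through an isotopy supported away from the frontier of the summand. This gives the first assertion: $P_a\sqcup P_b$ isotopes into $\partial\mathcal{N}$. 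Doing the same with the bar, we see that $\Gamma$ is diffeotopic rel boundary to a parallel copy of a triple $(S_a,J,S_b)$ in $\partial\mathcal{N}$, exactly as in Example \ref{example_spine}. The spheres $S_a,S_b$ are disjoint, and they are unionwise nonseparating because each is a factor sphere of a distinct $S^2\times S^1$ summand. The map is then $\Gamma\mapsto(S_a,S_b)$ up to diffeotopy of $\partial\mathcal{N}$.

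For well-definedness, several choices must not matter. First, the boundary sphere pair produced from $\Gamma$ should be independent of the choices made (of $B$, the product structures, and the pushing direction $p$), up to isotopy in $M$. Second, boundary-fixing diffeotopic spines should give isotopic pairs. I would argue with the $\pi_1$--free / homological structure of $M$. Disjoint oriented nonseparating sphere pairs in $(S^2\times S^1)^{\#2}$ are classified up to isotopy by their homology classes: the complement of such a pair is $S^3$ minus four balls, and isotopy classes of sphere systems in a doubled handlebody are governed by Laudenbach's theorem, with Dehn twists about spheres acting trivially. The classes satisfy $[S_{a/b}]=[P_{a/b}]$ under $H_2(M)\cong H_2(\mathcal{N})$, and the latter depend only on $\Gamma$. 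Thus well-definedness reduces to the homological invariant.

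For surjectivity, start from any pair $(S_a,S_b)$. Choose an arc $J$ and push $S_a\cup J\cup S_b$ into the collar. Because $(S_a,S_b)$ is nonseparating, $M$ decomposes along a separating sphere (the boundary of a neighborhood of $S_a\cup J$) into two $S^2\times S^1$ summands. Extending this decomposition across the collar together with $\mathcal{N}$'s own structure exhibits the required cut $3$--disk; alternatively one recognizes $\mathcal{N}$ as the region between the pushed-in spheres. For injectivity, suppose two spines $\Gamma,\Gamma'$ give isotopic pairs. Realize that isotopy by an ambient isotopy of $M$, and implant it through the collar as a boundary-fixing diffeotopy of $\mathcal{N}$: isotope in a collar layer and damp out to the identity at $\partial\mathcal{N}$. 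This makes the bells agree. The bars $I,I'$ are then arcs between the same spheres in the simply connected manifold $\mathcal{N}$ minus the bells, whose $\pi_1$ is generated by meridians. Using the light-bulb style freedom (sliding an endpoint around a bell changes the homotopy class by a meridian), the arcs become homotopic and hence isotopic rel ends by the arc argument of Lemma \ref{framed_arc_h}. The framing of the neighborhood is not needed, since a spine is just the triple.

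The main obstacle is well-definedness with respect to boundary-fixing diffeotopy. A diffeotopy of $\mathcal{N}$ fixing $\partial\mathcal{N}$ moves the interior bells, and we must see that the resulting boundary pair changes only by isotopy in $M$. Here I would rely on the homological invariant: a boundary-fixing diffeotopy acts trivially on $H_2(\mathcal{N})$, combined with the uniqueness of sphere pairs in $M$ with given homology classes. That uniqueness statement is itself the delicate input. I would try to prove it by surgering $M$ along $S_a\sqcup S_b$ to $S^3$ and invoking Cerf/Alexander uniqueness of embedded balls.
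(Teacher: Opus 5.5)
\textbf{Main gap: the bar step in injectivity.} Once the bells agree, the complement is not simply connected: $\pi_1(\mathcal{N}\setminus(P_a\sqcup P_b),q_a)=\langle\mu_a,\mu_b\rangle\cong\mathbb{F}_2$ is free and nonabelian. The discrepancy between $I$ and $I'$ is an element $\tau$ of this group.

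Sliding endpoints around the bells, i.e.\ rotating tubular neighborhoods of $P_a$ and $P_b$, only replaces $\tau$ by $\mu_a^k\tau\mu_b^l$. So these moves keep $\tau$ inside the double coset $\langle\mu_a\rangle\,\tau\,\langle\mu_b\rangle$. They cannot trivialize, say, $\tau=\mu_a\mu_b\mu_a^{-1}\mu_b^{-1}$.

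In the bar step you never use that $\Gamma'$ is itself a spine. Your argument would therefore equally show that every legal barbell with the same bells is diffeotopic to $\Gamma$, and that is false. For the $\tau$ above, the whiskered meridians $\mu_a$ and $\tau\mu_b\tau^{-1}$ do not generate $\mathbb{F}_2$: under $\mu_a\mapsto(12)$, $\mu_b\mapsto(23)$ in $S_3$, both go to $(12)$. So that barbell is not a spine.

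The missing idea, which is the heart of Lemma \ref{classification_spine_injective}, is to use the spine structure of $\Gamma'$. The meridians whiskered along $I'$, namely $\mu_a$ and $\mu'_b=\tau\mu_b\tau^{-1}$, must also generate $\langle\mu_a,\mu_b\rangle$. A free-group argument then forces $\tau=\mu_a^k\mu_b^l$. Only after that do the bell rotations reduce to $\tau=1$, and the arc argument finishes.

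\textbf{Second gap: surjectivity.} This step is not supported as written. The sphere in $M$ that splits off the $S_a$ summand does not bound a ball in $M$. Nothing in ``$\mathcal{N}$'s own structure'' shows that it bounds a neat $3$--disk in $\mathcal{N}$ splitting off $S^2\times D^2$ summands with cores $S_a$ and $S_b$; that structure provides such a disk only for the standard cut system.

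What you need is a diffeomorphism of $\mathcal{N}$ whose boundary restriction carries a standard cut system to one isotopic to $(S_a,S_b)$. Equivalently, orientation-preserving diffeomorphisms of $\mathcal{N}$ must realize all of $\mathrm{GL}(H_2(\mathcal{N};\Integral))$ on the bases $([P_a],[P_b])$ of spines. The paper proves exactly this in Lemma \ref{classification_spine_surjective}. It uses reflections of the summands, the swap of summands, and, in the model $(D^3\setminus\mathrm{int}(Q_a\sqcup Q_b))\times[-1,1]$, replacing a bell by a copy of $\partial D^3$. These moves generate $\mathrm{GL}(2,\Integral)$, and the homological classification of cut systems (Remark \ref{classification_spine_remark}) then yields every pair.

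\textbf{Well-definedness.} This part agrees with the paper, since it rests on that same classification. Note, though, that surgery to $S^3$ plus Alexander/Cerf only gives a diffeomorphism of $M$ between two cut systems. Upgrading it to an isotopy needs the injectivity of $\mathrm{Out}(\mathbb{F}_2)\to\mathrm{GL}(2,\Integral)$ together with Laudenbach's theorem that sphere twists act trivially on embedded spheres.
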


\begin{remark}\label{classification_spine_remark}
	For any smooth $3$--manifold $M$ diffeomorphic to $(S^2\times S^1)^{\# n}$,
	any $n$--tuple of 
	smoothly embedded, mutually disjoint, unionwise nonseparating,
	transversely oriented $2$--spheres 
	$\mathcal{S}$ in $M$ (sometimes called a \emph{cut system})
	determines a graph-of-spaces decomposition of $M$, and hence
	a homotopically unique, continuous map $M\to \vee^n S^1$
	of $M$ onto the dual graph $\vee^n S^1$,
	which is the (standard) bouquet of $n$ labeled, oriented circles.
	The smooth isotopy classes of $\mathcal{S}$ in $M$ are classified exactly this way,
	and by the homotopy classes of continuous maps $M\to \vee^n S^1$,
	as a standard fact in $3$--manifold topology.
	The classifying set $[M,\vee^n S^1]$ naturally forms a homogeneous set modeled on
	the group 
	$$[\vee^n S^1,\vee^n S^1]\cong \mathrm{Out}\left(\pi_1(\vee^n S^1)\right)\cong\mathrm{Out}(\mathbb{F}_n),$$
	where $\mathbb{F}_n$ denotes the (standard) free group of rank $n$.
	The induced linear automorphic action of $\mathrm{Out}(\mathbb{F}_n)$ on the abelianization of $\mathbb{F}_n$
	determines a canonical surjective group homomorphism
	$\mathrm{Out}\left(\mathbb{F}_n\right)\to \mathrm{GL}(n,\Integral)$.
	The kernel of this homomorphism is analogous to 
	the Torelli subgroup in the mapping class group of an orientable closed surface.
	The kernel is nontrivial (and very complicated) for $n\geq3$, 
	but it is trivial for $n=0,1,2$.
	In particular, the following canonical group isomorphism is well-known 
	(see \cite[Chapter I, Proposition 4.5]{Lyndon--Schupp_book}).
	$$\mathrm{Out}(\mathbb{F}_2)\cong\mathrm{GL}(2,\Integral).$$	
	For further developments about the mapping class group of $(S^2\times S^1)^{\# n}$,
	see Brendle--Broaddus--Putman \cite{BBP_mcg}.
\end{remark}

	%

\begin{corollary}[Classifying barbell spines by homological bases]\label{classification_spine_GL}
	Let $\mathcal{N}$ be a $4$--manifold diffeomorphic to $(S^2\times D^2)^{\natural 2}$.
	Then,
	the boundary-fixing diffeotopy classes of barbell spines in $\mathcal{N}$ 
	correspond bijectively to the bases 
	of free abelian group $H_2(\mathcal{N};\Integral)\cong\Integral^2$, 
	such that $\Gamma=P_a\cup I\cup P_b$ corresponds to $[P_a],[P_b]$.
	In particular, they naturally form a homogeneous set modeled the group
	$\mathrm{GL}(H_2(\mathcal{N};\Integral))$, 
	which is isomorphic to $\mathrm{GL}(2,\Integral)$.
\end{corollary}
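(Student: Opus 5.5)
The corollary should follow by composing Theorem \ref{classification_spine} with the classification of cut systems recalled in Remark \ref{classification_spine_remark}, for $n=2$. Theorem \ref{classification_spine} identifies boundary-fixing diffeotopy classes of barbell spines $\Gamma=P_a\cup I\cup P_b$ with diffeotopy classes of ordered pairs $(S_a,S_b)$ of disjoint, unionwise nonseparating, oriented $2$--spheres in $M=\partial\mathcal{N}\cong(S^2\times S^1)^{\#2}$. Under this identification $P_a\sqcup P_b$ is isotopic in $\mathcal{N}$ to $S_a\sqcup S_b$, so $[P_a]=[S_a]$ and $[P_b]=[S_b]$ under the isomorphism $H_2(\partial\mathcal{N};\Integral)\to H_2(\mathcal{N};\Integral)$. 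This map is an isomorphism because $\mathcal{N}$ is a $0$--handle plus two $1$--handles plus two $2$--handles attached along unknotted, unlinked $0$--framed circles, so the inclusion kills nothing in degree $2$; alternatively, check it directly for the factor spheres. It therefore suffices to show that $(S_a,S_b)\mapsto([S_a],[S_b])$ is a bijection from diffeotopy classes of such ordered pairs onto ordered bases of $H_2(M;\Integral)\cong\Integral^2$.

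\emph{Well-definedness into bases.} I will compute in $H_2(M;\Integral)\cong H^1(M;\Integral)=\mathrm{Hom}(\pi_1 M,\Integral)$. The pair $(S_a,S_b)$ determines a classifying map $c\colon M\to S^1\vee S^1$ that is $\pi_1$--surjective; its existence uses the nonseparating hypothesis. Under Poincar\'e duality, $[S_a]$ and $[S_b]$ are the pullbacks under $c$ of the two coordinate classes of $H^1(S^1\vee S^1)$. Since $c_*$ is surjective on $H_1$, $c^*$ is injective on $H^1$, and both groups are $\Integral^2$ ($b_1(M)=2$), so $c^*$ is an isomorphism and the pair $([S_a],[S_b])$ is a basis.

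\emph{Bijectivity.} By Remark \ref{classification_spine_remark}, classes of cut systems correspond to homotopy classes in $[M,\vee^2S^1]$, which form a homogeneous set under $\mathrm{Out}(\mathbb{F}_2)$. Ordered bases of $H_2(M)$ form a homogeneous set under $\mathrm{GL}(2,\Integral)$. The assignment above is equivariant with respect to the canonical map $\mathrm{Out}(\mathbb{F}_2)\to\mathrm{GL}(2,\Integral)$, because precomposing with a self-map of $\vee^2S^1$ acts on $c^*$ by the induced automorphism of $H^1$. An equivariant map between homogeneous sets along a group isomorphism is a bijection once the stabilizers match. Since $\mathrm{Out}(\mathbb{F}_2)\cong\mathrm{GL}(2,\Integral)$ and both actions are free, the map is bijective. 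The homogeneous structure on spines then transports from $\mathrm{GL}(H_2(\mathcal{N};\Integral))$ acting on bases.

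\emph{Main obstacle.} The delicate point is the bookkeeping of orientations. Remark \ref{classification_spine_remark} uses transversely oriented spheres and labeled, oriented circles, whereas spines have oriented bells. Since $M$ is oriented, an orientation of a sphere is equivalent to a transverse orientation, so the ordered and oriented data match exactly. Freeness of the $\mathrm{Out}(\mathbb{F}_2)$ action on $[M,\vee^2S^1]$ also needs care, in particular the fact that homotopy classes are determined by the induced map $\pi_1M=\mathbb{F}_2\to\mathbb{F}_2$ up to conjugacy. I would cite this as the standard fact stated in that remark.
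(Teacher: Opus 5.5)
Your proposal is correct and follows the paper's own route. The paper likewise derives the corollary immediately from Theorem~\ref{classification_spine}, the inclusion-induced isomorphism $H_2(\partial\mathcal{N};\Integral)\cong H_2(\mathcal{N};\Integral)$, and the cut-system classification with $\mathrm{Out}(\mathbb{F}_2)\cong\mathrm{GL}(2,\Integral)$ from Remark~\ref{classification_spine_remark}; you simply make the equivariance explicit.

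Two small slips need fixing:
\begin{enumerate}
\item $(S^2\times D^2)^{\natural 2}$ is a $0$--handle plus two $2$--handles on a $0$--framed unlink, with no $1$--handles. Your fallback check on the factor spheres is fine.
\item An injective map $\Integral^2\to\Integral^2$ need not be onto. Argue instead that $c_*\colon H_1(M;\Integral)\to H_1(S^1\vee S^1;\Integral)$ is a surjection between free abelian groups of rank $2$, hence an isomorphism, and then dualize.
\end{enumerate}
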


\begin{proof}
	This follows immediately from Theorem \ref{classification_spine} 
	and the natural isomorphism $H_2(\partial\mathcal{N};\Integral)\cong H_2(\mathcal{N};\Integral)$
	induced by the inclusion $\partial\mathcal{N}\to \mathcal{N}$,
	together with well-known facts as mentioned in Remark \ref{classification_spine_remark}.
\end{proof}

\begin{corollary}[Characterizing a barbell spine]\label{characterization_spine}
	Let $\mathcal{N}$ be a smooth $4$--manifold diffeomorphic to $(S^2\times D^2)^{\natural 2}$.
	Then, a legally embedded barbell $\Gamma=P_a\cup I\cup P_b$ 
	forms a barbell spine of $\mathcal{N}$,
	if and only if
	$\Gamma$ sits in some parallel copy of $\partial \mathcal{N}$, up to boundary-fixing diffeotopy,
	and meanwhile, $[P_a],[P_b]$ form a basis of $H_2(\mathcal{N};\Integral)$.
\end{corollary}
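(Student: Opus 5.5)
We derive Corollary \ref{characterization_spine} from Theorem \ref{classification_spine} together with the construction in Example \ref{example_spine}, proving the two directions separately.

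\emph{Only if.} Let $\Gamma=P_a\cup I\cup P_b$ be a barbell spine. Since $\mathcal{N}$ decomposes as $(D^2\times P_a)\natural(D^2\times P_b)$, the classes $[P_a],[P_b]$ form a basis of $H_2(\mathcal{N};\Integral)$. By Theorem \ref{classification_spine}, $P_a\sqcup P_b$ can be isotoped into $\partial\mathcal{N}$, to spheres $S_a\sqcup S_b$ that are unionwise nonseparating.
\begin{enumerate}
\item By diffeotopy extension, we first push $P_a\sqcup P_b$ into a parallel copy $\partial_\epsilon\mathcal{N}$ of $\partial\mathcal{N}$ inside a collar. We do this by a boundary-fixing diffeotopy, carrying the bar along.
\item Next we arrange that the bar also lies in $\partial_\epsilon\mathcal{N}$. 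The bar is an arc in a $4$--manifold connecting the two spheres. Arcs with fixed endpoint normal directions are isotopic rel endpoints once they are homotopic, by general position (dimension $1$ in dimension $4$), up to the framing subtlety already handled in Lemma \ref{framed_arc_h}.
\item The complement $\mathcal{N}\setminus(P_a\sqcup P_b)$ is simply connected, so every such arc is homotopic rel endpoints to an arc $J$ in $\partial_\epsilon\mathcal{N}$ avoiding the spheres. This holds because the unionwise nonseparating condition makes $\partial_\epsilon\mathcal{N}\setminus(S_a\sqcup S_b)$ connected.
\end{enumerate}
An isotopy of the bar supported away from the bells, extended to a boundary-fixing diffeotopy, then places $\Gamma$ in $\partial_\epsilon\mathcal{N}$. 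Alternatively, this direction follows from the bijection in Theorem \ref{classification_spine}. The classes of spines are determined by the sphere pairs in $\partial\mathcal{N}$, so $\Gamma$ is diffeotopic to the spine built from $(S_a,S_b,J)$ as in Example \ref{example_spine}, provided we know that construction yields a spine. That is the content of the converse.

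\emph{If.} Suppose $\Gamma$ lies in a parallel copy $\partial_\epsilon\mathcal{N}$ and $[P_a],[P_b]$ is a basis. Identify $\partial_\epsilon\mathcal{N}\cong\partial\mathcal{N}\cong(S^2\times S^1)^{\#2}$. Homologically independent disjoint spheres are unionwise nonseparating. Indeed, if their union separated, some nontrivial combination $\pm[S_a]\pm[S_b]$, or a single class, would bound, contradicting independence. So by Remark \ref{classification_spine_remark}, $(P_a,P_b)$ is a cut system, and there is a diffeomorphism of $\partial\mathcal{N}$ sending it to a standard pair.

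We must then show that this boundary diffeomorphism extends over $\mathcal{N}$, or at least that the pair is realized by some standard spine. This is the main obstacle. We would handle it using $\mathrm{Out}(\mathbb{F}_2)\cong\mathrm{GL}(2,\Integral)$ as follows.
\begin{enumerate}
\item Take a standard spine $\Gamma_0$ built from factor spheres. Precomposing with a diffeomorphism of $\mathcal{N}$ realizing any element of $\mathrm{GL}(2,\Integral)$ on $H_2$ (handle slides and swaps of $(S^2\times D^2)^{\natural2}$, and orientation reversal of factors) produces spines in $\partial_\epsilon\mathcal{N}$ realizing every basis.
\item By the classification of cut systems, any two cut systems inducing the same basis are isotopic in $\partial\mathcal{N}$. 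Hence the isotopy carries $(P_a,P_b)$ to the bells of such a spine.
\item The bar is then fixed, as in the only-if direction, by connectivity of the complement of the spheres in $\partial_\epsilon\mathcal{N}$ together with the arc-uniqueness argument.
\end{enumerate}
Since diffeomorphisms carry spines to spines, it follows that $\Gamma$ is a barbell spine.
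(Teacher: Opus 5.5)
The part of your \emph{if} direction that handles the bells agrees with the paper. You realize every basis by a spine via diffeomorphisms of $\mathcal{N}$, then use $\mathrm{Out}(\mathbb{F}_2)\cong\mathrm{GL}(2,\Integral)$ and the classification of cut systems to isotope $P_a\sqcup P_b$, inside the parallel copy $M$, onto the bells of such a spine $\Gamma'$.

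\textbf{The gap is in matching the bars.} Your step 3 hands this to the \emph{only if} argument, which rests on the claim that $\mathcal{N}\setminus(P_a\sqcup P_b)$ is simply connected. That claim is false. Each bell has a $2$--dimensional normal disk bundle, hence a meridian, and in fact $\pi_1(\mathcal{N}\setminus(P_a\sqcup P_b))\cong\mathbb{F}_2$, freely generated by the meridians $\mu_a,\mu_b$ (as in the proof of Lemma \ref{classification_spine_injective}). A homotopy of the bar in $\mathcal{N}$ alone is useless, because generically it sweeps through the bells. General position only tells you that bars homotopic rel endpoints \emph{in the complement of the bells} are isotopic, and those relative classes form an $\mathbb{F}_2$--torsor.

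The position of the bar genuinely matters. Suppose a bar differs from the bar of a spine with the same bells by the commutator $\mu_a\mu_b\mu_a^{-1}\mu_b^{-1}$. Then the bells still give a basis, but the barbell is not a spine: the meridians whiskered along the two bars no longer generate $\mathbb{F}_2$ (compare the parenthetical remark in the proof of Lemma \ref{classification_spine_injective}). So your claim that every bar is homotopic, off the bells, into $\partial_\epsilon\mathcal{N}$ is false in general. If it held, your \emph{if} direction would make this non-spine a spine.

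\textbf{What the \emph{if} direction actually needs} is weaker and specific to bars lying in $M$.
\begin{itemize}
\item Cutting $M$ along $P_a\sqcup P_b$ gives a four-punctured $S^3$. So two bars in $M$ that approach each bell from the same side are diffeotopic in $M$ fixing the bells; knotting is undone by the light bulb trick at an end sphere.
\item However, a bar in $M$ can approach each bell from either side, giving four types. Bars of different types end on different boundary spheres of the cut-open $M$, so no isotopy inside $M$ off the bells relates them. Connectivity of $M\setminus(P_a\sqcup P_b)$ says nothing about this.
\item The paper handles the mismatch by changing the comparison spine rather than $\Gamma$. It replaces the bar $I'$ of $\Gamma'$ by an arc $\tilde{I}'\subset M$ of the same side type as $I$ that still meets the witnessing $3$--disk $B$ transversely in one point. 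Then $\tilde{\Gamma}'$ is a spine directly by Definition \ref{barbell_spine_def}, and $\Gamma$ is diffeotopic to $\tilde{\Gamma}'$.
\item Alternatively, a half-turn rotation of a tubular neighborhood of a bell, combined with the full rotations of Lemma \ref{classification_spine_injective}, would also switch sides.
\end{itemize}

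\textbf{On the \emph{only if} direction:} it needs none of this. The witnessing boundary-connect sum decomposition identifies $(\mathcal{N},\Gamma)$ with a standard model in which the spine visibly lies in a boundary-parallel copy. Your alternative route for it depends on the converse, so it inherits the gap above.
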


\begin{proof}
	The ``only if'' direction is obvious from Theorem \ref{classification_spine},
	so it remains to prove the ``if'' direction. To this end, suppose that 
	$$\Gamma=P_a\cup I\cup P_b$$
	is a legally embedded barbell in $\mathcal{N}$,
	sitting in some parallel copy $M\subset\mathrm{int}(\mathcal{N})$
	of $\partial\mathcal{N}$, such that $[P_a],[P_b]$ form a basis of $H_2(\mathcal{N};\Integral)\cong\Integral^2$.
	
	We observe that the diffeotopy class of $\Gamma$ in $M$ 
	is uniquely determined by the basis $[P_a],[P_b]$ 
	together with the sides from which $I$
	approaches $P_a$ and $P_b$ in $M$ (with $2\times2=4$ possible types in total). 
	In fact, 
	the $2$--spheres $P_a$ and $P_b$ are necessarily unionwise nonseparating in $M$,
	and the basis $[P_a],[P_b]$ of $H_2(M;\Integral)\cong H_2(\mathcal{N};\Integral)$
	readily determines the smooth isotopy class of $P_a\sqcup P_b$ in $M$ (see Remark \ref{classification_spine_GL}).
	Moreover, any arc $I'\subset M$ coincident to $I$ near $\partial I'=\partial I$ 
	and intersecting $P_a\sqcup P_b$ only in $\partial I'$ is diffeotopic to $M$ fixing $P_a\sqcup P_b$.
	This is evident by considering $\mathrm{int}(I')$ and $\mathrm{int}(I)$ in the four-punctured $3$--sphere 
	$M\setminus(P_a\sqcup P_b)$. So, the above observation follows.
	
	By Corollary \ref{classification_spine_GL},
	we can find some barbell spine of $\mathcal{N}$, denoted as
	$$\Gamma'=P'_a\cup I'\cup P'_b,$$
	such that the basis $[P'_a],[P'_b]$ agrees with $[P_a],[P_b]$.
	By definition, 
	there exists some neatly embedded $3$--disk $(B,\partial B)\subset (\mathcal{N},\partial\mathcal{N})$,
	intersecting $\Gamma'$ transversely and only at a unique point in $\mathrm{int}(I')$,
	and witnessing a boundary-connect sum decomposition of $\mathcal{N}$ into summands of the form 
	$P'_a\times D^2$ and $P'_b\times D^2$ (Definition \ref{barbell_spine_def}).
	We can further assume $B$ to intersect $M$ in a parallel copy of $\partial B$.
	
	If the side-approaching type of $I$ to $P_a$ and $P_b$ 
	agrees with that of $I'$ to $P'_a$ and $P'_b$ in $M$
	(fixing an auxiliary orientation of $M$ to speak of signed sides),
	we can infer from the above observation that $\Gamma$ and $\Gamma'$ agree up to diffeotopy of $M$, 
	and hence, agree up to boundary-fixing diffeotopy of $\mathcal{N}$,
	proving that $\Gamma$ is a barbell spine.
	Otherwise, there still exists some (obvious) arc $\tilde{I}'\subset M$ 
	connecting $P'_a$ and $P'_b$ in $M$ without intersection in $\mathrm{int}(I')$,
	such that $I'$ has the same side-approaching type to $P'_a$ and $P'_b$
	as that of $I$ to $P_a$ and $P_b$,
	and intersects $B$ transversely at a unique point.
	Therefore, the reconstructed barbell
	$$\tilde{\Gamma}'=P'_a\cup\tilde{I}'\cup P'_b$$
	still forms a barbell spine of $\mathcal{N}$,
	and $\Gamma$ agrees with $\tilde{\Gamma}'$ up to boundary-fixing diffeotopy of $\mathcal{N}$.
	So, again, $\Gamma$ is a barbell spine,	proving the ``if'' direction.	
\end{proof}

The rest of this section is devoted to the proof of Theorem \ref{classification_spine}.

\begin{lemma}\label{classification_spine_basic}
	Let $\mathcal{N}$ be a smooth $4$--manifold diffeomorphic to $(S^2\times D^2)^{\natural 2}$.
	\begin{enumerate}
	\item For any barbell spine $\Gamma=P_a\cup I\cup P_b$ of $\mathcal{N}$,
	the embedded disjoint union of $2$--spheres
	$P_a\sqcup P_b$ can be smoothly isotoped into $\partial\mathcal{N}$.
	\item For any barbell spines $\Gamma_0$ and $\Gamma_1$ of $\mathcal{N}$,
	there exists some orientation-preserving self-diffeomorphism of $\mathcal{N}$
	which transforms $\Gamma_0$ to $\Gamma_1$.
	\end{enumerate}
\end{lemma}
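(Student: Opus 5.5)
Both statements will come straight from Definition \ref{barbell_spine_def}. A barbell spine comes with a neatly embedded $3$--disk $B$ that meets $\Gamma$ transversely in one point of $\mathrm{int}(I)$ and cuts $\mathcal{N}$ into two summands. These summands are identified diffeomorphically with $P_a\times D^2$ and $P_b\times D^2$. I will use this witnessing structure to build explicit models.

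\emph{Step 1 (normal form).} Cutting $\mathcal{N}$ along $B$ gives pieces $\mathcal{N}_a$ and $\mathcal{N}_b$, with $\mathcal{N}_{a/b}\cong P_{a/b}\times D^2$. I first claim the diffeomorphism can be chosen so that $P_{a/b}$ corresponds to $P_{a/b}\times\{0\}$. The definition only says the summand has "the form $P_a\times D^2$", so I will read it as saying that $P_a$ is the core. If only a diffeomorphism type were given, I would instead argue as follows. $P_a$ is an embedded sphere generating $H_2(\mathcal{N}_a)\cong\mathbb{Z}$ with trivial normal bundle. Its tubular neighborhood $\nu P_a$ then has complement an $h$--cobordism from $S^2\times S^1$ to $\partial\mathcal{N}_a$. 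This step is the expected obstacle, since a $4$--dimensional $h$--cobordism need not be smoothly trivial, so the reading of the definition really matters. Taking the intended reading, $P_a=P_a\times\{0\}$ and $P_b=P_b\times\{0\}$.

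Next I normalize the pieces $I\cap\mathcal{N}_{a/b}$. Each is an arc from a point of $P_{a/b}$ to the point $I\cap B$ on the boundary. Since $\pi_1(P\times D^2)=1$ and arcs in a $4$--manifold are unknotted up to homotopy, any two such arcs with the same endpoints are isotopic rel endpoints. Any two such arcs with the same endpoints are in fact isotopic rel endpoints even while staying disjoint from $P$, because $\pi_1((P\times D^2)\setminus P)\cong\pi_1(P\times(D^2\setminus 0))\cong\mathbb{Z}$ and the meridian loop can be undone by spinning the arc around its endpoint on $P$. Hence I may take $I\cap\mathcal{N}_a$ to be a radial arc $\{p\}\times[0,1]$ ending in the attaching disk $B\subset\partial\mathcal{N}_a$, and similarly on the $b$ side.

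\emph{Step 2 (part (2)).} Apply Step 1 to $\Gamma_0$ and to $\Gamma_1$. This gives diffeomorphisms of each summand carrying the normalized triple $(P,\text{radial arc},\text{attaching disk})$ for $\Gamma_0$ to that for $\Gamma_1$. To respect the orientations of the bells, I use that $P\times D^2$ has a diffeomorphism reversing $P$ and reversing $D^2$. That diffeomorphism preserves the ambient orientation and the orientation of the core, and can be arranged to fix the radial arc and the attaching disk up to isotopy, so $P\times\{0\}$ may be given either orientation. I then match the two $3$--disks by a diffeomorphism of the boundary that is isotopic to the identity near each disk, and glue. The result is an orientation-preserving self-diffeomorphism of $\mathcal{N}$ carrying $\Gamma_0$ to $\Gamma_1$.

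\emph{Step 3 (part (1)).} In the normal form, $P_a=P_a\times\{0\}$ is isotopic inside $\mathcal{N}_a$ to $P_a\times\{x\}$ for some $x\in\partial D^2$. I choose $x$ so that $P_a\times\{x\}$ avoids the gluing disk $B$ on $\partial\mathcal{N}_a$, which is possible after shrinking $B$ off a level sphere. Then $P_a\times\{x\}\subset\partial\mathcal{N}$, and the same holds for $P_b$. The two isotopies take place in the disjoint pieces $\mathcal{N}_a$ and $\mathcal{N}_b$, so together they isotope $P_a\sqcup P_b$ into $\partial\mathcal{N}$ through disjoint spheres. This proves (1).
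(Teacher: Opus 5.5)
Your proposal is correct and takes the same approach as the paper, which disposes of the lemma in one line (``obvious from Definition~\ref{barbell_spine_def}''); you unpack exactly that reading: summands are product neighborhoods with the bells as cores, the bar is normalized in each summand by spinning its germ around the meridian, and the pieces are glued along the cut $3$--disk. One small slip: the map (reflection of $P$)$\times$(conjugation of $D^2$) preserves the ambient orientation but \emph{reverses} the orientation of the core $P\times\{0\}$, and that reversal is exactly what lets you match the bell orientations.
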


\begin{proof}
	Obvious from our definition of a barbell spine (Definition \ref{barbell_spine_def}).
\end{proof}

\begin{lemma}\label{classification_spine_injective}
	Let $\mathcal{N}$ be a smooth $4$--manifold diffeomorphic to $(S^2\times D^2)^{\natural 2}$.
	If $\Gamma=P_a\cup I\cup P_b$ and $\Gamma'=P'_a\cup I'\cup P'_b$ are both barbell spines of $\mathcal{N}$,
	and if $P_a\sqcup P_b$ is smoothly isotopic to $P'_a\sqcup P'_b$ in $\mathcal{N}$,
	preserving the labeling and the bell orientations, 
	then $\Gamma$ and $\Gamma'$ agree	up to boundary-fixing diffeotopy of $\mathcal{N}$.
\end{lemma}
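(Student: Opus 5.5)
By diffeotopy extension, I first replace $\Gamma'$ by its image under some $h\in\mathrm{Diffeo}_0(\mathcal{N},\partial\mathcal{N})$ extending the given isotopy of the bells. The isotopy takes place in $\mathrm{int}(\mathcal{N})$, so after cutting it off near $\partial\mathcal{N}$ it extends to a diffeotopy fixing a collar. This lets me assume
$$P_a=P'_a,\qquad P_b=P'_b,$$
with the same orientations. I then arrange the bars to agree near their endpoints. The bars $I$ and $I'$ leave $P_a$ and $P_b$ along normal directions, and the unit normal sphere bundle of each bell is $S^2\times S^1$, which is connected. So a diffeotopy supported near the bells that rotates normal directions and slides points along the bells makes $I'$ coincide with $I$ near $\partial I$.

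The main step is to isotope $\mathrm{int}(I')$ to $\mathrm{int}(I)$ in the complement $\mathcal{N}\setminus(P_a\sqcup P_b)$, keeping a neighborhood of the endpoints fixed. Here I use the spine structure of $\Gamma$. Let $\nu=\nu(P_a)\sqcup\nu(P_b)$ be an open tubular neighborhood of the bells, and let $C=\mathcal{N}\setminus\nu$. Since $\mathcal{N}=(P_a\times D^2)\natural(P_b\times D^2)$ along a cut $3$--disk crossing $I$ once, $C$ is diffeomorphic to a collar of its two inner boundary components $S^2\times S^1$ joined by a $1$--handle, together with the collar of $\partial\mathcal{N}$. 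The key point is that $C$ is simply connected. Removing a codimension-two sphere with trivial normal bundle adds a meridian generator, but this meridian bounds the normal disk... That is false as stated; the honest computation goes through van Kampen: $\pi_1(S^2\times S^1)=\mathbb{Z}$ is generated by the meridian circle $\partial D^2$ of $P_a\times D^2$, and this meridian bounds in $\partial\mathcal{N}$. Indeed, $P_a\times D^2\setminus\nu(P_a)\cong P_a\times(\text{annulus})$, whose $\pi_1$ is generated by the meridian, and the meridian is freely homotopic to the boundary circle $\{pt\}\times\partial D^2$, which lies in $P_a\times\partial D^2\subset\partial\mathcal{N}$. So $\pi_1(C)\cong\mathbb{Z}*\mathbb{Z}$, and $C$ is \emph{not} simply connected.

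Two arcs in the $4$--manifold $C$ with fixed endpoints are therefore isotopic rel endpoints if and only if they are homotopic rel endpoints. This holds by Lemma \ref{framed_arc_h}, applied in the complement of small balls around the endpoints; the framing is irrelevant for unframed arcs, and embedded arcs in dimension $4$ are isotopic when homotopic. So the whole issue is the element $[I'\cdot\bar I]\in\pi_1(C)\cong\langle\mu_a,\mu_b\rangle$, and I must kill it. Write $I'\simeq \mu_a^{k}\cdots I\cdots\mu_b^{l}$ as a word. I correct it with diffeotopies fixing $\partial\mathcal{N}$ and preserving the bells. Spinning the bell $P_a$ around its meridian, by a diffeotopy supported in $\nu(P_a)\cup$ collar, in the form of a rotation of the $D^2$ factor, multiplies the path class at its $a$--end by powers of $\mu_a$. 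This rotation is not boundary-fixing on $P_a\times D^2$, but I can compose it with a twist damped in the $D^2$ radius. The resulting map restricted to $P_a\times D^2$ is $(x,z)\mapsto(x,e^{i\theta(|z|)}z)$, with $\theta=2\pi k$ near $0$ and $\theta=0$ near $|z|=1$. It fixes $P_a$ setwise, fixes the boundary, and is isotopic to the identity rel boundary by scaling $k\to0$; it changes the relative homotopy class of the bar by $\mu_a^k$ at its end. Similarly at $b$. General words in the free group are handled by instead sliding the endpoint of the bar once around a loop in $P_a$; this is trivial since $P_a$ is simply connected. The honest remaining freedom is therefore only the double coset $\langle\mu_a\rangle\backslash\pi_1(C)/\langle\mu_b\rangle$.

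I expect this last reduction to be the main obstacle. I would handle it with the second spine structure. Since $\Gamma'$ is itself a spine, the cut disk $B'$ meets $I'$ once, and that decomposition makes $I'$ the \emph{core} of the $1$--handle. In the free product $\pi_1(C)=\langle\mu_a\rangle*\langle\mu_b\rangle$, an arc that is the core of a $1$--handle exhibiting the boundary-connect sum must represent a reduced word of length zero in the double coset. Otherwise the loop $\mu_a$ composed with the bar and $\mu_b$ would fail to generate freely in the pattern forced by the decomposition, which is detected by Lemma \ref{classification_spine_basic}(2). Concretely, I would push a diffeomorphism carrying $\Gamma$ to $\Gamma'$ (which exists by Lemma \ref{classification_spine_basic}) and compare normal forms. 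After the damped twists, $I'\simeq I$ rel endpoints in $C$, and Lemma \ref{framed_arc_h} finishes the argument, extended by the identity on $\nu$.
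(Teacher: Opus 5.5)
Your proposal is essentially the paper's proof. You make the bells coincide and the bars agree near their ends, identify $\pi_1(\mathcal{N}\setminus(P_a\sqcup P_b))$ as the free group on the meridians $\mu_a,\mu_b$, use full rotations of tubular neighborhoods of the bells (your damped twists) to replace $\tau=[I'\cdot\bar I]$ by $\mu_a^k\tau\mu_b^l$, and finish with homotopy-implies-isotopy for arcs (Lemma \ref{framed_arc_h}) once $\tau$ is trivial.

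The crux, which you leave vague, is stated precisely in the paper as follows. Because $\Gamma'$ is a spine (via the diffeomorphism of Lemma \ref{classification_spine_basic}(2)), $\mu_a$ and $\tau\mu_b\tau^{-1}$ generate $\pi_1$. Hence $\mu_a\mapsto\mu_a$, $\mu_b\mapsto\tau\mu_b\tau^{-1}$ is an automorphism of $\mathbb{F}_2$ that is trivial on $H_1$. By Nielsen it is inner, and it must be conjugation by some $\mu_a^k$, which gives $\tau\in\mu_a^k\langle\mu_b\rangle$. Also drop your stray claim that the meridian bounds in $\partial\mathcal{N}$; it is essential there, and your corrected computation $\pi_1(C)\cong\mathbb{Z}*\mathbb{Z}$ already overrides it.
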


\begin{proof}
	Possibly after some boundary-fixing diffeotopy of $\mathcal{N}$, 
	we may assume $\Gamma'=P_a\cup I'\cup P_b$,
	such that $I'$ coincides with the arc $I$ 
	on some neighborhood of $\partial I'=\partial I$.
	Pick some point $q_a\in I$ near $P_a$, such that 
	the subarc between $P_a\cap \partial I$ to $q_a$
	coincides with a subarc of $I'$, and similarly, some point $q_b$ near $P_b$.
	Denote the subarcs on $I$ and $I'$ between $q_a$ and $q_b$ as $J$ and $J'$, respectively.
	Fix an orientation of $\mathcal{N}$.
		
	Since $\Gamma$ forms a barbell spine of $\mathcal{N}$,
	$\mathcal{N}\setminus(P_a\sqcup P_b)$ is clearly homotopy equivalent to 
	$(P_a\times S^1)\vee(P_b\times S^1)$ (see Definition \ref{barbell_spine_def}).
	Therefore, we observe
	$$\pi_1\left(\mathcal{N}\setminus(P_a\sqcup P_b),q_a\right)=\langle \mu_a,\mu_b\rangle\cong\mathbb{F}_2,$$
	where $\mu_a$ and $\mu_b$ are free generators represented by small meridian loops
	about the bells $P_a$ and $P_b$, with whisker paths connecting to $q_a$ along the bar $I$,
	respectively. 
	Similarly, using the whisker paths along the bar $I'$, we see that
	the same group is also generated by $\mu_a$ and $\mu'_b$,
	where $\mu'_b$ is a conjugate to $\mu_b$. 
	More precisely, we obtain
	$$\mu'_b=\tau\mu_b\tau^{-1},$$
	where $\tau\in\langle\mu_a,\mu_b\rangle$, 
	represented	by the closed path going first from $q_a$ to $q_b$ along $J'$,
	and then back to $q_a$ along $J$. 
	Note that we have used the assumption that $\Gamma'$ is a barbell spine of $\mathcal{N}$
	for the above claim, for otherwise we would not be able to 
	conclude that $\mu_a$ and $\mu'_b$ generate $\langle \mu_a,\mu_b\rangle$
	(which is false, indeed).
	It is an easy exercise of word processing in free groups 
	to check  
	$$\tau=\mu_a^k\mu_b^l,$$
	for some $k,l\in\Integral$, 
	so as to make $\langle \mu_a,\mu_b\rangle=\langle \mu_a,\mu'_b\rangle$.
	
	In the special case $k=l=0$, 
	$J$ is homotopic to $J'$ relative to $\partial J=\partial J'=\{q_a,q_b\}$
	in $\mathcal{N}\setminus(P_a\sqcup P_b)$.
	In this case, we can $I$ is boundary-fixing diffeotopic to $I'$ in $\mathcal{N}$
	fixing some neighborhood of $P_a\sqcup P_b$ all the time 
	(see Theorem \ref{uniqueness_1hdls} or Lemma \ref{framed_arc_h}),
	so $\Gamma$ is boundary-fixing diffeotopic to $\Gamma'$ in $\mathcal{N}$,
	as desired. 
	
	For the general case ($k,l\in\Integral$), we observe that 
	rotating a tubular neighborhood of $P_a$ about $P_a$ for any number of full rounds
	can be realized with a boundary-fixing diffeotopy of $\mathcal{N}$.
	Using a tubular neighborhood of $P_a$ containing the subarc 
	of $I'$ from the endpoint on $P_a$ to $q_a$, 
	the above rotation will have the effect of modifying the arc $I'$,
	such that $k$ changes by adding any integer.
	Therefore, up to boundary-fixing diffeotopy of $\mathcal{N}$,
	we can adjust $k$, and similarly $l$, to be $0$ 
	without moving $P_a\sqcup P_b$.
	This reduces the general case to the special case $k=l=0$,
	which we have addressed.	
\end{proof}

\begin{lemma}\label{classification_spine_surjective}
	Let $\mathcal{N}$ be a smooth $4$--manifold diffeomorphic to $(S^2\times D^2)^{\natural 2}$.
	For any the ordered pair $S_a,S_b$ of 
	smoothly embedded, mutually disjoint, unionwise nonseparating, oriented $2$--spheres in $\partial \mathcal{N}$,
	there exists some barbell spine $\Gamma=P_a\cup I\cup P_b$ of $\mathcal{N}$,
	such that $P_a\sqcup P_b$ is smoothly isotopic to $S_a\sqcup S_b$ in $\mathcal{N}$,
	preserving the labeling and the bell orientations.
\end{lemma}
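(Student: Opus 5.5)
The plan is to reduce to a standard cut system on $\partial\mathcal{N}$ and then transport a standard barbell spine by a diffeomorphism of $\mathcal{N}$ that realizes the needed change of cut system on the boundary.

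Fix a diffeomorphism $\mathcal{N}\cong(S^2\times D^2)^{\natural2}$. This provides a standard barbell spine $\Gamma^0=P^0_a\cup I^0\cup P^0_b$, whose bells are the factor spheres $S^2\times 0$ of the two summands and whose bar crosses the decomposition $3$--disk once. Push $P^0_a\sqcup P^0_b$ radially out to $S^2\times\partial D^2$ in each summand. This gives an ordered pair of disjoint, unionwise nonseparating, oriented spheres $S^0_a,S^0_b\subset\partial\mathcal{N}\cong(S^2\times S^1)^{\#2}$, isotopic in $\mathcal{N}$ to $P^0_a\sqcup P^0_b$.

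The key step is to find an orientation-preserving diffeomorphism $F\colon\mathcal{N}\to\mathcal{N}$ with $F(S^0_a\sqcup S^0_b)$ isotopic in $\partial\mathcal{N}$ to $S_a\sqcup S_b$, respecting labels and orientations. By Remark \ref{classification_spine_remark}, isotopy classes of such ordered, oriented cut systems in $\partial\mathcal{N}$ form a homogeneous set modeled on $\mathrm{Out}(\mathbb{F}_2)\cong\mathrm{GL}(2,\Integral)$. The classes are detected by the basis $[S_a],[S_b]$ of $H_2(\partial\mathcal{N};\Integral)$. It therefore suffices to show that every element of $\mathrm{GL}(H_2(\mathcal{N};\Integral))$ is induced by a self-diffeomorphism of $\mathcal{N}$ preserving the boundary cut-system structure. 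I would check this on generators of $\mathrm{GL}(2,\Integral)$:
\begin{itemize}
\item the swap of the two summands, given by an involution of $\mathcal{N}$, which is orientation-preserving as noted in the proof of Lemma \ref{Mod_nS2D2_rel};
\item the reversal of one bell orientation, given by $(x,z)\mapsto(r(x),\bar z)$ on one $S^2\times D^2$ summand with $r$ a reflection, which preserves orientation and extends across the boundary-connected sum after isotoping the gluing disk;
\item a transvection $[S_a]\mapsto[S_a]+[S_b]$, realized by a handle slide of one $2$--handle over the other in the handle description of $\mathcal{N}$ as a $0$--handle with two $0$--framed unlinked $2$--handles. Since the framings are $0$ and the linking number is $0$, the result is again a $0$--framed two-component unlink, up to isotopy of $D^4$ with attached handles. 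This yields a diffeomorphism of $\mathcal{N}$ acting on $H_2$ by the transvection.
\end{itemize}
Because each such diffeomorphism of $\mathcal{N}$ restricts to $\partial\mathcal{N}$ and acts compatibly on $H_2(\partial\mathcal{N})\cong H_2(\mathcal{N})$, the classification in Remark \ref{classification_spine_remark} gives that $F(S^0_a\sqcup S^0_b)$ is isotopic to $S_a\sqcup S_b$ in $\partial\mathcal{N}$.

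To finish, set $\Gamma=F(\Gamma^0)$. Being a barbell spine is invariant under diffeomorphisms of $\mathcal{N}$, since the witnessing $3$--disk is transported by $F$. The bells $F(P^0_a)\sqcup F(P^0_b)$ are isotopic in $\mathcal{N}$ to $F(S^0_a\sqcup S^0_b)$, and that in turn is isotopic to $S_a\sqcup S_b$ through a collar of $\partial\mathcal{N}$, as required.

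I expect the main obstacle to be the transvection generator: making sure the handle-slide diffeomorphism genuinely exists on $\mathcal{N}$ and sends $S^0_a$ to a sphere homologous to $[S^0_a]+[S^0_b]$ on the boundary. If that becomes messy, I would fall back on a direct construction: the spheres $S_a,S_b$ lie in $\partial\mathcal{N}$, so one can build explicit dual disks in $\mathcal{N}$ from the $\pi_1$ dual-graph description, and recognize $\mathcal{N}$ as $(S_a\times D^2)\natural(S_b\times D^2)$. That is the approach I would try if the generator argument stalls.
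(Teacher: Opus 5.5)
Your proposal is correct and follows essentially the paper's route: you reduce, via the cut-system classification in Remark \ref{classification_spine_remark}, to realizing generators of $\mathrm{GL}(2,\Integral)$ by orientation-preserving diffeomorphisms of $\mathcal{N}$, and you transport a reference spine; the factor reflections and the summand swap are exactly the paper's $\sigma_1,\sigma_2,\sigma_3$. The only difference is the transvection: you use a $0$--framed handle slide, whereas the paper views $\mathcal{N}$ as a three-holed $3$--sphere times $[-1,1]$, replaces one bell by a copy of the outer boundary sphere to get $\sigma_4,\sigma_5$, and converts these new spines into diffeomorphisms using Lemma \ref{classification_spine_basic}; both work, provided that in your slide you choose a trivial band, since the framing and linking-number count only shows that the new component is $0$--framed, not that the new link is an unlink.
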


\begin{proof}
	In view of Remark \ref{classification_spine_remark} and Lemma \ref{classification_spine_basic},
	it suffices to show that every basis of $H_2(\mathcal{N};\Integral)$ 
	occurs as $[P_a],[P_b]$ for some barbell spine $\Gamma=P_a\cup I\cup P_b$ of $\mathcal{N}$.
	To this end, we observe some basic transformations for barbell spines 
	under the action of $\mathrm{Diffeo}^+(\mathcal{N})$, as follows.
	Fix a reference barbell spine  of $\mathcal{N}$, denoted as.
	$$\hat{\Gamma}=\hat{P}_a\cup\hat{I}\cup \hat{P}_b.$$
	Denote the corresponding basis of $H_2(\mathcal{N};\Integral)\cong\Integral^2$ as 
	$$(\alpha,\beta)=([P_a],[P_b]).$$
	
	The smooth $4$--manifold $\mathcal{N}$ can be viewed as a boundary-connect sum
	$$\mathcal{N}=\left(\hat{P}_a\times D^2\right)\natural\left(\hat{P}_b\times D^2\right).$$
	Since each summand $\hat{P}_{a/b}\times D^2$ admits 
	an obvious orientation-preserving involution,
	as realized by the product of reflections on the factors, 
	we observe that the following basis transformations
	can be realized by orientation-preserving diffeomorphisms of $\mathcal{N}$.
	$$\sigma_1(\alpha,\beta)=(\alpha,\beta)\cdot\left[\begin{array}{cc}-1&0\\0&1\end{array}\right],
	\mbox{ and }\sigma_2(\alpha,\beta)=(\alpha,\beta)\cdot\left[\begin{array}{cc}1&0\\0&-1\end{array}\right].$$
	We can also switch the summands in $(S^2\times D^2)\#(S^2\times D^2)$
	by an orientation-preserving involution, realizing the following basis transformation.
	$$\sigma_3(\alpha,\beta)=(\alpha,\beta)\cdot\left[\begin{array}{cc}0&1\\1&0\end{array}\right].$$
	
	Viewing in another way, we may identify $\mathcal{N}$ alternatively as 
	$$\mathcal{N}=\left(D^3\setminus\mathrm{int}(Q_a\sqcup Q_b)\right)\times[-1,1],$$
	with corner smoothing,
	where $Q_a,Q_b$ are smoothly embedded, mutually disjoint $3$--disks in $D^3$.
	Because of Lemma \ref{classification_spine_basic}, 
	we can assume the bells of $\hat{\Gamma}$ identified as
	oriented, parallel nearby copies $\hat{P}_a$ and $\hat{P}_b$ 
	of $-\partial Q_a$ and $-\partial Q_b$, respectively, 
	in the $3$--dimensional slice $(\mathrm{int}(D^3)\setminus(Q_a\sqcup Q_b))\times 0$,
	and identify the bar $\hat{I}$ as sitting in the same slice.
	Since $D^3\setminus\mathrm{int}(Q_a\sqcup Q_b)$ is diffeomorphic
	to a $3$--sphere with three holes,
	we may replace one of the bells 
	$\hat{P}_b$ or $\hat{P}_a$ with a parallel copy of $\partial D^3$,
	and replace the bar $\hat{I}$ with some other bar connecting to that copy.
	By symmetry, this will give rise to other obvious barbell spines, which can be realized 
	as transformations of $\hat{\Gamma}$ under the action of $\mathrm{Diffeo}^+(\mathcal{N})$
	(Lemma \ref{classification_spine_basic}).
	This allows us to realize the following basis transformations. 
	$$\sigma_4(\alpha,\beta)=(\alpha,\beta)\cdot\left[\begin{array}{cc}1&-1\\0&-1\end{array}\right],
	\mbox{ and }
	\sigma_5(\alpha,\beta)=(\alpha,\beta)\cdot\left[\begin{array}{cc}-1&0\\-1&1\end{array}\right].$$
	
	Identifiying $\mathrm{GL}(H_2(\mathcal{N};\Integral))$ as $\mathrm{GL}(2,\Integral)$
	with respect to the reference basis $(\alpha,\beta)$,
	it is well-known that the composite transformations $\sigma_4\sigma_2$ and $\sigma_5\sigma_1$
	generate the special linear subgroup $\mathrm{SL}(2,\Integral)$,
	and together with $\sigma_3$, the transformations generate $\mathrm{GL}(2,\Integral)$.
	From these constructions, we see that every basis of $H_2(\mathcal{N};\Integral)$
	can be realized by some barbell spine of $\mathcal{N}$, as desired.
\end{proof}

With the above preparations, we finish the proof of Theorem \ref{classification_spine} as follows.

Let $\mathcal{N}$ be a smooth $4$--manifold diffeomorphic to $(S^2\times D^2)^{\natural 2}$.
For any barbell spine $\Gamma=P_a\cup I\cup P_b$ of $\mathcal{N}$,
there exists some ordered pair $S_a\sqcup S_b$ of 
smoothly embedded, mutually disjoint, unionwise nonseparating, oriented $2$--spheres in $\partial \mathcal{N}$,
such that $P_a\sqcup P_b$ is smoothly isotopic to $S_a\sqcup S_b$ in $\mathcal{N}$,
preserving labling and the bell orientations, by Lemma \ref{classification_spine_basic}.
The assignment $\Gamma\mapsto S_a\sqcup S_b$ is well-defined up to the asserted equivalence relations,
by Lemma \ref{classification_spine_injective}.
The injectivity of this assignment also follows from Lemma \ref{classification_spine_injective} (and Remark \ref{classification_spine_remark}).
The surjectivity of this assignment follows from Lemma \ref{classification_spine_surjective}.
Therefore, the assignment determines a bijective correspondence 
between the asserted objects up to the asserted equivalence relations.

This completes the proof of Theorem \ref{classification_spine}.

\bibliographystyle{amsalpha}

\end{document}